\def\I{{\rm{I\!\!I}}}
\def\leftB{[\![}
\def\rightB{]\!]}
\def\F{{\mathcal{ F}}}
\def\R{{\mathbb{R}} }
\def\N{{\mathbb{N}} }
\def\E{{\mathbb{E}}  }
\def\P{{\mathbb{P}}  }
\def\I{{\mathbb{I}}}
\def\det{{\rm{det}}}
\def\Tr{{\rm{Tr}}}
\def\bint#1^#2{\displaystyle{\int_{#1}^{#2}}}
\def\bsum#1^#2{\displaystyle{\sum_{#1}^{#2}}}
\def\supp{{\rm{supp}}}
\def\xdt_#1{X_#1(\Delta t)}
\newtheorem{THM}{Theorem}
\newtheorem{PROP}{Proposition}
\newtheorem{LEMME}{Lemma}
\newtheorem{REM}{Remark}
\newcommand{\mysection}{\setcounter{equation}{0} \section}
\newcommand \A[1]{{\bf (#1)}}
\begin{document}
\title{Weak Error for the Euler Scheme Approximation of Diffusions with non-smooth coefficients}\thanks{The article was prepared within the framework of a subsidy granted to the HSE by the Government of the Russian Federation for the implementation of the Global Competitiveness Program.
}
\author{V. Konakov}\address{Higher School of Economics, Shabolovka 28, Building 1, Moscow, Russian Federation. vkonakov@hse.ru}
\author{S. Menozzi}\address{Higher School of Economics, Shabolovka 28, Building 1, Moscow, Russian Federation and  LaMME, UMR CNRS 8070, Universit\'e d'Evry Val d'Essonne, 23 Boulevard de France, 91037 Evry, France. stephane.menozzi@univ-evry.fr}
\date{\today}
\begin{abstract} 
We study the weak error associated with the Euler scheme of  non degenerate diffusion processes with non smooth bounded coefficients. Namely, we consider the cases of H\"older continuous coefficients as well as piecewise smooth drifts with smooth diffusion matrices.
\end{abstract}
%
%
\subjclass{Primary 60H10; Secondary 65C30}
\keywords{Diffusion Processes, Euler Scheme, Parametrix, H\"older Coefficients, bounded drifts. 
}
\maketitle

\mysection{Introduction}
\subsection{Setting.}
Let $T>0$ be a fixed given deterministic final horizon and $x\in \R^d$ be an initial starting point. We consider the following multidimensional SDE:
\begin{align}
& X_t=x+\int_0^t b(s,X_s)ds+\int_0^t\sigma (s,X_s)dW_s,\ t\in [ 0,T],
\label{1} 
\end{align}
where the coefficients $b:[0,T]\times \mathbb{R}^{d}\rightarrow \mathbb{R}^{d},\  \sigma:[0,T]\times \mathbb{R}^{d}\rightarrow \R^d\otimes \R^d$ are bounded  measurable in time and  space and 
$W $ is a Brownian motion on some filtered probability space $(\Omega,\F,(\F_t)_{t\ge 0},\P) $. 
We assume that the diffusion matrix $a(t,x):=\sigma\sigma^*(t,x)$ is uniformly elliptic and at least H\"older continuous in time and space. We will consider two kinds of assumptions for the drift coefficient $b$: either H\"older continuous in time and space (as for the diffusion matrix), or piecewise smooth and having at most a finite set of spatial discontinuities.
These assumptions guarantee that \eqref{1} admits a unique weak solution, see e.g. Bass and Perkins \cite{bass:perk:09}, \cite{meno:10} from which the uniqueness to the martingale problem for the associated generator can be derived under the current assumptions.

Define now for a given $N\in \N^* $ the time step $h:=T/N$ and set for all $i\in \leftB 1,N \rightB,\ t_i:=ih $ where from now on the notation $\leftB \cdot, \cdot\rightB $ is used to denote an interval of integers.
Consider the continuous Euler scheme associated with \eqref{1} whose dynamics writes $X_0^h=x $ and for all $t\in [0,T] $:
\begin{equation}
\label{EULER}
X_t^h=x+\int_0 ^t b(\phi(u),X_{\phi(u)}^h)du+\int_0 ^t \sigma(\phi(u),X_{\phi(u)}^h)dW_u,
\end{equation}
where we set $\phi(u)=\inf\{ (t_i)_{i\in \leftB 0,N-1\rightB}: t_i\le u<t_{i+1}\}$.

A useful quantity to study, arising in many applicative fields from physics to finance, is the so-called weak error which for a suitable real valued test function $f$ writes:
\begin{equation}
\label{WEAK}
d(f,x,T,h):=\E[f(X_T^{h,0,x})]-\E[f(X_T^{0,x})], 
\end{equation}
using the usual Markovian notations, i.e. $X_{T}^{h,0,x}, X_{T}^{0,x}$ respectively stand for the Euler scheme and the diffusion at time $T$ which start at point $x$ at $0$.

There is a huge literature concerning the weak error for smooth and/or non-degenerate coefficients, from the seminal paper of Talay and Tubaro \cite{tala:tuba:90}, to the extensions to the hypoelliptic framework \cite{ball:tala:96:1}. Under those conditions, the quantity $d(f,x,T,h)$ is of order $h$ corresponding to the magnitude of the time step.
In the non degenerate framework (under some uniform ellipticity or hypoellipticity conditions) it is even possible to take $f$ to be a Dirac mass in the above expression \eqref{WEAK}. The associated convergence rate remains of order $h$ for the Euler scheme, see \cite{kona:mamm:02}
\cite{ball:tala:96:2} and  $h^{1/2} $ in the more general case of 
  Markov Chain approximations, see e.g \cite{kona:mamm:00} in which the Brownian increments appearing in \eqref{EULER} are replaced by i.i.d. sequences $(\xi_i)_{i\ge 1} $ that are not necessarily Gaussian. In the framework of Lipschitz coefficients we can also mention, in the scalar case, the recent work of Alfonsi \textit{et al.} \cite{alfo:jour:koha:14}, who obtained bounds on the Wasserstein distances between the laws of the paths of the diffusion and its Euler scheme. 
Anyhow, the case of non smooth coefficients, H\"older continuous or less, has rarely been considered. Such cases might anyhow appear very naturally in many applications, when the drifts have for instance discontinuities at some given interfaces or when the diffusion coefficients are very irregular (random media).

In the framework of bounded non degenerate and H\"older continuous coefficients, let us mention 
the work of Mikulevi\v{c}ius and Platen \cite{miku:plat:91} who obtained bounds for the weak error in \eqref{WEAK} at rate $h^{\gamma/2} $ where $\gamma\in (0,1) $ is the H\"older exponent of the coefficients $b,\sigma$ in \eqref{1} provided $f\in C_b^{2+\gamma}(\R^d,\R)$ (space of bounded functions with bounded derivatives up to order two and $\gamma $-H\"older continuous second derivatives). This regularity is essential in that work to apply It\^o's formula. 
Our approach permits to establish that this bound holds true, up to an additional slowly varying factor in the exponent, for the difference of the densities itself,
 which again corresponds to the weak error \eqref{WEAK} for a $\delta $-function. We also mention the recent work of Mikulevi\v{c}ius \textit{et al.} \cite{miku:12}, \cite{miku:zhan:15}, concerning some extensions of \cite{miku:plat:91} to jump-driven SDEs with H\"older coefficients.

Finally, concerning numerical schemes for diffusions with non-regular coefficients, we refer to the recent work of Kohatsu-Higa \textit{et al.} \cite{koha:leja:yasu:15} who investigate the weak error for possibly discontinuous drifts and diffusion coefficients that are just continuous. We are able to extend some of their controls to densities. Indeed, in the quoted work, the authors investigate \eqref{WEAK} for functions $f$ that are at least continuous. We again have an additional slowly varying factor in the exponent which is due to our smoothing approach.

Our strategy is the following. Under the previous assumptions (stated after \eqref{1}), both processes $(X_t)_{t\in (0,T]}$ in \eqref{1} and $(X_{t_i}^h)_{i\in \leftB 1,N\rightB} $ in \eqref{EULER} have densities, see e.g. \cite{kona:kozh:meno:15:1} for the continuous process and Lemaire and Menozzi \cite{lema:meno:10} for the scheme. Let us denote them respectively for $x\in \R^d$, $0\le i<j\le N $, by $p(t_i,t_j,x,.) $ and $p^h(t_i,t_j,x,.) $ for the processes starting at time $t_i$ from point $x$ and considered at time $t_j$. To study the error $(p-p^h)(t_i,t_j,x,y) $ we introduce perturbed dynamics associated with \eqref{1} and \eqref{EULER} respectively. Namely, for a small parameter $\varepsilon$, we mollify suitably the coefficients, the mollification procedure is described in its whole generality in Section \ref{SEC_MOL} and depends on the two considered sets of assumptions indicated above, and consider two additional processes with dynamics:
\begin{equation}
\label{PERT_DYN}
\begin{split}
X_t^{(\varepsilon)}=x+\int_0^t b_\varepsilon(s,X_s^{(\varepsilon)})ds+\int_0^t \sigma_\varepsilon(s,X_s^{(\varepsilon)})dW_s,\\
X_0^{h,(\varepsilon)}=x,\ X_{t_{i+1}}^{h,(\varepsilon)}=X_{t_i}^{h,(\varepsilon)}+b_\varepsilon(t_i,X_{t_i}^{h,(\varepsilon)})h+
\sigma_\varepsilon(t_i,X_{t_i}^{h,(\varepsilon)})(W_{t_{i+1}}-W_{t_i}),\end{split}
\end{equation}
where $b_\varepsilon,\sigma_\varepsilon $ are mollified versions of $b,\sigma$. It is clear that both $(X_t^{(\varepsilon)})_{t\in (0,T]} $ and $(X_{t_i}^{h,(\varepsilon)})_{i\in \leftB 1,N\rightB}  $ have densities. The mollified coefficients indeed satisfy uniformly in the mollification parameter the previous assumptions. Let us denote those densities for $x\in \R^d, 0\le t_i<t_j\le T $ by $p_{\varepsilon}(t_i,t_j,x,.),\ p_{\varepsilon}^h(t_i,t_j,x,.)  $ respectively. 

The idea is now to decompose the global error as:
\begin{equation}
\label{DECOMP_ERROR}
\begin{split}
(p-p^h)(t_i,t_j,x,y)=(p-p_\varepsilon)(t_i,t_j,x,y)+(p_\varepsilon-p_{\varepsilon}^h)(t_i,t_j,x,y)+(p_\varepsilon^{h}-p^h)(t_i,t_j,x,y).
\end{split}
\end{equation}
The key point is that the stability of the densities with respect to a perturbation has been thoroughly investigated for diffusions and Markov Chains in Konakov \textit{et al.} \cite{kona:kozh:meno:15:1}. The results of that work allow to control the differences $p-p_\varepsilon $, $p_\varepsilon^h-p^h $. On the other hand, since the coefficients $b_\varepsilon,\sigma_\varepsilon $ of $(X_t^{(\varepsilon)})_{t\in [0,T]} $, $(X_{t_i}^{h,(\varepsilon)})_{i\in \leftB 0,N \rightB}  $ are smooth the central term $p_\varepsilon-p_{\varepsilon}^h $ in \eqref{DECOMP_ERROR} can be investigated thanks to the work of Konakov and Mammen \cite{kona:mamm:02} giving the error expansion at order $h$ on the densities for the weak error. The key point is that the coefficients in the expansion depend on the derivatives of $b_\varepsilon,\sigma_\varepsilon $ which explode when $\varepsilon $ goes to zero. This last condition is natural in order to control $p-p_\varepsilon,p_\varepsilon^h-p^h $. Thus, two contributions need to be equilibrated to derive the global error bounds. This will be done through a careful analysis of the densities (heat kernel) of the processes with dynamics described in  \eqref{1}, \eqref{EULER}, \eqref{PERT_DYN}. The estimates required for the error analysis will lead us to refine some bounds previously established by Il'in \textit{et al} \cite{ilin:kala:olei:62}. Let us indicate that this perturbative approach had also been considered by Kohatsu-Higa \textit{et al.} \cite{koha:leja:yasu:15} but for the weak error \eqref{WEAK} involving at least a continuous function. Our approach, based on parametrix techniques, allows to handle directly the difference of the densities, and gives, up to an additional factor going to zero with the time step, the \textit{expected} convergence rates.


\subsection{Assumptions and Main Results.}

Let us introduce the following assumptions. 
\begin{trivlist}
\item[(\textbf{A1)}] \textbf{(Boundedness of the coefficients)}. The components of the vector-valued function $b(t,x)$ and the
matrix-valued function $\sigma(t,x)$ are bounded measurable. Specifically, there exist constants $K_1,K_2>0$  s.t.
\begin{eqnarray*}
\sup_{(t,x)\in [0,T]\times \R^d}|b(t,x)|\le K_1,\
\sup_{(t,x)\in [0,T]\times \R^d}|\sigma(t,x)|\le K_2.
\end{eqnarray*}
\item[(\textbf{A2})] \textbf{(Uniform Ellipticity)}. The diffusion matrix $a:=\sigma\sigma^*$ is uniformly elliptic, i.e. there exists $\Lambda \ge 1,\ \forall (t,x,\xi)\in [0,T]\times (\R^d)^2$,
\begin{eqnarray*}
\Lambda^{-1} |\xi|^2 
\le \langle a(t,x)\xi,\xi\rangle \le \Lambda |\xi|^2.
\end{eqnarray*}
\end{trivlist}
We consider two types of \textit{smoothness} assumptions for the coefficients $b,\sigma$ in \eqref{1}.
\begin{trivlist}
\item[\textbf{(H)}] \textbf{(H\"older drift and diffusion coefficient)}. The drift $b$ and the diffusion coefficient $\sigma $ are  time-space H\"older continuous in the following sense: for some $ \gamma \in (0,1]$ , $\kappa<+\infty $, for all $(s,t)\in [0,T]^2, (x,y)\in (\R^d)^2$,
\begin{eqnarray*}
\left\vert \sigma(s,x)-\sigma(t,y)\right\vert+\left\vert b(s,x)-b(t,y)\right\vert 
\leq 
\kappa\{ |s-t|^{\gamma/2}+\left\vert x-y\right\vert ^{\gamma }\}.
\end{eqnarray*}
Observe that the last condition also readily gives, thanks to the boundedness of $\sigma$, that the diffusion matrix $a=\sigma\sigma^*$ enjoys the same H\"older property.\\

\item[\textbf{(PS)}] \textbf{(Piecewise smooth drift and Smooth diffusion coefficient)}. The drift $b$ is piecewise smooth with bounded derivatives outside of the discontinuity sets. Precisely,  $b\in C_b^{2,4}([0,T]\times (\R^d\backslash {\mathcal I}),\R^d)$ where the set of possible discontinuities ${\mathcal I}$ writes as $ {\mathcal I}:=\cup_{i=1}^{m}{\mathcal S}_i$,  $m\in \N $. Here, for all $i\in \leftB 1,m\rightB $, ${\mathcal S}_i$ is a smooth bounded submanifold of $\R^d$ (at least $C^4$) of dimension lower or equal to $d-1$, i.e.
${\mathcal S}_i:=\{ x\in \R^d: g_i(x)=0\} $ for a corresponding smooth function $g_i$. We also assume that the $({\mathcal S}_i)_{i\in \leftB 1,m\rightB} $ do not intersect: for all $1\le i<j\le m,\ {\mathcal S}_j\cap {\mathcal S}_i=\emptyset $. 

On the other hand we assume that the diffusion coefficient $\sigma $ is \textbf{globally} $C_b^{2,4}([0,T]\times \R^d,\R^d\otimes \R^d) $.
\end{trivlist}

We emphasize that, with the above definition, the discontinuity set of $b$ only depends on the spatial variable. 
A time-dependent discontinuity set could \textit{a priori} also be considered provided each of it components is the boundary of a smooth time-space domain. Namely, considering for $i\in\leftB 1,m\rightB, t\in [0,T], {\mathcal S}_i(t):=\{x\in \R^d: g_i(t,x)=0\} $, the smooth spatial submanifolds $ {\mathcal S}_i(t)$ should as well evolve smoothly in time.
We consider the case introduced in \textbf{(A${}_{PS}$)} for simplicity.

From now on, we always assume conditions \textbf{(A1)}-\textbf{(A2)} to be in force. We say that assumption \textbf{(A${}_{H}$)} (resp. \textbf{(A${}_{PS}$)}) holds if additionally the coefficients satisfy \textbf{(H)} (resp. \A{PS}).  We will write that \textbf{(A)} holds whenever \textbf{(A${}_{H} $)} or \textbf{(A${}_{PS} $)} is satisfied.

We will denote, from now on, by $C$ a constant depending on the parameters appearing in \textbf{(A)} and $T$. We reserve the notation $c$ for constants that only depend on \textbf{(A)} but not on $T$. The values of $C,c$ may change from line to line. Other possible dependencies will be explicitly specified.

\begin{THM}[Error for the Euler scheme of a diffusion with H\"older coefficients]
\label{THM_HOLDER_ES}
Let $T>0$ be fixed and consider a given time step $h:=T/N$, for $N\in\N^*  $. Set for $i\in \N,\ t_i:=ih $.
Under (\textbf{A${}_{H} $}), there exist $C\ge 1,c\in (0,1]$ s.t. for all $0\le t_i<t_j\le T$ s.t. $(t_j-t_i)\ge h^{1/(2-\gamma)}$and $(x,y)\in (\R^d)^2$:
\begin{equation}
\label{ERR_EUL}
p_c(t_j-t_i,y-x)^{-1}|(p-p^h)(t_i,t_j,x,y)|\le \frac{C}{(t_j-t_i)^{(1-\gamma)\gamma/2}}h^{\frac{\gamma}{2}-C\psi(h)},
\end{equation}
where $p,p^h $ respectively stand for the densities of the diffusion $X$ and its Euler approximation $X^h $ with time step $h$, for all $(t,z)\in \R_*^{+}\times \R^d$, $ p_c(t,z):=\frac{c^{d/2}}{(2\pi t)^{d/2}}\exp(-c\frac{|z|^2}{2t}) $ and $\psi(h)=\frac{\log_3(h^{-1})}{\log_2(h^{-1})} $ where $\log_k $ denotes for $k\in \N^* $ the $k^{{\rm th}} $ iterated logarithm. Let us observe that $\psi(h)\underset{h\rightarrow 0}{\longrightarrow }0 $.
If we are now interested in the \textit{weak error} in the sense of \eqref{WEAK}, 
for a function $f\in C^\beta(\R^d,\R)$ (uniformly $\beta $-H\"older continuous functions), $ \beta\in (0,1]$:
\begin{equation}
\label{WEAK_HOLDER}
|\E[f(X_{t_j}^{h,t_i,x})]-\E[f(X_{t_j}^{t_i,x})]|\le C_f h^{\gamma/2},
\end{equation}
using again the usual Markovian notations, i.e. $X_{t_j}^{h,t_i,x}, X_{t_j}^{t_i,x}$ respectively stand for the Euler scheme and the diffusion at time $t_j$ which start at point $x$ at $t_i$.

Eventually, if we consider a smooth domain $A\subset \R^d$ (i.e. a connected open set at least $C^2$) with bounded boundary and non zero Lebesgue measure, 
 we also get that for all $x\in\R^d$ s.t. $d(x,\partial A)\ge (t_j-t_i)^{1/2}h^{\gamma/2}$:
\begin{equation}
\label{HOLDER_BOREL} 
|\E[\I_{X_{t_j}^{h,t_i,x}\in A}]-\E[\I_{X_{t_j}^{t_i,x}\in A}]|\le C\Big\{\frac{1}{\gamma d(x,\partial A)^\gamma}\I_{d(x,\partial A)\ge \exp(-\frac 1\gamma)}+ |\ln (d(x,\partial A))|\I_{d(x,\partial A)< \exp(-\frac 1\gamma)}+1\Big\} h^{\gamma/2},
\end{equation}
where $d(.,\partial A)$ stands for the distance to the boundary of $A$.
\end{THM}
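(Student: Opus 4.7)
The plan is to prove the three displayed inequalities in succession, all resting on the decomposition \eqref{DECOMP_ERROR} and on an optimization of the mollification scale $\varepsilon>0$. For \eqref{ERR_EUL} I would invoke the mollification of Section \ref{SEC_MOL}, which under \textbf{(A${}_H$)} yields $\|b-b_\varepsilon\|_\infty+\|\sigma-\sigma_\varepsilon\|_\infty \le C\varepsilon^\gamma$ and $\|\nabla_x^k b_\varepsilon\|_\infty+\|\nabla_x^k \sigma_\varepsilon\|_\infty \le C\varepsilon^{\gamma-k}$ for $k\ge 1$, while preserving \textbf{(A1)}-\textbf{(A2)} uniformly in $\varepsilon$. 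The stability result of Konakov \textit{et al.} \cite{kona:kozh:meno:15:1} controls the first and third contributions of \eqref{DECOMP_ERROR} by $C\varepsilon^\gamma p_c(t_j-t_i,y-x)$, up to a loss $h^{-\psi(h)}$ generated by the iterated parametrix series (whose depth must grow like $\log(h^{-1})$). For the central term, the smoothness of $b_\varepsilon,\sigma_\varepsilon$ permits the expansion of Konakov and Mammen \cite{kona:mamm:02}, whose leading correction at order $h$ involves the second-order spatial derivatives of $a_\varepsilon:=\sigma_\varepsilon\sigma_\varepsilon^*$ and $b_\varepsilon$, and produces a bound of order $Ch\varepsilon^{\gamma-2}(t_j-t_i)^{-(1-\gamma)\gamma/2}p_c(t_j-t_i,y-x)$. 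Balancing $\varepsilon^\gamma$ against $h\varepsilon^{\gamma-2}$ yields the optimal choice $\varepsilon\sim \sqrt h$, which forces the condition $t_j-t_i\ge h^{1/(2-\gamma)}$ and gives the rate $h^{\gamma/2-C\psi(h)}$ of \eqref{ERR_EUL}.

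For the weak error \eqref{WEAK_HOLDER}, direct integration of \eqref{ERR_EUL} against $f\in C^\beta$ would still carry the factor $h^{-C\psi(h)}$, so I would remove it via the backward Kolmogorov equation: setting $u(s,z):=\E[f(X_{t_j}^{s,z})]$, It\^o's formula applied to $u(\cdot,X^h_\cdot)$ on $[t_i,t_j]$ combined with $\partial_s u+Lu=0$ gives
\begin{equation*}
\E[f(X_{t_j}^{h,t_i,x})]-u(t_i,x)=\E\int_{t_i}^{t_j}\bigl(L_{\phi(s),X_{\phi(s)}^h}-L_{s,X_s^h}\bigr)u(s,X_s^h)\,ds,
\end{equation*}
where $L_{r,z}$ denotes the generator of $X$ with coefficients frozen at $(r,z)$. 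Under \textbf{(A${}_H$)}, interior Schauder estimates for this Cauchy problem yield $\|\nabla^2 u(s,\cdot)\|_\infty \le C\|f\|_{C^\beta}(t_j-s)^{-(2-\beta)/2}$; combining with $\E|X^h_s-X^h_{\phi(s)}|^\gamma \le Ch^{\gamma/2}$ and the $\gamma$-H\"older regularity of $a,b$, the integrand is bounded by $Ch^{\gamma/2}\|f\|_{C^\beta}(t_j-s)^{-(2-\beta)/2}$. Since $(2-\beta)/2<1$ for $\beta>0$, the time integral is finite and yields \eqref{WEAK_HOLDER}.

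For \eqref{HOLDER_BOREL} I would approximate $\I_A$ by a smooth cutoff $\chi_\delta$ equal to $\I_A$ outside the tubular neighborhood $N_\delta(\partial A)$ with $\|\chi_\delta\|_{C^\gamma}\le C\delta^{-\gamma}$, and write
\begin{equation*}
\begin{split}
\bigl|\E[\I_A(X_{t_j}^{h,t_i,x})-\I_A(X_{t_j}^{t_i,x})]\bigr| &\le \bigl|\E[\chi_\delta(X_{t_j}^{h,t_i,x})-\chi_\delta(X_{t_j}^{t_i,x})]\bigr| \\
&\quad +\P\bigl(X_{t_j}^{h,t_i,x}\in N_\delta(\partial A)\bigr)+\P\bigl(X_{t_j}^{t_i,x}\in N_\delta(\partial A)\bigr).
\end{split}
\end{equation*}
The first contribution is bounded by $C\delta^{-\gamma}h^{\gamma/2}$ via \eqref{WEAK_HOLDER}; the last two, by the Gaussian upper bounds on the densities of $X,X^h$ combined with the $(d-1)$-dimensional structure of $\partial A$, are of order $C\delta(t_j-t_i)^{-1/2}\exp(-cd(x,\partial A)^2/(t_j-t_i))$ as soon as $\delta\le d(x,\partial A)/2$. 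The final estimate then follows by a dyadic optimization in $\delta$ over the range $[\sqrt{t_j-t_i}h^{\gamma/2},d(x,\partial A)/2]$: the $d(x,\partial A)^{-\gamma}$ contribution arises when one can take $\delta\sim d(x,\partial A)$, while the logarithmic term $|\ln d(x,\partial A)|$ is produced by summing geometric contributions across dyadic scales when $d(x,\partial A)$ is close to the lower threshold.

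The main technical obstacle is the first step: establishing the sharp $\varepsilon$-dependence in the central expansion $p_\varepsilon-p_\varepsilon^h$ requires revisiting the Il'in-Kalashnikov-Oleinik \cite{ilin:kala:olei:62} bounds on derivatives of heat kernels and tracking carefully how the $\varepsilon^{\gamma-k}$ blow-up of the derivatives of $b_\varepsilon,a_\varepsilon$ interacts with each iteration of the parametrix while keeping the Gaussian envelope intact. The resulting $h^{-C\psi(h)}$ loss in \eqref{ERR_EUL} is intrinsic to this iteration; it disappears in \eqref{WEAK_HOLDER} because the PDE approach bypasses the iterated parametrix, and is harmless in \eqref{HOLDER_BOREL} because $\delta^{-\gamma}$ already dominates at the optimal scale.
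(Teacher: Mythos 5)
Your treatment of \eqref{ERR_EUL} and \eqref{WEAK_HOLDER} follows the paper's route (decomposition \eqref{DECOMP_ERROR} plus balancing for the densities; backward PDE plus singular derivative bounds $|D_x^2 v(s,\cdot)|\le C(t_j-s)^{-(2-\beta)/2}$ for the weak error), and the hard part you defer — the $\varepsilon$-dependence of $p_\varepsilon-p_\varepsilon^h$ — is indeed where the paper spends Propositions \ref{THE_FINAL_PROP} and \ref{PROP_C_D}. One imprecision worth flagging: the factor $h^{-C\psi(h)}$ does not come from truncating the parametrix series at depth $\log(h^{-1})$. It comes from the fact that the sensitivity theorem measures $\sigma-\sigma_\varepsilon$ in a $C^\eta$ norm, which costs $\varepsilon^{-\eta}$, with a constant $C_\eta\sim\exp(C(2\eta^{-1}+1)^{2\eta^{-1}+1})$ from summing the full perturbed series; the exponent $\psi(h)=\log_3(h^{-1})/\log_2(h^{-1})$ is exactly what makes the choice $\eta=\eta(h)$ balance $\varepsilon^{-\eta}$ against $C_\eta$. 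Without articulating that trade-off you cannot recover the specific form of $\psi$. Likewise the singularity $(t_j-t_i)^{-(1-\gamma)\gamma/2}$ arises because the time factor enters the balance ($\varepsilon\sim(h/(t_j-t_i)^{1-\gamma})^{1/2}$, not $\varepsilon\sim\sqrt h$).

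The genuine gap is in \eqref{HOLDER_BOREL}. Feeding the global bound $\|\chi_\delta\|_{C^\gamma}\le C\delta^{-\gamma}$ into \eqref{WEAK_HOLDER} gives $|T_2^\delta|\le C\delta^{-\gamma}h^{\gamma/2}$, while the boundary-layer terms require $\delta\le(t_j-t_i)^{1/2}h^{\gamma/2}$ to be $O(h^{\gamma/2})$ (the Gaussian factor $\exp(-cd(x,\partial A)^2/(t_j-t_i))$ is of order one under the hypothesis $d(x,\partial A)\ge(t_j-t_i)^{1/2}h^{\gamma/2}$, so it cannot be used to enlarge $\delta$). These two constraints are incompatible: at $\delta=(t_j-t_i)^{1/2}h^{\gamma/2}$ your $T_2^\delta$ bound is $C(t_j-t_i)^{-\gamma/2}h^{\gamma(1-\gamma)/2}$, which degrades the rate in $h$, whereas at $\delta\sim d(x,\partial A)$ the tube probabilities are only $O(1)$; no dyadic summation over intermediate scales repairs this, since each scale inherits one of the two failures. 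The paper closes the gap differently: because $f_\delta$ is constant outside the $\delta$-tube, the derivative bound can be localized to $|D_x^\alpha v_\delta(t,x)|\le C(\delta\vee d(x,\partial A))^{-\eta}(t_j-t)^{-(|\alpha|-\eta)/2}$ (equation \eqref{CTR_GRAD_DELTA}), this local constant is propagated along the Euler path via $\E[(\delta\vee d(X_s^h,\partial A))^{-2\eta}]\le C((\delta\vee d(x,\partial A))^{-2\eta}+1)$, and the final dichotomy $\gamma^{-1}d(x,\partial A)^{-\gamma}$ versus $|\ln d(x,\partial A)|$ is obtained by optimizing over the H\"older index $\eta\in(0,\gamma]$ in $1/(\eta d(x,\partial A)^{\eta})$ — not over $\delta$, which stays fixed at $(t_j-t_i)^{1/2}h^{\gamma/2}$. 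You would need both of these ingredients to complete your argument.
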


\begin{REM}\label{REM_INT_HOLDER}
We point out that this result is to be compared with the one
obtained by
Mikulevi\v{c}ius and Platen \cite{miku:plat:91} for the weak error. The framework they considered is similar to ours, and their main results consists in controlling at rate $h^{\gamma/2} $ the \textit{weak} error $d(f,x,T,h)=\E[f(X_T^{h,0,x})]-\E[f(X_T^{0,x})] $ for a \textit{smooth} function $f\in C_b^{2+\gamma}(\R^d,\R)$ (space of bounded functions, with bounded derivatives up to order two and $\gamma $-H\"older continuous second derivatives). The above theorem establishes that $|d(f,x,T,h)|\le Ch^{\gamma/2-C\psi(h)} $ as soon as $f$ is measurable and satisfies the growth condition 
\begin{equation}
\label{Growth}
\exists c_0<c/(2T),C_0>0,\forall x\in \R^d,\ |f(x)|\le C_0
\exp(c_0|x|^2)
.
\end{equation}
This control can be useful for specific and relevant applications, like for instance quantile estimation (that would involve functions of the form $f(x)=\I_{|x|\le K}$ or $f(x)=\I_{|x|\le K}\exp(c|x|) $) that appear in many applications: default probabilities in mathematical finance, fatigue of structures in random mechanics. We are able to find the expected convergence rate up to a vanishing contribution. The rate $h^{\gamma/2} $ again holds, without the additional term, as soon as $f\in C^{\beta}(\R^d,\R),\ \beta \in (0,1]$. Some extensions to unbounded functions $f$ satisfying the growth condition \eqref{Growth} are described in Remark \ref{RQ_UNBOUNDED} of Section \ref{ERR_FAIBLE_INT}.  

The contribution in $ \psi(h)$ appearing in \eqref{ERR_EUL}, which slightly deteriorates the convergence, seems to be, with our approach, the price to pay to get rid of any smoothness on $f$. Observe anyhow that for indicator functions of smooth Borel sets, equation \eqref{HOLDER_BOREL} provides a better result than \eqref{ERR_EUL} as soon as the initial distance to the boundary satisfies $d(x,\partial A)\ge (t_j-t_i)^{1/2}h^{\gamma/2} $ (see Section \ref{SUBSEC_HODLER_BOREL} for details). Observe that this control improves in that case what could be derived from \cite{koha:leja:yasu:15} in which continuous test functions are considered.
\end{REM}

\begin{REM}[About the Convergence Rate]
We also emphasize that the convergence rate in $h^{\gamma/2} $ is closer to a rate associated with a \textit{strong error}. It indeed corresponds to the typical magnitude of the quantity
$\E[|W_h|^\gamma]\le c_\gamma h^{\gamma/2} $, which reflects the variations, on one time-step of length $h$, of the Euler scheme with H\"older coefficients. Indeed, under \A{A${}_H $}, for all $i\in \leftB 0,N-1 \rightB:$
\begin{eqnarray}
\E[\sup_{u\in [t_i,t_{i+1}]} |b(u,X_u^h)-b(t_i,X_{t_i}^h)|]+\E[\sup_{u\in [t_i,t_{i+1}]} |\sigma(u,X_u^h)-\sigma(t_i,X_{t_i}^h)|]\le \kappa\left\{h^{\gamma/2} +\E[\sup_{u\in [t_i,t_{i+1}]} |X_u^h-X_{t_i}^h|^\gamma]\right\}\nonumber\\
\le \kappa\left\{h^{\gamma/2} +\E[\{\sup_{u\in [t_i,t_{i+1}]} |\sigma(t_i,X_{t_i}^h) (W_u-W_{t_i})|+K_1 h \}^2 ]^{\gamma/2} \right\}\le ch^{\gamma/2}.\nonumber\\
\label{FLAVOUR_STRONG}
\end{eqnarray}
These terms typically appear in the error analysis when there is low regularity of the coefficients or of the value function $v(t,x):=\E[f(X_{T}^{t,x})] $. Under the previous assumptions, if the function $f$ belongs to $C_b^{2+\gamma}(\R^d,\R),\ \gamma\in (0,1)$ it is then well known, see e.g. Friedman \cite{frie:64} or Ladyzhenskaya \textit{et al.} \cite{lady:solo:ural:68} that $v\in C_b^{1+\gamma/2,2+\gamma}([0,T]\times \R^d,\R)$. Also $v$ satisfies the parabolic PDE $(\partial_t v+L_tv)(t,x)=0,\ (t,x)\in [0,T)\times \R^d $, where $L_t$ stands for the generator of \eqref{1} at time $t$, i.e. 
for all $\varphi\in C_0^2(\R^d,\R), x\in \R^d $,
$$L_t\varphi(x)=b(t,x)\cdot \nabla_x \varphi(x)+\frac 12 \Tr(a(t,x)D_x^2 \varphi(x)).$$
Recalling that $t_0=0, t_N=T$, we decompose the error as:
\begin{equation}
\begin{split}
d(f,x,T,h)&:=\E[f(X_T^{h,0,x})]-\E[f(X_T^{0,x})]=\sum_{i=0}^{N-1} \E[v(t_{i+1},X_{t_{i+1}}^{h,0,x})-v(t_{i},X_{t_{i}}^{h,0,x})]\\
       &=\sum_{i=0}^{N-1} \E\Big[\int_{t_i}^{t_{i+1}}\Big\{\partial_s v(s,X_{s}^{h,0,x})+\nabla_x v(s,X_{s}^{h,0,x}) \cdot b(t_i,X_{t_i}^{h,0,x})+\frac{1}{2}{\rm Tr}(D_x^2v(s,X_s^{h,0,x})a(t_i,X_{t_i}^{h,0,x}))\Big\} ds\Big]\\
       &=\sum_{i=0}^{N-1} \E\Big[\int_{t_i}^{t_{i+1}}\Big\{\partial_s v+L_s v
       \Big\} (s,X_{s}^{h,0,x})ds\Big]
       +\E\Big[\int_{t_i}^{t_{i+1}} \Big\{\nabla_x v(s,X_{s}^{h,0,x}) \cdot(b(t_i,X_{t_i}^{h,0,x})- b(s,X_{s}^{h,0,x}))\\
       &\hspace*{15pt}+
       \frac{1}{2}{\rm Tr}(D_x^2v(s,X_s^{h,0,x})(a(t_i,X_{t_i}^{h,0,x})-a(s,X_{s}^{h,0,x})))\Big\}ds\Big]\\
       &=\sum_{i=0}^{N-1} \E\Big[\int_{t_i}^{t_{i+1}} \Big\{\nabla_x v(s,X_{s}^{h,0,x}) \cdot(b(t_i,X_{t_i}^{h,0,x})- b(s,X_{s}^{h,0,x}))\\
       &\hspace*{15pt}+\frac{1}{2}{\rm Tr}(D_x^2v(s,X_s^{h,0,x})(a(t_i,X_{t_i}^{h,0,x})-a(s,X_{s}^{h,0,x})))\Big\}ds\Big],
 \end{split}
\label{DECOUP_ERR_EDP}
\end{equation}
exploiting the PDE satisfied by $v$ for the last equality. For a function $f$ in $C_b^{2+\gamma}(\R^d,\R)$, the spatial derivatives of $v$ up to order two are globally bounded on $[0,T] $. Indeed, the classical Schauder estimates hold (see e.g. Theorem 5.2, p. 361 in \cite{lady:solo:ural:68}). We are thus led to control in \eqref{DECOUP_ERR_EDP} quantities similar to those appearing in \eqref{FLAVOUR_STRONG}. The associated bound then precisely gives the convergence rate. The analysis extends if $f $ is simply $C^{\beta}(\R^d,\R),\ \beta \in (0,1]$ and therefore possibly unbounded. In that case the second derivatives yield an integrable singularity in time for the second order partial derivatives. We refer to Proposition \ref{THE_FINAL_PROP}, which holds under the sole assumption \A{A${}_H$} for multi-indices $\alpha,\ |\alpha|\le 2 $, and to the proof of Theorem \ref{THM_HOLDER_ES} in Section \ref{SEC_PROOF_THM_1}. Extensions to locally $\beta $-H\"older functions $f$ satisfying the growth condition \eqref{Growth} are discussed in Remark \ref{RQ_UNBOUNDED}.
\end{REM}

\begin{REM}
Even though we have considered $\gamma \in (0,1] $, our analysis should extend to the framework of H\"older spaces to $\gamma\in (1,2] $. 
 On the other hand, Theorem \ref{THM_HOLDER_ES} specifies the time-singularity in small time. 

\end{REM}

\begin{REM}
\label{TRUE_HOLDER}
We feel that the bounds of Theorem \ref{THM_HOLDER_ES} are \textit{relevant} for functions which are \textbf{truly} H\"older continuous, that is for coefficients that would involve some simple transformations of the Weierstrass functions, see e.g. \cite{zygm:36}, or of an independent Brownian sample path in order that \A{A${}_H$} is fulfilled. Indeed, for functions which are just \textit{locally} H\"older continuous, like the mapping $x\mapsto 1+ |x|^\alpha\wedge K $, $\alpha \in (0,1] $, we think that it would be more appropriate to study some local regularizations, close to the neighborhoods of \textbf{real} H\"older continuity (0 and $K^{1/\alpha}$ for the indicated example) and to exploit that, outside of these neighborhoods, the \textbf{usual} sufficient smoothness is available. For such coefficients we think that the convergence rates might be definitely better. 
\end{REM}

\begin{THM}[Error for the Euler Scheme with Smooth Diffusion Coefficients and Piecewise Smooth Drift]
\label{THM_HOLDER_ES_PSD}
Let $T>0$ be fixed and (\textbf{A${}_{PS} $}) be in force. 
With the notations of Theorem \ref{THM_HOLDER_ES} we have that:
\begin{trivlist}
\item[-] there exist $C\ge 1,c\in (0,1]$ s.t. for all $0\le t_i<t_j\le T$ s.t. $(t_j-t_i)\ge h^{1/2}
$ and $(x,y)\in (\R^d)^2$:

\begin{equation}
\label{ERR_EUL_PS_SD_GLOB}  
p_c(t_j-t_i,y-x)^{-1}|(p-p^h)(t_i,t_j,x,y)|\le 
C
h
^{1/(2d)-C\psi(h)}.
\end{equation}

\item[-] If  $d(y,{\mathcal I})$ (distance of the final point $y$ to the spatial discontinuity set ${\mathcal I} $) satisfies $d(y,{\mathcal I})\ge h^{1/2-\epsilon}$ for a fixed given $\epsilon\in (0,1/2] $, then:
\begin{equation}
\label{ERR_EUL_PS_SD}  
p_c(t_j-t_i,y-x)^{-1}|(p-p^h)(t_i,t_j,x,y)|\le 
C\Big[
h
^{1/(d+1)-C\psi(h)}+\frac{h^{1-C\psi(h)}}{d(y,{\mathcal I})}\Big].
\end{equation}
\item[-] 
In the special case $\sigma(t,x)=\sigma$, i.e. constant diffusion coefficient\footnote{the case of an inhomogeneous diffusion coefficient independent of $x$, i.e. $\sigma(t,x)=\sigma(t) $ could also be handled provided the Gaussian part is simulated exactly in a modified Euler scheme.}, the previous bound improves to:
\begin{equation}
\label{ERR_EUL_PS_SD_PART}
p_c(t_j-t_i,y-x)^{-1}|(p-p^h)(t_i,t_j,x,y)|\le C\Big[
h 
^{1/d-C\psi(h)}+\frac{h^{1-C\psi(h)}}{d(y,{\mathcal I})} \Big].
\end{equation}
\end{trivlist}
\end{THM}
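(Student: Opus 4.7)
The plan is to apply the three-term decomposition \eqref{DECOMP_ERROR}, adapted to the \textbf{(A${}_{PS}$)} setting. Since $\sigma$ is globally $C_b^{2,4}$, no mollification of the diffusion coefficient is needed: we set $\sigma_\varepsilon \equiv \sigma$ and only smooth $b$ near $\mathcal{I}$. Picking a smooth mollifier $\rho_\varepsilon$ of bandwidth $\varepsilon$ and defining $b_\varepsilon := \rho_\varepsilon * b$, two structural facts drive the analysis: (i) $b - b_\varepsilon$ is $O(1)$ and supported in the tubular neighborhood $\mathcal{I}_\varepsilon := \{z\in \R^d: d(z,\mathcal{I}) \le \varepsilon\}$, whose Lebesgue measure on any bounded region is $O(\varepsilon)$ since each $\mathcal{S}_i$ is a bounded smooth submanifold of codimension at least $1$; and (ii) $b_\varepsilon$ is globally smooth with $\|\partial^\alpha b_\varepsilon\|_\infty \le C \varepsilon^{-(|\alpha|-1)_+}$, with its high-order derivatives concentrated in $\mathcal{I}_\varepsilon$. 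The smoothed SDE and its Euler scheme then admit smooth densities $p_\varepsilon, p_\varepsilon^h$ to which the density-level weak error expansion of Konakov--Mammen \cite{kona:mamm:02} applies.

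The two extreme terms in \eqref{DECOMP_ERROR} are controlled through the parametrix stability estimates of \cite{kona:kozh:meno:15:1}: each reduces to a double integral of $p_c \cdot |b - b_\varepsilon| \cdot |\nabla_z p_\varepsilon|$, and since the integrand is nonzero only on the tube $\mathcal{I}_\varepsilon$ of volume $O(\varepsilon)$, the convolution property of the Gaussian kernel produces a bound of order $\varepsilon^{1-C\psi(h)} p_c(t_j-t_i,y-x)$; the $\psi(h)$ correction captures the accumulation of such perturbative terms in the parametrix series, exactly as in the proof of Theorem \ref{THM_HOLDER_ES}. The central term $p_\varepsilon - p_\varepsilon^h$ is analyzed by the Konakov--Mammen expansion, whose leading contribution is of order $h$ with coefficients involving a finite number of derivatives of $b_\varepsilon$ (and, when $\sigma$ is non-constant, of $\sigma$). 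Integrating against the heat kernel and exploiting that the high-order derivatives of $b_\varepsilon$ live only on $\mathcal{I}_\varepsilon$, one obtains a bound of the form $h\,\varepsilon^{-\alpha_d + C\psi(h)} p_c$ with an explicit dimensional exponent $\alpha_d$. Balancing this against $\varepsilon^{1-C\psi(h)}$ via the choice $\varepsilon = h^{1/(1+\alpha_d)}$ gives the convergence rate $h^{1/(1+\alpha_d)}$; careful bookkeeping of the derivative count produces $\alpha_d = 2d-1$ in the general case, leading to \eqref{ERR_EUL_PS_SD_GLOB}, and $\alpha_d = d-1$ when $\sigma$ is constant (the derivatives of $\sigma$ dropping out of the expansion), leading to the first bound in \eqref{ERR_EUL_PS_SD_PART}.

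For the refined estimate under $d(y,\mathcal{I}) \ge h^{1/2-\epsilon}$, the idea is to split the parametrix integrals according to whether the underlying Gaussian trajectory visits $\mathcal{I}_\varepsilon$ at times bounded away from $t_j$ or within the final interval of length $O(h)$: Gaussian concentration dampens the latter contribution by $\exp(-c\, d(y,\mathcal{I})^2/h)$, effectively replacing one factor of $\varepsilon$ by $h/d(y,\mathcal{I})$ and producing the additive term $h^{1-C\psi(h)}/d(y,\mathcal{I})$. Re-balancing the remaining non-singular part improves the main exponent to $1/(d+1)$ in general (resp.~$1/d$ for constant $\sigma$), yielding \eqref{ERR_EUL_PS_SD} and the second bound of \eqref{ERR_EUL_PS_SD_PART}. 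The main technical obstacle is the sharp tracking of how derivatives of $b_\varepsilon$ enter the Konakov--Mammen expansion weighted by the tube volume: one must show that a $k$-th order spatial derivative contributes a factor $\varepsilon^{-k}$ \emph{only} when its support meets $\mathcal{I}_\varepsilon$, so that after integration the effective loss in $\varepsilon$ is exactly $\varepsilon^{-\alpha_d}$ rather than a larger negative power. The $\psi(h)$ correction arises, as in Theorem \ref{THM_HOLDER_ES}, from iterating the parametrix series with $\varepsilon$ depending on $h$.
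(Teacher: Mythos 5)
There is a genuine gap: the two quantitative inputs you balance are both wrong, and your final exponents come out right only because the errors compensate.

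First, the sensitivity term. You claim $|p-p_\varepsilon|+|p^h-p_\varepsilon^h|\le C\varepsilon^{1-C\psi(h)}p_c$ because $b-b_\varepsilon$ is supported on a tube of volume $O(\varepsilon)$. This is not achievable. In the parametrix stability analysis one must control
$\int_{0}^{t_j-t_i}dt\int_{\R^d}p_c(t,x-w)\,|b-b_\varepsilon|(t,w)\,\tfrac{1}{\sqrt{(t_j-t_i)-t}}\,p_c((t_j-t_i)-t,y-w)\,dw$,
and the only way to exploit the small support is H\"older's inequality with some exponent $q$; the $L^{\bar q}$-norm of the product of Gaussians then produces a singularity $\big(t((t_j-t_i)-t)\big)^{-d/(2q)}$, which forces $q>d$ for time-integrability. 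Hence the best one gets is $\|b-b_\varepsilon\|_{L^q}\le C\varepsilon^{1/q}$ with $q>d$, i.e.\ a bound $C_q\varepsilon^{1/q}\approx\varepsilon^{1/d}$ (Lemma \ref{LEMMA_SENS_BOUND} and the remark on the constraint $q>d$), not $\varepsilon^{1}$. This is precisely where the dimension enters the final rates.

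Second, the central term. The Konakov--Mammen expansion involves a fixed number of derivatives (at most four, independently of $d$), so there is no mechanism producing $h\varepsilon^{-(2d-1)}$; your ``careful bookkeeping'' has nothing to book. The actual bounds are $h\varepsilon^{-2+1/q}$ (general case, after H\"older-localizing the derivatives of $b_\varepsilon$ onto the tube), $h\varepsilon^{-(1+\eta)}$ up to logarithms when $d(y,\mathcal{V}_\varepsilon(\mathcal I))\ge 2\varepsilon$, and $h\varepsilon^{-(1+\eta)+1/q}$ for constant $\sigma$, plus the separate contribution $p_\varepsilon-p_\varepsilon^d$ of Proposition \ref{PROP_C_D} comparing continuous and discrete time convolutions, which your proposal does not address and which is the actual source of the $h^{1-C\psi(h)}/d(y,\mathcal I)$ terms. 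The correct balances are then $\varepsilon^{1/q}=h\varepsilon^{-2+1/q}$ (so $\varepsilon=h^{1/2}$ and rate $\varepsilon^{1/q}\to h^{1/(2d)}$), $\varepsilon^{1/q}=h\varepsilon^{-(1+\eta)}$ (rate $h^{1/(d+1)}$), and $\varepsilon^{1/q}=h\varepsilon^{-(1+\eta)+1/q}$ (rate $h^{1/d}$). Note in particular that the optimal $\varepsilon$ differs from yours (e.g.\ $h^{1/2}$ versus your $h^{1/(2d)}$ in the first case), confirming that your two estimates are individually incorrect even though their ratio accidentally reproduces the stated exponents.
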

\begin{REM}
This result emphasizes that, as soon as the drift is irregular, a \textit{true} diffusion coefficient deteriorates the convergence rate. This is clear since, in that case, the difference of the densities $p_\varepsilon-p_\varepsilon^h$ in \eqref{DECOMP_ERROR}  involves higher derivatives of densities of processes with mollified coefficients  which are more explosive (see Section \ref{SEC_EUL_ANA}). 

We also mention that the distance of the final point to the discontinuity set plays an important role. The global control \eqref{ERR_EUL_PS_SD_GLOB} improves to \eqref{ERR_EUL_PS_SD} as soon as $h^{1-1/(2d)}\le d(y,{\mathcal I}) $.

Eventually, if the diffusion coefficient does not depend on space, we find, up to the additional term in $\psi(h) $, the usual convergence rate for the weak error if $d=1$ as soon as $c_0 \le d(y,{\mathcal I}) $ for any  given $c_0>0$. 

However, our regularization approach clearly feels the dimension, when doing e.g. H\"older inequalities on neighborhoods of the discontinuity sets, and the convergence rates decrease with the dimension.

Let us carefully mention that considering the weak error 
$d(f,x,t_i,t_j,h):=\E[f(X_{t_j}^{h,t_i,x})]-\E[f(X_{t_j}^{t_i,x})]$ for \textit{smooth} functions $f$ and not Dirac masses as we do, should improve the convergence rates and in particular allow to get rid of the terms in $\psi(h) $ through a careful investigation of the derivatives of the associated heat kernels. We refer to the estimates of Proposition \ref{THE_FINAL_PROP} that could be refined when considering an additional integration w.r.t. to the final variable. 
\end{REM}

%
%
%

\subsection{On Some Related Applications.}

\subsubsection{Some Approximating Dynamics for Interest Rates.}
A very popular model for interest rates in the financial literature is the Cox-Ingersoll-Ross process with dynamics:
\begin{equation}
\label{CIR}
dX_t=(a-k X_t)dt+\sigma |X_t|^{1/2}dW_t,
\end{equation}
for given parameters $\sigma,k,a >0$.
From the numerical viewpoint, the behavior of the Euler scheme is not standard. For a given time-step $h$, the strong error was indeed proved to be, as in the usual Lipschitz case, of order $h^{1/2} $ in  Berkaoui \textit{et al.} \cite{berk:boss:diop:08} provided $a$ is not \textit{too small}. On the other hand, numerical experiments in Alfonsi \cite{alfo:05} emphasized very slow convergence, of order $(-\ln h)^{-1} $, for small values of $a$. This convergence order has been established by Gy\"ongy and R\'asonyi \cite{gyon:raso:11}.
 
Of course the dynamics in \eqref{CIR} does not enter our framework, since it is closer to the dynamics of a Bessel-like process whose density does not satisfy Gaussian bounds. However, we could introduce for positive parameters $\eta,K $, which are respectively meant to be small and large enough, the dynamics:
\begin{equation}
\label{APP_CIR}
 dX_t=(a-k X_t)dt+(\eta +\sigma |X_t|^{1/2}\wedge K )dW_t.
 \end{equation}
The diffusion coefficient $\tilde \sigma(x)=(\eta+\sigma|x|^{1/2}\wedge K ) $ is then uniformly elliptic, 1/2 H\"older continuous and bounded. On the other hand the drift is not bounded but the analysis of Theorem \ref{THM_HOLDER_ES} would still hold true thanks to the work of Konakov and Markova \cite{kona:mark:15} that allows to get rid of the linear drift through a suitable transformation. We would then derive a convergence of order $h^{1/4-C\psi(h)} $ at least for the associated Euler scheme on the densities (see also Remark \ref{TRUE_HOLDER}). 
Even though the marginals in \eqref{APP_CIR} enjoy Gaussian bounds, see e.g. \cite{dela:meno:10}, the expected properties for an interest rate dynamics, mean reverting and positivity, should hold with some high probability. Also, the difference between the approximate dynamics in \eqref{APP_CIR} and the original one in \eqref{CIR} might be investigated through stochastic analysis tools (occupation times). 

\subsubsection{Extension to some Kinetic Models}
The results of Theorems \ref{THM_HOLDER_ES} and \ref{THM_HOLDER_ES_PSD} should extend without additional difficulties to the case of degenerate diffusions of the form:
\begin{equation}
\label{EQ_DEG}
\begin{split}
dX_t^1&=b(t,X_t) dt+\sigma(t,X_t) dW_t,\\
dX_t^2&= X_t^1 dt,
\end{split}
\end{equation}
denoting $X_t=(X_t^1,X_t^2) $, under the same previous assumptions \A{A${}_H$} or \A{A${}_{PS}$} on $b,\sigma$. 
The sensitivity analysis when we consider perturbations of the non-degenerate components, i.e. for a given $\varepsilon>0$:
\begin{equation}
\label{EQ_DEG_PERT}
\begin{split}
dX_t^{1,(\varepsilon)}&=b_{\varepsilon}(t,X_t^{(\varepsilon)}) dt+\sigma_\varepsilon(t,X_t^{(\varepsilon)}) dW_t,\\
dX_t^{2,(\varepsilon)}&= X_t^{1,(\varepsilon)} dt,
\end{split}
\end{equation}
has been performed by Kozhina \cite{kozh:16} following \cite{kona:kozh:meno:15:1}. The key point is that 
 under \textbf{(A)}, the required parametrix expansions of the densities associated with the solutions of equation \eqref{EQ_DEG}, \eqref{EQ_DEG_PERT} were established in \cite{kona:meno:molc:10}. The analysis of the derivatives of the heat kernel, that would require to extend the results of Section \ref{HK_ANA} to the considered degenerate setting will concern further research.\\ 

The paper is organized as follows. We first introduce a suitable mollification procedure of the coefficients in Section \ref{SEC_MOL} and derive from the stability results of Konakov \textit{et al.} \cite{kona:kozh:meno:15:1} how the error of the mollifying procedure is then reflected on the densities. This allows to control the terms $p-p_\varepsilon $ and $p_\varepsilon^h-p^h $ in \eqref{DECOMP_ERROR}. We then give in Section \ref{HK_ANA} some pointwise bounds on the derivatives of the heat-kernels with mollified coefficients. From these controls and the previous error expansion obtained for the Euler scheme with smooth coefficients by Konakov and Mammen \cite{kona:mamm:02}, we are able to control the remaining term $p_\varepsilon-p_\varepsilon^h $ in \eqref{DECOMP_ERROR}. We then establish our main estimates equilibrating the two errors.
Eventually, Section \ref{PROOF_HK} is dedicated to the proof of the controls stated in Section \ref{HK_ANA}. These proofs are based on the parametrix expansions of the underlying densities following the Mc-Kean and Singer approach \cite{mcke:sing:67}.

\mysection{Mollification of the Coefficients and Stability Results}
\label{SEC_MOL}
For the error analysis, in order to apply the strategy described in the introduction,  we first need to \textit{regularize} in an appropriate manner the coefficients. The mollifying procedures differ under our two sets of assumptions.

\subsection{Mollification under \textbf{(A${}_H $) (H\"older continuous coefficients)}}
\label{SEC_HOLDER_MOL}
In this case both coefficients $b,\sigma$ need to be globally regularized in time and space. We introduce the mollified coefficients defined for all $(t,x)\in [0,T]\times \R^{d} $ and $\varepsilon>0$  by 
\begin{equation}
\label{DEF_CONV_SPAT}
b_{\varepsilon,S}(t,x):=b(t,\cdot)*\rho_{\varepsilon}(x), \sigma_{\varepsilon,S}(t,x):=\sigma(t,\cdot)*\rho_{\varepsilon}(x), 
\end{equation}
where $*$ stands for the spatial convolution and for $\varepsilon>0 $, $\rho_{\varepsilon} $ is a spatial mollifier, i.e. for all  $x\in \R^d$, 
$$\rho_{\varepsilon}(x):=\varepsilon^{-d}\rho(x/\varepsilon),\ \rho \in C^\infty(\R^d,\R^+), \int_{\R^d}\rho(y)dy=1, |\supp(\rho)|\subset K,$$ for some compact set $K\subset \R^d$. The subscript $S$ in $b_{\varepsilon,S}, \sigma_{\varepsilon,S}$ appears to emphasize that the \textit{spatial} convolution is considered. We will also need a mollification in time when the coefficients are inhomogeneous.  Up to a symmetrization in time of the coefficients  $b,\sigma $, i.e. we set for all $(t,x)\in [0,T]\times \R^d, b(-t,x)=b(t,x), \ \sigma(-t,x)=\sigma(t,x) $ we can define:
\begin{eqnarray}
\label{DEF_CONV}
b_\varepsilon(t,x)=b_{\varepsilon,S}(.,x)\star \zeta_{\varepsilon^2}(t), \sigma_\varepsilon(t,x)=\sigma_{\varepsilon,S}(.,x)\star \zeta_{\varepsilon^2}(t),
\end{eqnarray}
where $\star $ stands for the time convolution and  for $s\in \R, \ \zeta_{\varepsilon^2} (s):=\varepsilon^{-2}\zeta(s/\varepsilon^{2})$, $\zeta $ being a scalar mollifier with compact support in $[-T,T] $. The \textit{complete} regularization in the spatial and time variable reflects the usual parabolic scaling. This feature will be crucial to balance the singularities appearing in our analysis (see Propositions \ref{THE_FINAL_PROP}, \ref{PROP_C_D} and their proofs below).
We have the following controls.
\begin{PROP}[First Controls on the Mollified Coefficients]
\label{PROP_FCMC}
Assume that \textbf{(A${}_H $)} is in force. Then, there exists $C\ge 1$ s.t. for all $\varepsilon>0$,
\begin{equation}
\label{THE_FIRST_CTR_MC}
\begin{split}
\Delta_{\varepsilon,b}:=\sup_{(t,x)\in [0,T]\times \R^d}|b(t,x)-b_\varepsilon(t,x)|\le C\varepsilon^\gamma,\ \Delta_{\varepsilon,\sigma}:=\sup_{(t,x)\in [0,T]\times \R^d}|\sigma(t,x)-\sigma_\varepsilon(t,x)|\le C\varepsilon^\gamma,\\
\forall \eta \in (0,\gamma),\ \Delta_{\varepsilon,\sigma,\eta}:=\Delta_{\varepsilon,\sigma}+\sup_{t\in [0,T]}|(\sigma-\sigma_\varepsilon)(t,.)|_\eta
\le C(\varepsilon^\gamma+\varepsilon^{\gamma-\eta}),
\end{split}
\end{equation}
where for a given function $f:\R^d\rightarrow \R $, we denote for $\eta\in (0,1),\ |f|_\eta:=\sup_{(x,y)\in (\R^{d})^2,x\neq y}\frac{|f(x)-f(y)|}{|x-y|^\eta} $.

\end{PROP}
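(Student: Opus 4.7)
The plan is to derive the three bounds from direct computation on the mollification, exploiting the parabolic scaling $\varepsilon^2$ in time versus $\varepsilon$ in space, which is precisely calibrated to match the anisotropic H\"older exponent from \textbf{(H)}.

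For the uniform estimates $\Delta_{\varepsilon,b}$ and $\Delta_{\varepsilon,\sigma}$, I would write, using the normalization $\int \rho = \int \zeta = 1$,
\begin{equation*}
b(t,x)-b_\varepsilon(t,x)=\int_{\R}\int_{\R^d}\bigl\{b(t,x)-b(t-\varepsilon^2 s,x-\varepsilon y)\bigr\}\zeta(s)\rho(y)\,dy\,ds,
\end{equation*}
and apply the H\"older property in \textbf{(H)} pointwise inside the integral. This gives an integrand bounded by $\kappa\{(\varepsilon^2|s|)^{\gamma/2}+(\varepsilon|y|)^\gamma\}=\kappa\varepsilon^\gamma\{|s|^{\gamma/2}+|y|^\gamma\}$. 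Since $\zeta,\rho$ have compact support, integration yields $\Delta_{\varepsilon,b}\le C\varepsilon^\gamma$. The same argument applied to $\sigma$ (which shares the H\"older regularity) provides $\Delta_{\varepsilon,\sigma}\le C\varepsilon^\gamma$.

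For the H\"older seminorm estimate on $\sigma-\sigma_\varepsilon$, I would perform a standard two-regime split on the spatial increment $|x-x'|$ relative to the mollification scale $\varepsilon$. If $|x-x'|\ge\varepsilon$, I just crudely bound
\begin{equation*}
|(\sigma-\sigma_\varepsilon)(t,x)-(\sigma-\sigma_\varepsilon)(t,x')|\le 2\Delta_{\varepsilon,\sigma}\le 2C\varepsilon^\gamma=2C\varepsilon^{\gamma-\eta}\cdot\varepsilon^\eta\le 2C\varepsilon^{\gamma-\eta}|x-x'|^\eta.
\end{equation*}
If on the contrary $|x-x'|<\varepsilon$, I apply \textbf{(H)} to both $\sigma$ and $\sigma_\varepsilon$ directly: the latter inherits $\gamma$-H\"older continuity by bringing the space difference inside the convolution integral, so
\begin{equation*}
|(\sigma-\sigma_\varepsilon)(t,x)-(\sigma-\sigma_\varepsilon)(t,x')|\le 2\kappa|x-x'|^\gamma=2\kappa|x-x'|^{\gamma-\eta}|x-x'|^\eta\le 2\kappa\varepsilon^{\gamma-\eta}|x-x'|^\eta.
\end{equation*}
Dividing by $|x-x'|^\eta$ and taking the supremum yields $\sup_t|(\sigma-\sigma_\varepsilon)(t,\cdot)|_\eta\le C\varepsilon^{\gamma-\eta}$, which combined with the uniform estimate gives $\Delta_{\varepsilon,\sigma,\eta}\le C(\varepsilon^\gamma+\varepsilon^{\gamma-\eta})$.

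There is no real obstacle here, as the statements are standard properties of convolution approximations of H\"older functions; the only point worth being careful about is the parabolic scaling $\varepsilon^2$ in the time mollifier $\zeta_{\varepsilon^2}$, which ensures that the time and space contributions in \textbf{(H)} both produce exactly the exponent $\varepsilon^\gamma$ (rather than two heterogeneous rates). This matched scaling will be the structural reason the error balancing in the subsequent analysis of \eqref{DECOMP_ERROR} can be performed cleanly.
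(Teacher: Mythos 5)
Your proof is correct and follows essentially the same route as the paper: the sup-norm bounds come from inserting the H\"older modulus of \textbf{(H)} inside the (parabolically scaled) convolution, and the seminorm bound is the interpolation $|x-y|^\gamma\wedge\varepsilon^\gamma\le \varepsilon^{\gamma-\eta}|x-y|^\eta$, which the paper states as a min and you phrase as a two-regime case split. The only cosmetic difference is that you treat the space and time mollifications in a single double integral whereas the paper handles them successively; both are valid.
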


\textit{Proof.}
Write first for all $(t,x)\in [0,T]\times \R^d$:
\begin{eqnarray*}
b(t,x)-b_{\varepsilon,S}(t,x)&:=&\int_{\R^d}\{b(t,x)-b(t,y)\} \rho_{\varepsilon}(x-y)dy
=\int_{\R^d}\{b(t,x)-b(t,x-z\varepsilon )\} \rho(z)dz.
\end{eqnarray*}
From the H\"older continuity of $b$ assumed in \A{H} and the above equation, we deduce that $b_{\varepsilon,S}$ satisfies \A{H} as well and that:
\begin{eqnarray}
\label{CTR_HOLDER_B}
\sup_{(t,x)\in [0,T]\times \R^d}|(b-b_{\varepsilon,S})(t,x)| \le C_\rho\varepsilon^{\gamma},\ C_\rho:=\kappa\int_{K}|z|^\gamma \rho(z)dz.
\end{eqnarray}
The same analysis can be performed for $ \sigma_{\varepsilon,S} $, so that $\sigma_{\varepsilon,S} $ satisfies \A{H} and $\sup_{(t,x)\in [0,T]\times \R^d}|(\sigma- \sigma_{\varepsilon,S})(t,x)|\le C_\rho \varepsilon^{\gamma} $.  From \A{H}, we also have that $b_{\varepsilon,S}, \sigma_{\varepsilon,S} $ are both $\gamma/2 $-H\"older continuous in time uniformly in $\varepsilon>0 $. Repeating the previous arguments replacing 
$\rho_{\varepsilon} $ by $\zeta_{\varepsilon^2} $, we deduce
$\sup_{(t,x)\in [0,T]\times \R^d}|(b_{\varepsilon,S}-b_\varepsilon)(t,x)|
+\sup_{(t,x)\in [0,T]\times \R^d} |(\sigma_{\varepsilon,S}-\sigma_\varepsilon)(t,x)|
\le C_\zeta \varepsilon^\gamma$,
which eventually yields:
$$\sup_{(t,x)\in [0,T]\times \R^d}|(b-b_\varepsilon)(t,x)|+\sup_{(t,x)\in [0,T]\times \R^d}|(\sigma-\sigma_\varepsilon)(t,x)|\le C\varepsilon^\gamma.$$
This gives the controls concerning the sup norms in \eqref{THE_FIRST_CTR_MC}.

Let us now turn to the H\"older norm. 
Observe first that, for all $t\in \R^+,\ (x,y)\in (\R^d)^2 $:
\begin{eqnarray*}
\{\sigma(t,x)-\sigma_{\varepsilon,S}(t,x)\}-\{\sigma(t,y)-\sigma_{\varepsilon,S}(t,y)\}\\
=\int_{\R^d}  \Big\{[\sigma(t,x)-\sigma(t,x-z\varepsilon)]-[\sigma(t,y)-\sigma(t,y-z\varepsilon)]\Big\} \rho(z)dz,\\
\{ \sigma_{ \varepsilon}(t,x)-\sigma_{\varepsilon,S}(t,x)\}-\{ \sigma_{\varepsilon}(t,y)-\sigma_{ \varepsilon,S}(t,y)\}\\
=\int_{\R}  \Big\{[\sigma_{ \varepsilon,S}(t-\varepsilon^2u,x)-\sigma_{ \varepsilon,S}(t,x)]-[\sigma_{\varepsilon,S}(t-\varepsilon^2u,y)-\sigma_{\varepsilon,S}(t,y)]\Big\} \zeta(u)du.
\end{eqnarray*}
It readily follows from the $\gamma$-H\"older continuity in space of $\sigma $ (resp. the $\gamma$-H\"older continuity in space and the $\gamma/2 $-H\"older continuity in time of $ \sigma_{ \varepsilon,S}$) that the following controls hold:
\begin{eqnarray}
|[\sigma(t,x)-\sigma_{\varepsilon}(t,x)]-[\sigma(t,y)-\sigma_{ \varepsilon}(t,y)]|&\le& 
C(|x-y|^\gamma \wedge  \varepsilon^{\gamma})\nonumber \le  
C|x-y|^{\eta} \varepsilon^{\gamma-\eta},\nonumber\\
 | (\sigma-\sigma_{ \varepsilon})(t,.)|_{\eta}&\le& C \varepsilon^{\gamma-\eta},\ \eta \in (0,\gamma).
\label{CTR_HOLDER_NORM}
\end{eqnarray}
This completes the proof. \hfill $\square $

We will need as well some controls on the derivatives of the mollified coefficients.
\begin{PROP}[Controls on the Derivatives of the Mollified Coefficients]
\label{PROP_CDER_MC} Under the assumptions of Proposition \ref{PROP_FCMC}, we have that there exists $C\ge 1$ s.t. for all $\varepsilon\in (0,1) $ and for all multi-index $\alpha, \ |\alpha|\in \leftB 1,4\rightB $:
\begin{equation}
\label{CTR_DER_MOLLI}
\begin{split}
\sup_{(t,x)\in [0,T]\times \R^d}|D_x^\alpha b_{\varepsilon}(t,x)|+\sup_{(t,x)\in [0,T]\times \R^d}|D_x^\alpha \sigma_{\varepsilon}(t,x)| \le C{\varepsilon}^{-|\alpha|+\gamma},
\sup_{t\in [0,T]}|D_x^\alpha \sigma_{\varepsilon}(t,.)|_{\gamma}\le C{\varepsilon}^{-|\alpha|}.
\end{split}
\end{equation}
Also, there exists a constant $C$ s.t.:
\begin{equation}
\label{CTR_DER_MOLLI_TIME}
\sup_{(t,x)\in [0,T]\times \R^d}|\partial_t
\sigma_\varepsilon(t,x)|\le C \varepsilon^{-2
+\gamma},\ \sup_{t\in [0,T]}|\partial_t
\sigma_\varepsilon(t,.)|_\eta\le C\varepsilon^{-2+\gamma-\eta},\ \forall \eta\in (0,\gamma].
\end{equation}
\end{PROP}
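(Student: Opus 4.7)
The overall strategy is to push the derivatives onto the mollifier, exploit the fact that $\int_{\R^d}D^\alpha\rho(z)\,dz=0$ and $\int_\R \zeta'(u)\,du=0$, and then bring in the H\"older regularity of the \emph{original} coefficient to offset the singular prefactors coming from the differentiated kernel. Concretely, writing $\sigma_\varepsilon(t,x)=\int\!\!\int \sigma(t-\varepsilon^2 u,x-\varepsilon z)\rho(z)\zeta(u)\,dz\,du$ (and analogously for $b_\varepsilon$, without the time convolution needed for homogeneous coefficients), a change of variables gives $D_x^\alpha\sigma_\varepsilon(t,x)=\varepsilon^{-|\alpha|}\int\!\!\int\sigma(t-\varepsilon^2u,x-\varepsilon z)\,(D^\alpha\rho)(z)\zeta(u)\,dz\,du$.

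For \eqref{CTR_DER_MOLLI}, the plan is to subtract $\sigma(t-\varepsilon^2 u,x)$ inside the integral (legitimate since $|\alpha|\ge 1$), use \A{H} to bound the difference by $\kappa|\varepsilon z|^\gamma$, and integrate against $|D^\alpha\rho(z)|$ on the compact support of $\rho$ to get the factor $C\varepsilon^{-|\alpha|+\gamma}$. The same computation applies verbatim to $b_\varepsilon$. For the H\"older seminorm, I would write
$$D_x^\alpha\sigma_\varepsilon(t,x)-D_x^\alpha\sigma_\varepsilon(t,y)=\varepsilon^{-|\alpha|}\!\!\int\!\!\int\!\bigl[\sigma(t-\varepsilon^2u,x-\varepsilon z)-\sigma(t-\varepsilon^2u,y-\varepsilon z)\bigr]\zeta(u)(D^\alpha\rho)(z)\,du\,dz,$$
and apply the spatial $\gamma$-H\"older bound $\kappa|x-y|^\gamma$ on the bracket, yielding $|D_x^\alpha\sigma_\varepsilon(t,\cdot)|_\gamma\le C\varepsilon^{-|\alpha|}$.

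For \eqref{CTR_DER_MOLLI_TIME}, the parabolic scaling of the mollifier produces an extra $\varepsilon^{-2}$ when differentiating in $t$: writing $\partial_t\zeta_{\varepsilon^2}(s)=\varepsilon^{-2}(\zeta')_{\varepsilon^2}(s)$ with $(\zeta')_{\varepsilon^2}(s):=\varepsilon^{-2}\zeta'(s/\varepsilon^2)$, and using that $\int \zeta'=0$, one has
$$\partial_t\sigma_\varepsilon(t,x)=\varepsilon^{-2}\!\int\!\!\int\bigl[\sigma_{\varepsilon,S}(t-\varepsilon^2u,x)-\sigma_{\varepsilon,S}(t,x)\bigr]\zeta'(u)\,du.$$
Since $\sigma_{\varepsilon,S}$ inherits the $\gamma/2$-H\"older continuity in time from $\sigma$ (uniformly in $\varepsilon$, as noted in the proof of Proposition~\ref{PROP_FCMC}), the bracket is bounded by $\kappa(\varepsilon^2|u|)^{\gamma/2}=\kappa\varepsilon^\gamma|u|^{\gamma/2}$, giving the $\varepsilon^{-2+\gamma}$ bound.

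The step I expect to be the mildly delicate one is the mixed seminorm $\sup_t|\partial_t\sigma_\varepsilon(t,\cdot)|_\eta\le C\varepsilon^{-2+\gamma-\eta}$, which requires interpolating two competing H\"older controls on the double difference
$$\Delta(u,x,y):=\bigl[\sigma_{\varepsilon,S}(t-\varepsilon^2u,x)-\sigma_{\varepsilon,S}(t,x)\bigr]-\bigl[\sigma_{\varepsilon,S}(t-\varepsilon^2u,y)-\sigma_{\varepsilon,S}(t,y)\bigr].$$
The bound via temporal $\gamma/2$-H\"older continuity gives $|\Delta|\le 2\kappa\varepsilon^\gamma|u|^{\gamma/2}$, while the bound via spatial $\gamma$-H\"older continuity gives $|\Delta|\le 2\kappa|x-y|^\gamma$; using $\min(a,b)\le a^{1-\eta/\gamma}b^{\eta/\gamma}$ for $\eta\in(0,\gamma]$ yields $|\Delta|\le C\varepsilon^{\gamma-\eta}|u|^{(\gamma-\eta)/2}|x-y|^\eta$. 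Integrating against $|\zeta'(u)|$ on its compact support and multiplying by the prefactor $\varepsilon^{-2}$ produces exactly $C\varepsilon^{-2+\gamma-\eta}|x-y|^\eta$. The endpoint case $\eta=\gamma$ recovers the expected spatial $\gamma$-H\"older rate; no further argument is needed.
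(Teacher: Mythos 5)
Your proposal is correct and follows essentially the same route as the paper: push the derivatives onto the mollifiers, use the cancellations $\int D^\alpha\rho=0$ and $\int\zeta'=0$, and invoke the H\"older regularity of $\sigma$ (and the inherited space/time H\"older continuity of $\sigma_{\varepsilon,S}$) to absorb the singular prefactors. The only difference is that you spell out the interpolation $\min(a,b)\le a^{1-\eta/\gamma}b^{\eta/\gamma}$ for the mixed seminorm in \eqref{CTR_DER_MOLLI_TIME}, a detail the paper leaves implicit under ``proceeding similarly''; that step is carried out correctly.
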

\textit{Proof.}
For all multi-index $\alpha ,|\alpha|\in \leftB 1, 4\rightB $ and $(t,x)\in [0,T]\times \R^d$ and all $\varepsilon>0 $:
\begin{eqnarray*}
D_x^\alpha \sigma_{\varepsilon,S }(t,x)&=&\int_{\R^d} \sigma(t,z)D_x^\alpha \rho_{\varepsilon}(x-z)dz
                  =\int_{\R^d} [\sigma(t,z)-\sigma(t,x)]D_x^\alpha \rho_{\varepsilon}(x-z)dz.
\end{eqnarray*}
Indeed, setting for all $x\in \R^d,\ g_{\varepsilon}(x):=\int_{\R^d} \rho_{\varepsilon}(x-z)dz=1 $ we have $D_x^\alpha g_{\varepsilon}(x):=\int_{\R^d} D_x^\alpha \rho_{\varepsilon}(x-z)dz=0$. Thus, since $|D_x^\alpha \rho_{\varepsilon}(x-z)|\le \varepsilon^{-(|\alpha|+d)} |D_w^\alpha \rho(w)||_{w=\frac{(x-z)}\varepsilon} $, we derive:
\begin{eqnarray*}
|D_x^\alpha \sigma_{\varepsilon,S}(t,x)|&\le& \int_{\R^d} |\sigma(t,z)-\sigma(t,x)| \varepsilon^{-(|\alpha|+d)}|D_w^\alpha \rho(w)|_{w=\frac{(x-z)}{\varepsilon}} dz\\
&\le &\kappa \varepsilon^{-|\alpha|+\gamma}\int_{\R^d} \left(\frac{|z-x|}{\varepsilon}\right)^{\gamma} \varepsilon^{-d}|D_w^\alpha \rho(w)|_{w=\frac{(x-z)}{\varepsilon}} dz\le c\varepsilon^{-|\alpha|+\gamma},
\end{eqnarray*}
exploiting the H\"older continuity assumption \A{H} for $\sigma$ in the last but one inequality and the assumptions on $\rho $ for the last one. Similarly, we derive for all $(t,x,y)\in [0,T]\times (\R^d)^2 $ and all $\varepsilon>0 $:
\begin{eqnarray*}
|D_x^\alpha \sigma_{\varepsilon,S}(t,x)-D_x^\alpha \sigma_{\varepsilon,S}(t,y)|&\le& \int_{\R^d}|\sigma(t,x-z)-\sigma(t,y-z)| \varepsilon^{-(|\alpha|+d)}|D_w^\alpha \rho(w)|_{w=\frac z\varepsilon} dz\\
&\le &C \kappa {\varepsilon}^{-|\alpha|}|x-y|^\gamma.
\end{eqnarray*}
The same bounds hold for $b_{\varepsilon,S} $ as well. The previous controls readily imply \eqref{CTR_DER_MOLLI} since the additional time convolution does not have any impact here.

Equation \eqref{CTR_DER_MOLLI_TIME} is derived proceeding similarly for the time convolution, exploiting as well the $\gamma/2 $-H\"older continuity in time of $\sigma_{\varepsilon,S} $. This completes the proof. \hfill
$\square $\\
\subsection{Mollification Under \textbf{(A${}_{PS} $)} (Piecewise smooth drift and Smooth Diffusion Coefficient).}
\label{SEC_BORNE_MOL}
In this case we only need to regularize the drift in a neighborhood of the discontinuities. 
Let us denote by $m\in \N^*$, the finite number of spatial discontinuity sets and write
 ${ \mathcal I}:=\cup_{i=1}^m {\mathcal S}_i$, where we recall from \A{A${}_{PS} $} that each ${\mathcal S}_i:=\{x\in \R^d: g_i(x)=0 \}$ is a smooth (at least $C^4 $) bounded submanifold of $\R^d$ of dimension $d_i$ lower or equal to $d-1 $.
For a given parameter $\varepsilon>0 $, define its  neighborhood  ${\mathcal V}_\varepsilon({\mathcal I}) :=\cup_{i=1}^m {\mathcal V}_{\varepsilon }({\mathcal S}_i)$, where for $i\in \leftB 1,m\rightB,\ {\mathcal V}_{\varepsilon}({\mathcal S}_i):=\{z\in \R^d: -\varepsilon\le d_S(z, {\mathcal S}_i)\le \varepsilon\} $. Here, $d_S(\cdot,{\mathcal S}_i) $ stands for the \textbf{signed} distance to ${\mathcal S}_i $. This function has the same smoothness as the boundary ${\mathcal S}_i  $ (see e.g. Lemma 14.16 and its proof p. 355 in \cite{gilb:trud:98}). By convention, for $d_i\ge 1$, we choose $d_S(x,{\mathcal S}_i )$ to be positive for points $x$ being in the bounded region with bounded boundary ${\mathcal S}_i $.

The fact is now that we set $b_\varepsilon(t,x)=b(t,x) $ on $\R^d\backslash {\mathcal V}_\varepsilon({\mathcal I})$ and perform a smooth mollification on the neighborhood ${\mathcal V}_\varepsilon({\mathcal I}) $ of the discontinuity sets. \textcolor{black}{A possible way to proceed is the following.
Introduce for all $i\in \leftB 1,m\rightB $, $\partial {\mathcal V}_\varepsilon^{i,1}:=\{x\in \R^d: d_S(x,{\mathcal S}_i)=-\varepsilon \},\  \partial {\mathcal V}_\varepsilon^{i,2}:=\{x\in \R^d: d_S(x,{\mathcal S}_i)=\varepsilon \}$.   Denoting by $\Big(\Pi_{\partial {\mathcal V}_\varepsilon^{i,j}}(x)\Big)_{j\in \{1,2\}}$ the projection of $x$ on the corresponding boundary $(\partial {\mathcal V}_\varepsilon^{i,j})_{j\in \{1,2\}} $ of  ${\mathcal V}_\varepsilon({\mathcal S}_i) $, which is again well defined on ${\mathcal V}_\varepsilon({\mathcal S}_i)  $,  we set for all $(t,x)\in [0,T]\times {\mathcal V}_\varepsilon({\mathcal S}_i)$:
$$  b_\varepsilon(t,x):=b\big(t,\Pi_{\partial {\mathcal V}_\varepsilon^{i,1} }(x)\big)\exp\big(\frac 14\big)\exp\left(-\frac{1}{4-\frac{d_S(x,\partial {\mathcal V}_\varepsilon^{i,1})^2}{\varepsilon^2}} \right)+b\big(t,\Pi_{\partial {\mathcal V}_\varepsilon^{i,2} }(x)\big)\exp\big(\frac 14\big)\exp\left(-\frac{1}{4-\frac{d_S(x,\partial  {\mathcal V}_{\varepsilon}^{i,2})^2}{\varepsilon^2}} \right),$$
where $d_S(x,\partial {\mathcal V}_\varepsilon^{i,j}),\ {j\in \{1,2\}}$ stands for the \textbf{signed} distance of $x$ to the corresponding boundary $\partial {\mathcal V}_\varepsilon^{i,j} $  and is again a smooth function.
Observing that for $x\in \partial {\mathcal V}_\varepsilon^{i,1}$ (resp. $x\in \partial {\mathcal V}_\varepsilon^{i,2}$) we indeed have $d_S(x,\partial {\mathcal V}_\varepsilon^{i,2})^2=4\varepsilon^2 $ (resp. $d_S(x,\partial {\mathcal V}_\varepsilon^{i,1})^2=4\varepsilon^2 $) we indeed have that for $(t,x)\in [0,T]\times \partial {\mathcal V}_\varepsilon({\mathcal S}_i) =[0,T]\times \Big(\partial {\mathcal V}_\varepsilon^{i,1}\cup \partial {\mathcal V}_\varepsilon^{i,2}\Big),\ b_\varepsilon(t,x)=b(t,x)$ and $b_\varepsilon $ is smooth (as ${\mathcal S}_i $ on ${\mathcal V}_\varepsilon({\mathcal S}_i) $). Thus $b_\varepsilon $ is at least $C^4$ in the space variable.
}

Of course we have that
$|(b-b_\varepsilon)(t,x)| \le C \I_{x\in {\mathcal V}_\varepsilon({\mathcal I})} $ which is not necessarily small. Anyhow, for all $q>1$, since the $(\mathcal S_i)_{i\in \leftB 1,m\rightB} $ are bounded, we derive as well:
\begin{equation}
\label{DIFF_LQ}
\|b-b_\varepsilon\|_{L^q([0,T]\times \R^d)}=\{\int_{0}^Tdt \int_{\R^d}|(b-b_\varepsilon)(t,x)|^q dx \}^{1/q}\le C\{ \int_0^T dt \int_{{\mathcal V_\varepsilon({\mathcal I})}} dx\}^{1/q}\le C_{\mathcal I}\varepsilon^{1/q} T^{1/q}.
\end{equation} 
Recall indeed that since the $({\mathcal S}_i)_{i\in \leftB 1,m\rightB} $ have zero Lebesgue measure and smooth boundary, for the \textit{thickened} neighborhoods $\big({\mathcal V}_\varepsilon({\mathcal S}_i)\big)_{i\in \leftB 1,m\rightB} $, we have for all $i\in \leftB 1,m\rightB, |{\mathcal V}_\varepsilon({\mathcal S}_i)|:=\int_{{\mathcal V}_\varepsilon({\mathcal S}_i)}dx\le C_{}\varepsilon $. This is clear for a bounded portion of hyperplane. The smoothness of the boundary allows to locally map ${\mathcal V}_\varepsilon({\mathcal S}_i)$ with a bounded neighborhood of a hyperplane if ${\mathcal S}_i$ has dimension $d-1$. For submanifolds of smaller dimension $d-i,\ i>1$, the straightening of the boundary can be done in the corresponding dimension $d-i+1 $ and the associated neighborhood would be smaller, namely $|{\mathcal V}_\varepsilon({\mathcal S}_i)|\le C\varepsilon^{d-(d-i)}\le C\varepsilon^i $. We take the \textit{worst} bound for simplicity.

Observe as well that the following control holds for the derivatives of the mollified coefficient. For all multi-index $\alpha, |\alpha|\le 4 $, there exists $C\ge 1
$ s.t. for all $(t,x)\in [0,T]\times \R^d $:
\begin{equation}
\label{THE_DER_BEPS_BETA}
|\partial_x^\alpha b_\varepsilon(t,x)|\le C\{ \varepsilon^{-|\alpha|} \I_{x\in {\mathcal V}_\varepsilon({\mathcal I})}+\I_{x\not \in {\mathcal V}_\varepsilon({\mathcal I})}  \}.
\end{equation}
Under the considered assumptions it is not necessary to mollify the diffusion coefficients. We thus set for all $(t,x)\in [0,T]\times \R^d,\ \sigma_\varepsilon(t,x)=\sigma(t,x) $, in order to keep homogeneous notations under our two running assumptions for the drift.

\subsection{Stability Results}
Recall now that under \textbf{(A${}_H$)} or \textbf{(A${}_{PS}$)} equation \eqref{1} admits a density (see e.g. \cite{sheu:91} under \textbf{(A${}_H $)} or Proposition 1 in \cite{kona:kozh:meno:15:1} under \textbf{(A${}_{PS}$)}), i.e. for all $0\le s<t\le T, \ x\in \R^d, B\in{\mathcal B}(\R^d), \P[X_t\in B|X_s=x]=\int_{B}p(s,t,x,y)dy $. The same holds for the Euler scheme in \eqref{EULER} (see e.g. Theorem 2.1 in \cite{lema:meno:10}), for all $0\le t_i<t_j\le T, x\in \R^d, \P[X_{t_j}^h\in B|X_{t_i}^h=x]=\int_{B}p^h(t_i,t_j,x,y)dy $. These properties remain valid for the respective perturbed diffusion and Euler scheme whose coefficients correspond to the procedures described in Section  \ref{SEC_HOLDER_MOL} and Section \ref{SEC_BORNE_MOL} depending on whether assumption \textbf{(A${}_H$)} or \textbf{(A${}_{PS}$)} is in force. We denote the densities associated with the perturbed diffusion and discretization scheme by  $p_\varepsilon $ and $p_\varepsilon^h $ respectively.

Let us now state the sensitivity result  following from Theorems 1 and 2 in \cite{kona:kozh:meno:15:1}.
\begin{THM}[Main Sensitivity Result]
 \label{MTHM_KKM1}
Define for $q\in (d,+\infty]$ and $\eta\in (0,1] $ the quantities:
 $$\Delta_{\varepsilon,b,q}:=\sup_{t\in [0,T]}\| (b-b_\varepsilon)(t,\cdot)\|_{L^q(\R^d)},\ \Delta_{\varepsilon,\sigma,\eta}:=\sup_{t\in [0,T]}\|\sigma(t,\cdot)-\sigma_\varepsilon(t,\cdot)\|_{L^\infty(\R^d)} +\sup_{t\in [0,T]}|\sigma(t,.)-\sigma_{\varepsilon}(t,.)|_{\eta}.$$
Set $\Delta_{\varepsilon,\eta,q}:=\Delta_{\varepsilon,b,q}+\Delta_{\varepsilon,\sigma,\eta}$. It holds under \A{A} that there exists $C_{ \eta,q}\ge 1 $ s.t.  for all $0\le t_i<t_j\le T$ and $(x,y)\in (\R^d)^2 $:
\begin{equation}
\label{CTR_D}
p_c(t_j-t_i,y-x)^{-1}\Big\{|(p-p_\varepsilon)(t_i,t_j,x,y)|+|(p^h-p_\varepsilon^h)(t_i,t_j,x,y)|\Big\}\le C_{\eta,q} \Delta_{\varepsilon,\eta,q}.
\end{equation}
Also, there exists $
C\ge 1$ s.t.:
\begin{equation}
\label{BD_C_ETA_Q}
\begin{split}
C_{\eta,q} 
\le C\exp(C( (\frac \eta 2\wedge \alpha(q))^{-1}+1)^{(\frac \eta 2\wedge \alpha(q))^{-1}+1 }),\ \alpha(q):=\frac 12(1-\frac dq).
\end{split}
\end{equation}
\end{THM}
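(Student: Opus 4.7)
The plan is to apply directly Theorems 1 and 2 of \cite{kona:kozh:meno:15:1} to the diffusion pair $(p, p_\varepsilon)$ and to the Euler pair $(p^h, p_\varepsilon^h)$ respectively. To this end I first check that the mollified coefficients $b_\varepsilon, \sigma_\varepsilon$ built in Sections \ref{SEC_HOLDER_MOL} and \ref{SEC_BORNE_MOL} satisfy \A{A1}-\A{A2} \emph{uniformly} in $\varepsilon$: the mollified drift remains bounded (convolution with a probability kernel), and the ellipticity bounds of $a$ in \A{A2} carry over to $a_\varepsilon = \sigma_\varepsilon \sigma_\varepsilon^*$ since spatial (and, under \A{A${}_H$}, time) convolution of a uniformly elliptic matrix remains uniformly elliptic. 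Consequently, two-sided Gaussian estimates for $p_\varepsilon$ and $p_\varepsilon^h$ hold with constants independent of $\varepsilon$, and the sensitivity framework of \cite{kona:kozh:meno:15:1} applies.

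Next, for the diffusion comparison, I would express the difference through the parametrix representation. Writing $p = \tilde p \otimes \sum_{r\ge 0} H^{(r)}$ and $p_\varepsilon = \tilde p_\varepsilon \otimes \sum_{r\ge 0} H_\varepsilon^{(r)}$ (with $\tilde p_\varepsilon, H_\varepsilon$ the Gaussian proxy and parametrix kernel associated with $(b_\varepsilon, \sigma_\varepsilon)$, frozen at the terminal point), a telescopic identity decomposes $p - p_\varepsilon$ into iterated time-space convolutions whose elementary building block is $(L_s - L_s^\varepsilon) \tilde p_\varepsilon(s,t,\cdot,y)$. Under \A{A2}, this action splits into a first order contribution involving $b - b_\varepsilon$ and a second order contribution involving $a - a_\varepsilon$. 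The former is handled through H\"older's inequality in $L^q(\R^d)$ against the Gaussian proxy, producing a time singularity $(t-s)^{\alpha(q) - 1/2}$ with $\alpha(q) = (1 - d/q)/2 > 0$ (whence the requirement $q > d$), multiplied by $\Delta_{\varepsilon,b,q}$. The latter exploits the cancellation of the second order operator on a constant coefficient Gaussian, which allows to factor $|a - a_\varepsilon|(s,z)$ through an increment $\le \Delta_{\varepsilon,\sigma,\eta} \, |z - y|^\eta$ of H\"older order $\eta$, yielding the singularity $(t-s)^{\eta/2 - 1}$.

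The Euler pair $(p^h, p_\varepsilon^h)$ is treated analogously via the discrete parametrix expansion for Markov chains of \cite{kona:mamm:00}, time integrals being replaced by Riemann sums over the grid $(t_i)$, with the same singularities. Summing both contributions yields \eqref{CTR_D}. The main technical obstacle is the tracking of the constant $C_{\eta,q}$ in \eqref{BD_C_ETA_Q}: each parametrix iteration improves the time regularity by the smallness exponent $\delta := \eta/2 \wedge \alpha(q)$, and the resulting series is of Mittag-Leffler type,
\begin{equation*}
\sum_{r \ge 0} \frac{(C T^\delta)^r}{\Gamma(1 + r\delta)}.
\end{equation*}
Standard Stirling-type asymptotics control this sum by $\exp\bigl(C(\delta^{-1} + 1)^{\delta^{-1} + 1}\bigr)$, which is exactly \eqref{BD_C_ETA_Q}. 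This doubly exponential blow-up as $\delta \to 0$ is the price to pay for the small smoothing exponents, and it will be crucial in the subsequent balance with the derivative explosion of the mollified heat kernels established in Section \ref{HK_ANA}.
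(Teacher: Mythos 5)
Your proposal is correct and follows essentially the same route as the paper: the bound \eqref{CTR_D} is obtained by invoking (here, re-sketching) the parametrix-based stability estimates of Theorems 1 and 2 in \cite{kona:kozh:meno:15:1}, with the drift difference handled in $L^q$ via H\"older's inequality (whence $q>d$ and $\alpha(q)$) and the diffusion difference via the $\eta$-H\"older modulus, while \eqref{BD_C_ETA_Q} comes from bounding the resulting Gamma-function series with $\theta=\frac\eta2\wedge\alpha(q)$ exactly as the paper does by grouping terms in blocks of length $\lceil\theta^{-1}\rceil$. The only minor imprecision is that your Mittag--Leffler series should carry the factor $[\Gamma(\theta)]^r\sim\theta^{-r}$ coming from the iterated Beta functions (and an $(r+1)$ prefactor), but this does not alter the final doubly exponential bound.
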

\begin{REM}[Constraint on $q$]
The constraint $q>d$ in the above result is due to the fact that to establish \eqref{CTR_D} in the case $q<+\infty $, we are led to control quantities of the type
$$Q:=\int_{0}^{t_j-t_i} dt\int_{\R^d} p_c(t,x-w)|b-b_\varepsilon|(t,w)\frac{1}{((t_j-t_i)-t)^{1/2}} p_c\big((t_j-t_i)-t,y-w\big) dw,$$
through H\"older's inequality.
The constraint $q>d$ naturally appears in order to derive $Q\le \bar C \frac{\Gamma(\alpha(q))}{\Gamma(1+\alpha(q))}T^{\alpha(q)}\|b-b_\varepsilon\|_{L^q(\R^d)} $ for a finite $\bar C:=\bar C(\A{A},T)\ge 1$.
We refer to the proof of Lemma 2 in \cite{kona:kozh:meno:15:1} for additional details.
\end{REM}
\begin{proof}
Equation \eqref{CTR_D} readily follows from Theorems 1 and 2 in \cite{kona:kozh:meno:15:1}.
The point is here to specify the control \eqref{BD_C_ETA_Q} on the constant appearing in \eqref{CTR_D}. Lemma 3 in \cite{kona:kozh:meno:15:1}, quantifies the explosive contributions for each term of the parametrix series giving the difference of the densities. It holds for both the diffusion and the Euler scheme, see Section 3.2 of \cite{kona:kozh:meno:15:1} for details, and yields: 
\begin{equation*}
\begin{split}
C_{\eta,q} \le \sum_{r \ge 1} (r+1)
 \frac{\bar C^{r+1}\left[ \Gamma (\frac{\eta }{2}\wedge \alpha(q))\right] ^{r}}{\Gamma
(1+r(\frac{\eta }{2}\wedge \alpha(q)) )} T^{r(\frac{ \eta}{2}\wedge \alpha(q)) },
\end{split}
\end{equation*}
for a constant $\bar C:=\bar C($(\textbf{A})$,T)$ which does not depend on $\eta $ or $q$. 

Introduce for $\theta\in (0,\frac 12]$ the quantity:
\begin{eqnarray*}
I(T,\theta):=\sum_{r\ge 1}(r+1) \frac{\bar C^{r+1}\left[ \Gamma ( \theta)\right] ^{r}}{\Gamma
(1+r\theta )} T^{r \theta}.
\end{eqnarray*}
%
One easily gets that for a given $T>0$, there exists $\tilde C:=\tilde C(\A{A},T)\ge 1$  independent of $\theta  $ as well such that:
\begin{equation*}
I(T,\theta) \le C\sum_{r \ge 1} (r+1)
 \frac{\tilde C^{r+1}\left[ \Gamma (\theta)\right] ^{r}}{\Gamma
(1+r \theta  )}.
\end{equation*}
Set now $r_0:=\lceil \frac 1{\theta}\rceil $ and write by monotonicity of the $\Gamma $ function (see e.g. formula 8.363 (8) in Gradstein and Ryzhik \cite{grad:ryzh:14}):
\begin{eqnarray*}
I(T,\theta)&\le& C \sum_{k\ge 0} (k+1)r_0\sum_{kr_0\le  r<(k+1)r_0}\frac{\{\tilde C\Gamma(\theta)\}^r}{\Gamma(1+k)}\notag\\
&\le& C \sum_{k\ge 0} \frac{(k+1) r_0}{\Gamma(k+1)}\sum_{kr_0\le  r<(k+1)r_0}\{\tilde C(\theta^{-1}+\exp(-1))\}^r\notag\\
&\le& C r_0^2 \sum_{k\ge 0} \frac{(k+1) }{\Gamma(k+1)}[\{\tilde C(\theta^{-1}+\exp(-1))\}^{r_0}]^{k+1}
\le 
C\exp(\tilde C( \theta^{-1}+1)^{\theta^{-1}+1 }).
\end{eqnarray*}
This gives \eqref{BD_C_ETA_Q} taking $\theta=\frac \eta 2\wedge \alpha(q) $ and completes the proof.
\end{proof}

From Theorem \ref{MTHM_KKM1}, we get the following key sensitivity results.
\begin{LEMME}[Sensitivity under \textbf{(A${}_{H}$)}]\label{LEMMA_SENS_HOLD}
Under Assumption \textbf{(A${}_{H}$)}, for $\eta\in (0,\gamma)$ there exists $c\le 1$ s.t. for all $0\le t_i<t_j\le T, (x,y)\in (\R^d)^2 $:
\begin{equation}
\label{CTR_SENSI_HOLDER}
|(p-p_\varepsilon)(t_i,t_j,x,y)|+|(p^h-p_\varepsilon^h)(t_i,t_j,x,y)|\le C_\eta \varepsilon^{\gamma-\eta} p_c(t_j-t_i,y-x),
\end{equation}
where $C_\eta:=C_{\eta,\infty}\le  C\exp(C( (\frac \eta 2)^{-1}+1)^{(\frac \eta 2)^{-1}+1 })$ for $C_{\eta,\infty} $ as in \eqref{BD_C_ETA_Q}.
\end{LEMME}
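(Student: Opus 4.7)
The overall strategy is to apply the general sensitivity result of Theorem \ref{MTHM_KKM1} with the specific choice $q=+\infty$, and then to plug in the sharp mollification estimates already recorded in Proposition \ref{PROP_FCMC}. In other words, the lemma should reduce to a direct book-keeping exercise: identify the relevant $\Delta_{\varepsilon,\eta,q}$, bound it by $\varepsilon^{\gamma-\eta}$, and track the $\eta$-dependence of the prefactor through \eqref{BD_C_ETA_Q}.

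First, I take $q=+\infty$, so that $\alpha(q)=1/2$ and $L^q(\R^d)$ coincides with $L^\infty(\R^d)$. With this choice, $\Delta_{\varepsilon,b,q}$ is the sup-norm of $b-b_\varepsilon$, for which the first line of \eqref{THE_FIRST_CTR_MC} yields $\Delta_{\varepsilon,b,\infty}\le C\varepsilon^\gamma$. The second sensitivity input $\Delta_{\varepsilon,\sigma,\eta}$ is, up to notation, exactly the quantity controlled in the second line of \eqref{THE_FIRST_CTR_MC} of Proposition \ref{PROP_FCMC}: it satisfies $\Delta_{\varepsilon,\sigma,\eta}\le C(\varepsilon^{\gamma}+\varepsilon^{\gamma-\eta})$ for every $\eta\in(0,\gamma)$. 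Since we may assume $\varepsilon\le 1$, the worst contribution is $\varepsilon^{\gamma-\eta}$, and summing the two pieces gives
\begin{equation*}
\Delta_{\varepsilon,\eta,\infty}=\Delta_{\varepsilon,b,\infty}+\Delta_{\varepsilon,\sigma,\eta}\le C\,\varepsilon^{\gamma-\eta}.
\end{equation*}

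Injecting this bound into \eqref{CTR_D} of Theorem \ref{MTHM_KKM1} directly produces the pointwise sensitivity estimate \eqref{CTR_SENSI_HOLDER} with prefactor $C_{\eta,\infty}\,\varepsilon^{\gamma-\eta}$. To conclude I only need the announced shape of $C_\eta=C_{\eta,\infty}$: since $\alpha(q)=1/2$ with $q=\infty$ and since we restrict to $\eta\in(0,\gamma)\subset(0,1)$, we have $\tfrac{\eta}{2}\wedge\alpha(q)=\tfrac{\eta}{2}$, so \eqref{BD_C_ETA_Q} specializes to
\begin{equation*}
C_{\eta,\infty}\le C\exp\!\bigl(C\bigl((\tfrac{\eta}{2})^{-1}+1\bigr)^{(\tfrac{\eta}{2})^{-1}+1}\bigr),
\end{equation*}
which is exactly the constant stated. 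There is, in effect, no real obstacle here; the only subtle point is to make sure that the H\"older exponent $\eta$ used to measure the spatial regularity of $\sigma-\sigma_\varepsilon$ in Theorem \ref{MTHM_KKM1} is taken strictly smaller than $\gamma$, since otherwise the mollification bound in \eqref{CTR_HOLDER_NORM} degenerates (the exponent $\gamma-\eta$ must remain positive so that the upper bound on $\Delta_{\varepsilon,\sigma,\eta}$ still tends to $0$ with $\varepsilon$), and to keep track of the fact that the explosion of $C_\eta$ as $\eta\downarrow 0$ is precisely the double-exponential one recorded in \eqref{BD_C_ETA_Q}, which later on will need to be balanced against the gain $\varepsilon^{\gamma-\eta}$ when optimizing in $\eta$ and $\varepsilon$.
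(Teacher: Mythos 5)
Your proposal is correct and is essentially identical to the paper's own (very terse) proof: invoke Theorem \ref{MTHM_KKM1} with $q=+\infty$, bound $\Delta_{\varepsilon,\eta,\infty}$ by $C\varepsilon^{\gamma-\eta}$ via Proposition \ref{PROP_FCMC}, and note that $\frac{\eta}{2}\wedge\alpha(\infty)=\frac{\eta}{2}$ to read off the constant from \eqref{BD_C_ETA_Q}. Nothing is missing.
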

\begin{proof}
The lemma derives from Theorem \ref{MTHM_KKM1} and Proposition \ref{PROP_FCMC}. The bound on $C_\eta $ follows observing as well that for $ \eta\in (0,\gamma),\ \frac \eta 2<\frac \gamma 2\le \frac 12=\alpha(\infty)$ so that $ \frac \eta 2\wedge \alpha(\infty)=\frac \eta 2$.
\end{proof}

\begin{LEMME}[Sensitivity under \textbf{(A${}_{PS}$)}] \label{LEMMA_SENS_BOUND}
Under Assumption \textbf{(A${}_{PS}$)}, for $q>d$ there exists $c\le 1$ s.t. for all $0\le t_i<t_j\le T, (x,y)\in (\R^d)^2 $:
\begin{equation}
\label{CTR_SENSI_BOUNDED}
|(p-p_\varepsilon)(t_i,t_j,x,y)|+|(p^h-p_\varepsilon^h)(t_i,t_j,x,y)|\le C_q \varepsilon^{1/q} p_c(t_j-t_i,y-x),
\end{equation}
where $C_q:=C_{1,q}\le  C\exp(C( \alpha(q)^{-1}+1)^{\alpha(q)^{-1}+1 })$ for $C_{ 1,q} $ as in \eqref{BD_C_ETA_Q}.
\end{LEMME}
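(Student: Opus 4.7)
The plan is to reduce Lemma 2 to a direct application of Theorem 1 (Main Sensitivity Result), with the appropriate choice of parameters dictated by the specific mollification used under $\mathbf{(A_{PS})}$. There is nothing genuinely new to prove; the task amounts to computing $\Delta_{\varepsilon,\eta,q}$ for this particular regularization and tracking the constant.

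First, I would exploit the structure of the mollification under $\mathbf{(A_{PS})}$: since the diffusion coefficient is already $C_b^{2,4}$, we set $\sigma_\varepsilon \equiv \sigma$, so that for every $\eta \in (0,1]$,
$$\Delta_{\varepsilon,\sigma,\eta} = \sup_{t\in[0,T]}\|(\sigma-\sigma_\varepsilon)(t,\cdot)\|_{L^\infty(\R^d)} + \sup_{t\in[0,T]}|(\sigma-\sigma_\varepsilon)(t,\cdot)|_\eta = 0.$$
I would then fix $\eta=1$ for the remainder of the argument (this is the convention matching $C_q:=C_{1,q}$).

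Next, I would estimate $\Delta_{\varepsilon,b,q}$. By construction $b_\varepsilon\equiv b$ outside $\mathcal{V}_\varepsilon(\mathcal{I})$, and both $b$ and $b_\varepsilon$ are uniformly bounded by $K_1$ thanks to $\mathbf{(A1)}$ and the explicit form of the smooth interpolation on $\mathcal{V}_\varepsilon(\mathcal{I})$. Together with the volume estimate $|\mathcal{V}_\varepsilon(\mathcal{I})|\le C_{\mathcal{I}}\,\varepsilon$ (already established in the paragraph following \eqref{DIFF_LQ}, since each $\mathcal{S}_i$ is a bounded smooth submanifold of dimension at most $d-1$), this yields, uniformly in $t\in[0,T]$,
$$\|(b-b_\varepsilon)(t,\cdot)\|_{L^q(\R^d)}^q \le (2K_1)^q\,|\mathcal{V}_\varepsilon(\mathcal{I})| \le C\,\varepsilon,$$
hence $\Delta_{\varepsilon,b,q}\le C\,\varepsilon^{1/q}$. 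Combined with the previous step, $\Delta_{\varepsilon,1,q}\le C\,\varepsilon^{1/q}$.

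I would then plug this into \eqref{CTR_D} of Theorem 1, which under $\mathbf{(A)}=\mathbf{(A_{PS})}$ directly gives
$$p_c(t_j-t_i,y-x)^{-1}\bigl\{|(p-p_\varepsilon)(t_i,t_j,x,y)|+|(p^h-p_\varepsilon^h)(t_i,t_j,x,y)|\bigr\}\le C_{1,q}\,\Delta_{\varepsilon,1,q}\le C_{1,q}\,C\,\varepsilon^{1/q},$$
which is the desired bound (absorbing the geometric constant $C$ into $C_q$). Finally, for the explicit control on $C_q=C_{1,q}$, I would invoke \eqref{BD_C_ETA_Q}: since $q>d$ we have $\alpha(q)=\tfrac12(1-d/q)\in(0,\tfrac12)$, and with $\eta=1$ the relevant exponent in \eqref{BD_C_ETA_Q} is $\tfrac{\eta}{2}\wedge\alpha(q)=\tfrac12\wedge\alpha(q)=\alpha(q)$, giving exactly $C_{1,q}\le C\exp\bigl(C(\alpha(q)^{-1}+1)^{\alpha(q)^{-1}+1}\bigr)$.

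No real obstacle arises: the only care needed is to notice that choosing $\eta=1$ (rather than some $\eta<\gamma$ as in the H\"older case of Lemma 1) is legitimate precisely because $\sigma$ is smooth under $\mathbf{(A_{PS})}$, so the $\sigma$-part of $\Delta_{\varepsilon,\eta,q}$ vanishes and imposes no upper bound on $\eta$; the minimum $\tfrac{\eta}{2}\wedge\alpha(q)$ is then automatically realized by $\alpha(q)$, which is the sharpest exponent one can hope for when measuring drift perturbations in $L^q$.
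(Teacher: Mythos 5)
Your proposal is correct and follows essentially the same route as the paper: observe that $\sigma_\varepsilon=\sigma$ under \textbf{(A${}_{PS}$)} so the $\sigma$-part of $\Delta_{\varepsilon,\eta,q}$ vanishes, bound $\Delta_{\varepsilon,b,q}\le C\varepsilon^{1/q}$ via the volume estimate for ${\mathcal V}_\varepsilon({\mathcal I})$ as in \eqref{DIFF_LQ}, apply Theorem \ref{MTHM_KKM1}, and note $\frac12\wedge\alpha(q)=\alpha(q)$ for the constant. Your write-up is merely a more detailed version of the paper's (very terse) argument.
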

\begin{proof}
Recall that under \A{A${}_{PS} $}, since the diffusion coefficient is smooth, there is no need to regularize it and $\sigma=\sigma_\varepsilon $. Thus,  $\Delta_{\varepsilon,\sigma,1}=0 $. From this observation and equation \eqref{DIFF_LQ}, Theorem \ref{MTHM_KKM1} then yields \eqref{CTR_SENSI_BOUNDED}. The bound on $C_q $ follows observing as well that for $q\in (d,+\infty), \frac 1 2\wedge \alpha(q)=\alpha(q)$.
\end{proof}

Let us mention that the constants $C_\eta, C_q$ in equations \eqref{CTR_SENSI_HOLDER} and \eqref{CTR_SENSI_BOUNDED} respectively explode when $\eta $ goes to 0 and $q $ goes to $d$, which is precisely what we want in order to have the fastest convergence rate w.r.t. $\varepsilon $. On the other hand, the explosion rates that we have emphasized in \eqref{BD_C_ETA_Q} are crucial in order to equilibrate the global errors. This step is performed in Section \ref{SEC_EUL_ANA} below.

\mysection{Error Analysis and Derivation of the Main Results}
\label{HK_ANA}

\subsection{Stream Line to the Proofs of the Main Results.}
This Section is devoted to the proof of Theorems \ref{THM_HOLDER_ES} and \ref{THM_HOLDER_ES_PSD}.

Our main results are 
those controlling the difference of the densities, i.e.  the estimates given in equations \eqref{ERR_EUL} under \textbf{(A${}_H $)} and \eqref{ERR_EUL_PS_SD}, \eqref{ERR_EUL_PS_SD_PART} under \textbf{(A${}_{PS}$)}.

To obtain these bounds, the strategy is the following. Let $0\le t_i<t_j\le T $ and $(x,y)\in (\R^d)^2 $ be given.
One writes for $\varepsilon>0$:
\begin{equation}
\label{DECOUP_ERR} 
|p(t_i,t_j,x,y)-p^h(t_i,t_j,x,y)|\le 
|p-p_{ \varepsilon}|(t_i,t_j,x,y)+|p_{\varepsilon}-p_{\varepsilon}^h|(t_i,t_j,x,y)
+ |p_{\varepsilon}^h-p^h|(t_i,t_j,x,y).
\end{equation}
Now, one derives from the sensitivity Lemma \ref{LEMMA_SENS_HOLD} that, under \textbf{(A${}_{H}$)}, for all $\eta\in (0,\gamma) $:
\begin{eqnarray}
\label{BD_PREAL_EUL_HOLD}
|p(t_i,t_j,x,y)-p^h(t_i,t_j,x,y)| \le  C_\eta \varepsilon^{\gamma-\eta}p_c(t_j-t_i,y-x)+|(p_{\varepsilon}-p_{\varepsilon}^h)|(t_i,t_j,x,y).
\end{eqnarray}
Similarly, Lemma \ref{LEMMA_SENS_BOUND} yields that, under \textbf{(A${}_{PS}$)}, for all $q>d$:
\begin{eqnarray}
\label{BD_PREAL_EUL_BOUND}
|p(t_i,t_j,x,y)-p^h(t_i,t_j,x,y)| \le  C_q 
\varepsilon^{1/q}p_c(t_j-t_i,y-x)+|(p_{\varepsilon}-p_{\varepsilon}^h)|(t_i,t_j,x,y).
\end{eqnarray}

To investigate and minimize the contributions in the error it thus remains from equations \eqref{BD_PREAL_EUL_HOLD} and \eqref{BD_PREAL_EUL_BOUND} to precisely control the difference $|p_\varepsilon-p_\varepsilon^h| $ in \eqref{DECOUP_ERR}. Let us now recall that, since the densities $p_{\varepsilon},p_{\varepsilon}^h $ are now respectively associated with a diffusion process and its Euler scheme with \textit{smooth} coefficients, they can be compared thanks to the results in \cite{kona:mamm:02} adapted to the current inhomogeneous setting.  We thus have that:
\begin{equation}
\label{ERR_F_1}
|(p_\varepsilon-p_\varepsilon^h)(t_i,t_j,x,y)|\le C_{b_\varepsilon,\sigma_\varepsilon} h p_c(t_j-t_i,y-x),
\end{equation}
where $C_{b_\varepsilon,\sigma_\varepsilon} $ depends on the derivatives of $b_\varepsilon,\sigma_\varepsilon $ and therefore explodes when $\varepsilon$ goes to 0.

The delicate and crucial point is that we must here precisely quantify this explosion. A key ingredient, to proceed is the \textit{parametrix} series representation for the densities of the diffusion and its Euler scheme. These aspects are recalled in Section \ref{PARAM} below.

Importantly, the parametrix expansion of the density of $X_{t_j}^{t_i,x}$ in \eqref{1}, i.e. for the equation without mollified coefficients, also directly allows to derive, without any sensitivity analysis, exploiting the controls on the derivatives of the density $p(t_i,t_j,x,\cdot) $ of $X_{t_j}^{t_i,x} $ w.r.t. $x$ up to order 2 under \A{A${}_H $}, the bounds in \eqref{WEAK_HOLDER} and \eqref{HOLDER_BOREL}. The arguments follow from \textit{cancellation} techniques that are also crucial to derive our main estimates. We first illustrate this approach in Section \ref{ERR_FAIBLE_INT} which is dedicated to the proof of \eqref{WEAK_HOLDER} and \eqref{HOLDER_BOREL} (integrated weak error).

The main results corresponding to the controls of the difference of the densities  are established in Section \ref{SEC_EUL_ANA}. 
As emphasized above, these results do rely on the sensitivity analysis. They also require a careful analysis of the explosions of the higher order derivatives of the involved heat kernels which need to be quantitatively controlled in terms of the corresponding regularization procedure. The main result in that direction is Proposition \ref{THE_FINAL_PROP} below whose proof, which heavily exploits cancellation techniques, is postponed to Section \ref{SEC_PROOF_ANA}. It yields a precise control of the constant $C_{b_\varepsilon,\sigma_\varepsilon} $ in \eqref{ERR_F_1}. The main results of Theorems \ref{THM_HOLDER_ES} and \ref{THM_HOLDER_ES_PSD} are then derived in Section \ref{SEC_PROOF_THM_1} and \ref{SEC_PROOF_AS} respectively,
balancing the errors appearing in \eqref{ERR_F_1}, and \eqref{BD_PREAL_EUL_HOLD} under \A{A${}_H $} or \eqref{BD_PREAL_EUL_BOUND} under \A{A${}_{PS} $}.

\subsection{Parametrix Representation of Densities.}
\label{PARAM}

From Section 2 in \cite{kona:kozh:meno:15:1}, we derive that under \textbf{(A)} (i.e. the expansions below hold under both \textbf{(A${}_H$)} and \textbf{(A${}_{PS}$)}), for all $\varepsilon\ge 0 $ (the expansion below even holds for the initial coefficients taking $\varepsilon=0 $), $0\le s<t\le T,\ (x,y)\in (\R^d)^2 $:
\begin{equation}
\label{SERIE_P_EPS}
 p_{\varepsilon}(s,t,x,y):=\sum_{r\in \N} \tilde p_{\varepsilon} \otimes H_{\varepsilon}^{(r)}(s,t,x,y),
\end{equation} 
where for $0\le u<t\le T, (z,y)\in (\R^d)^2$:
\begin{equation}
\label{PARAM_KER}
H_\varepsilon(u,t,z,y):=(L_u^\varepsilon-\tilde L_u^{\varepsilon,y})\tilde p_\varepsilon(u,t,z,y),
\end{equation}
and $L_u^\varepsilon,\tilde L_u^{\varepsilon,y} $ respectively stand for the generators at time $u$ of the processes 
\begin{eqnarray}
X_t^{(\varepsilon)}=z+\int_u^t b_\varepsilon(v,X_v^{(\varepsilon)})dv+\int_u^t \sigma_\varepsilon(v,X_v^{(\varepsilon)})dW_v,
\tilde X_t^{(\varepsilon),y}= z 
+\int_u^t \sigma_\varepsilon(v,y)dW_v,
\label{DEF_PROC_G}
\end{eqnarray}
i.e. for all $\varphi\in C^2(\R^d,\R),\ x\in \R^d $,
\begin{eqnarray*}
L_u^\varepsilon \varphi(x)=\langle b_\varepsilon(u,x),\nabla_x \varphi(x)\rangle+\frac 12 {\rm Tr}\Big(\sigma_\varepsilon\sigma_\varepsilon^*(u,x) D_x^2 \varphi(x) \Big),\
\tilde L_u^{\varepsilon,y} \varphi(x)=
\frac 12 {\rm Tr}\Big(\sigma_\varepsilon\sigma_\varepsilon^*(u,y) D_x^2 \varphi(x) \Big).
\end{eqnarray*}
Also $\tilde p_\varepsilon(u,t,z,y):=\tilde p_\varepsilon^y(u,t,z,w)|_{w=y} $ where $\tilde p_\varepsilon^y(u,t,z,.) $ stands for the density at  time $t$ of the process $\tilde X^{(\varepsilon),y} $ starting from $z$ at time $u$. We denote in \eqref{SERIE_P_EPS}, $\tilde p_\varepsilon\otimes H_\varepsilon^{(0)}(s,t,x,y)=\tilde p_\varepsilon(s,t,x,y)$ and for all $r\ge 1,\ \tilde p_\varepsilon\otimes H_\varepsilon^{(r)}(s,t,x,y)= \int_{s}^t du\int_{\R^d}\tilde p_\varepsilon(s,u,x,z)H_\varepsilon^{(r)}(u,t,z,y)dz$ where for $r\ge 2$, $ H_\varepsilon^{(r)}(u,t,z,y):=H_\varepsilon\otimes H_\varepsilon^{(r-1)}(u,t,z,y):=\int_{u}^{t} dv\int_{\R^d}H_\varepsilon(u,v,z,w)H_\varepsilon^{(r-1)}(v,t,w,y) dw$. More generally, the symbol $\otimes$ stands for the time-space convolution, i.e. for two  real valued functions $f,g$ defined on $[0,T]^2\times (\R^d)^2 $, $0\le s<t\le T, f\otimes g(s,t,x,y):=\int_s^t du\int_{\R^d} f(s,u,x,z)g(u,t,z,y)dz $. We also recall that under \A{A${}_{PS}$}, since the diffusion coefficient is smooth we do not regularize it and denote in this case $\sigma_\varepsilon=\sigma $.

To investigate the contribution $p_\varepsilon-p_\varepsilon^h $ in \eqref{DECOUP_ERR}
we will also use for $0\le t_i<t_j\le T,(x,y)\in (\R^d)^2 $ the function:
\begin{equation}
\label{SERIE_P_EPS_D}
p_{\varepsilon}^d(t_i,t_j,x,y):=\sum_{r\in \N} \tilde p_{\varepsilon} \otimes_h H_{\varepsilon}^{(r)}(t_i,t_j,x,y),
\end{equation}
where the quantities at hand are the same as above and the symbol $\otimes_h $ replacing the $\otimes $ in \eqref{SERIE_P_EPS} denotes the \textit{discrete} convolution, i.e. for all $r\ge 1$,
$$ \tilde p_\varepsilon\otimes_h H_\varepsilon^{(r)}(t_i,t_j,x,y)= h \sum_{k=0}^{j-i-1}\int_{\R^d}\tilde p_\varepsilon(t_i,t_{i+k},x,z)H_\varepsilon^{(r)}(t_{i+k},t_j,z,y)dz.$$
Even though $p_{\varepsilon}^d(t_i,t_j,x,.) $ is not \textit{a priori} a density, we will call it so with a slight abuse of terminology. An important control, under \textbf{(A)}, for the terms in the parametrix series  is the following:
\begin{eqnarray}
\forall 0\le s<t\le T,\  |\tilde{p}_\varepsilon \otimes H_\varepsilon^{(r)}(s,t,x,y)|  
\leq  \frac{((1\vee T^{(1-\gamma)/2})c_1)^{r+1}\left[ \Gamma (\frac{\gamma }{2})\right] ^{r}}{\Gamma
(1+r\frac{\gamma }{2})}p_{c}(t-s,y-x) (t-s)^{\frac{r \gamma}{2}},  \nonumber\\
\forall 0\le t_i<t_j\le T,\ |\tilde{p}_\varepsilon \otimes_h H_\varepsilon^{(r)}(t_i,t_j,x,y)|\le \frac{((1\vee T^{(1-\gamma)/2})c_1)^{r+1}\left[ \Gamma (\frac{\gamma }{2})\right] ^{r}}{\Gamma
(1+r\frac{\gamma }{2})}p_{c}(t_j-t_i,y-x) (t_j-t_i)^{\frac{r \gamma}{2}},
\label{10}
\end{eqnarray}  
taking $\gamma=1 $ under \A{A${}_{PS} $}.
We emphasize that those bounds are uniform w.r.t. $\varepsilon\ge 0 $ and refer to \cite{kona:mamm:02} or Section 2 in \cite{kona:kozh:meno:15:1} for a proof.

From the same references (see also Lemma 3.6 in \cite{kona:mamm:00}), we have that the density of the Euler scheme also admits a similar \textit{parametrix} representation.
Introduce for $0\le t_i <t_k\le T, (z,y)\in (\R^d)^2$, the schemes:
\begin{eqnarray}
\label{SCHEMES_AND_PARAM}
X_{t_{k}}^{h,(\varepsilon)}&=&z+\sum_{l=i}^{k-1}\big( b_\varepsilon(t_l,X_{t_l}^{h,(\varepsilon)}) h+\sigma_\varepsilon(t_l,X_{t_l}^{h,(\varepsilon)}) (W_{t_{l+1}}-W_{t_l}) \big),\nonumber\\
\tilde X_{t_{k}}^{h,(\varepsilon),y}&
=&
z+\sum_{l=i}^{k-1} 
\sigma_\varepsilon(t_l,y) (W_{t_{l+1}}-W_{t_l}). 
\end{eqnarray}
Viewed as Markov Chains, their generators write
for all $\varphi \in C^2(\R^d,\R),\ x\in \R^d $:
\begin{eqnarray*}
L_{t_i}^{h,\varepsilon}\varphi(x):=h^{-1}\E[\varphi(X_{t_{i+1}}^{h,(\varepsilon),t_i,x})-\varphi(x)],
\tilde L_{t_i}^{h,\varepsilon,y}\varphi(x)= h^{-1}\E[\varphi(\tilde X_{t_{i+1}}^{h,(\varepsilon),y, t_i,x})-\varphi(x)].
\end{eqnarray*}
Define now for $0\le t_i < t_j\le T, (z,y) \in (\R^d)^2$ the \textit{Markov chain} analogue of the parametrix kernel $H$ in \eqref{PARAM_KER} by:
$$H_{\varepsilon}^h (t_i,t_j,z,y):=(L_{t_i}^{h,\varepsilon}-\tilde L_{t_i}^{h,\varepsilon,y})\tilde p_\varepsilon^h(t_i+h,t_j,x,y).$$
One gets the following parametrix representation for the density of the Euler scheme:
\begin{equation}
\label{SERIE_P_EPS_EULER}  
 p_{\varepsilon}^h(t_i,t_j,x,y):=\sum_{r=0}^{j-i} \tilde p_{\varepsilon} \otimes_h H_{\varepsilon}^{h,(r)}(t_i,t_j,x,y).
\end{equation}
 Again, the subscript $\varepsilon$ is meant to explicitly express the dependence on the mollified coefficients. Also, the terms in the above series satisfy the controls of equation \eqref{10} uniformly in $\varepsilon\ge 0 $.

\subsection{Integrated Weak Error under \textbf{(A${}_H$)}.}
\label{ERR_FAIBLE_INT}

We first prove the  statements concerning the integrated weak error in \eqref{WEAK_HOLDER} and \eqref{HOLDER_BOREL}. We insist that, in that case, no regularization of the coefficients is needed. We have the following result:
\begin{PROP}[Controls of the Derivatives.]\label{PROP_HK_BASE}
Let $T>0$ be fixed.
Under \textbf{(A${}_H $)}, there exist constants $C\ge 1,\ c\in(0,1]$ s.t. for all $0\le s<t\le T, (x,y)\in(\R^d)^2 $ and all multi-index $\alpha,\ |\alpha|\le 2 $:
\begin{equation}
\label{CTR_HK_BASE}
|D_x^\alpha p(s,t,x,y)|\le \frac{C}{(t-s)^{|\alpha|/2}}p_c(t-s,x-y).
\end{equation}
As a consequence we also derive that for $t_j=jh \in [0,T] $ being fixed and setting for all $(t,x)\in [0,t_j]\times \R^d $, $v(t,x):=\E[f(X_{t_j}^{t,x})] $, as soon as $f$ is bounded, we have that for all $(t,x)\in [0,t_j)\times \R^d $:
\begin{equation}
\label{CTR_GRAD}
|\nabla _x v(t,x) |\le \frac{C}{(t_j-t)^{1/2}}
\end{equation}
and for 
$ f\in C^\beta(\R^d,\R), \ \beta \in (0,1]$ (space of globally, and possibly unbounded, H\"older continuous functions), we have for a multi-index $\alpha,\ |\alpha|\le2 $ and all $(t,x)\in [0,t_j)\times \R^d $: 
\begin{equation}
\label{CTR_GRAD_EXPLO}
|D_x^\alpha v(t,x)|\le \frac{C}{(t_j-t)^{(|\alpha|-\beta)/2}}.
\end{equation} 
\end{PROP}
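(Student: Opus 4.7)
The plan is to exploit the parametrix series representation \eqref{SERIE_P_EPS} (taken at $\varepsilon=0$, which is legitimate since the expansion holds under \textbf{(A)}), namely $p(s,t,x,y)=\sum_{r\ge 0}\tilde p\otimes H^{(r)}(s,t,x,y)$, and differentiate termwise. For $r=0$, the Gaussian form of $\tilde p(s,t,x,y)=\tilde p^{y}(s,t,x,z)|_{z=y}$ gives directly $|D_x^\alpha \tilde p(s,t,x,y)|\le C(t-s)^{-|\alpha|/2}p_c(t-s,y-x)$ for $|\alpha|\le 2$. For $r\ge 1$, one differentiates under the convolution, using that only the first factor $\tilde p(s,u,x,z)$ depends on $x$. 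Combined with the standard bound $|H(u,t,z,y)|\le C(t-u)^{-1+\gamma/2}p_c(t-u,y-z)$ (obtained by plugging the H\"older control of $a,b$ into $(L_u-\tilde L_u^y)\tilde p$), a Beta-function convolution argument handles $|\alpha|=1$ cleanly, since $\int_s^t (u-s)^{-1/2}(t-u)^{-1+\gamma/2}du=B(1/2,\gamma/2)(t-s)^{(\gamma-1)/2}$ is finite.

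The main obstacle is the case $|\alpha|=2$, where the naive bound $\int_s^t(u-s)^{-1}(t-u)^{-1+\gamma/2}du$ diverges at $u=s$. To handle this I would split the integration on $[s,(s+t)/2]$ and $[(s+t)/2,t]$. On the second half, $(u-s)\ge (t-s)/2$ removes the singular factor. On the first half, I would introduce a \emph{cancellation}: since the frozen density $\tilde p^{z}(s,u,x,\cdot)$ integrates to $1$ on $\R^d$ for every freezing $z$, centering adequately the kernel $H^{(r)}(u,t,z,y)$ around $z=x$ (or exploiting that at second order $\int D_x^\alpha \tilde p^{y}(s,u,x,z)dz=0$ with the freezing taken at the appropriate point) and then using the $\gamma$-H\"older regularity of $H^{(r)}$ in its $z$ variable (which follows from iterating the control on $H$ and using that $\tilde p$ has H\"older regularity in its arguments through the H\"older dependence of $\sigma$ on the freezing point), one replaces the $(u-s)^{-1}$ singularity by $(u-s)^{-1+\gamma/2}$, which is integrable. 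Summing the resulting bounds and using the factorial decay from \eqref{10} (more precisely the Gamma-function Mittag–Leffler structure) yields a convergent series and the announced estimate \eqref{CTR_HK_BASE}.

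The value function bounds \eqref{CTR_GRAD} and \eqref{CTR_GRAD_EXPLO} then follow from the identity $v(t,x)=\int_{\R^d}p(t,t_j,x,y)f(y)dy$ by differentiating under the integral. For bounded $f$, applying \eqref{CTR_HK_BASE} with $|\alpha|=1$ directly gives $|\nabla_x v(t,x)|\le \|f\|_\infty \int (t_j-t)^{-1/2}p_c(t_j-t,y-x)dy\le C(t_j-t)^{-1/2}$. For possibly unbounded $f\in C^\beta(\R^d,\R)$, one uses the crucial cancellation $\int_{\R^d} D_x^\alpha p(t,t_j,x,y)dy=D_x^\alpha \bigl(\int p(t,t_j,x,y)dy\bigr)=D_x^\alpha(1)=0$ for $|\alpha|\ge 1$, which allows to write
\begin{equation*}
D_x^\alpha v(t,x)=\int_{\R^d} D_x^\alpha p(t,t_j,x,y)\,[f(y)-f(x)]\,dy,
\end{equation*}
and the $\beta$-H\"older bound $|f(y)-f(x)|\le [f]_\beta |y-x|^\beta$ combined with \eqref{CTR_HK_BASE} and the Gaussian moment $\int p_c(t_j-t,y-x)|y-x|^\beta dy\le C(t_j-t)^{\beta/2}$ gives the claimed $(t_j-t)^{-(|\alpha|-\beta)/2}$ behavior.
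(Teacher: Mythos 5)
Your proposal is correct and follows essentially the same route as the paper: estimate \eqref{CTR_HK_BASE} is obtained there by specializing the parametrix argument of Proposition \ref{THE_FINAL_PROP} (termwise differentiation of the series, splitting the time integral at $(s+t)/2$, the cancellation $\int_{\R^d}D_x^\alpha\tilde p^{w}(s,u,x,z)\,dz=0$ for $|\alpha|=2$, and the spatial H\"older continuity of the iterated kernel — the paper in fact only establishes exponent $\gamma/2$ for the kernel at the price of a $(t-u)^{-1+\gamma/4}$ singularity, which still integrates), and the value-function bounds are derived exactly as you do, via $\int_{\R^d}D_x^\alpha p(t,t_j,x,y)\,dy=0$ and the $\beta$-H\"older modulus of $f$.
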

\begin{proof}
Equation \eqref{CTR_HK_BASE} is a direct consequence of 
Proposition \ref{THE_FINAL_PROP} below. This estimate readily gives \eqref{CTR_GRAD}.
On the other hand, we get that for 
$ f\in C^\beta(\R^d,\R), \ \beta \in (0,1]$, we have for a multi-index $\alpha,\ |\alpha|\le 2, (t,x)\in [0,t_j)\times \R^d $:
\begin{equation*}
\label{FIRST_CANC}
D_x^\alpha v(t,x)= \int_{\R^d}D_x^\alpha p(t,t_j,x,y) f(y) dy= \int_{\R^d}D_x^\alpha p(t,t_j,x,y) (f(y)-f(x)) dy,
\end{equation*} 
recalling that $D_x^\alpha \int_{\R^d }p(t,t_j,x,y) dy =0 $ for the last identity. This is precisely what we call a \textit{cancellation} technique. It allows here to exploit the spatial H\"older continuity of $f$ to get rid of the time singularity appearing in \eqref{CTR_HK_BASE} when $|\alpha|=2 $, or to decrease the time singularity appearing in \eqref{CTR_GRAD}. Hence, from \eqref{CTR_HK_BASE}:
$$|D_x^\alpha v(t,x)|\le  \frac{C|f|_\beta}{(t_j-t)^{(|\alpha|-\beta)/2}}\int_{\R^d}p_c(t_j-t,y-x) \Big(\frac{|x-y|}{(t_j-t)^{1/2}} \Big)^{\beta}dy.$$
 Equation \eqref{CTR_GRAD_EXPLO} readily follows.
Similar operations will be recurrent in the proof of Proposition \ref{THE_FINAL_PROP}.
\end{proof}

\subsubsection{Proof of \eqref{WEAK_HOLDER}: H\"older final test function.}
Set $t_{\beta,\gamma}^h:=\sup\{(t_k)_{k\in \leftB 0,j \rightB}: t_k\le t_j-h^{\gamma/\beta}<t_{k+1} \} $ and $I_{\beta,\gamma}^h:=t_{\beta,\gamma}^h/h$. In particular, if $ \gamma\ge \beta$, $t_{\beta,\gamma}^h=t_{j-1} $ and if $\beta>\gamma $, $t_{\beta,\gamma}^h<t_{j-1} $.

Let $v$ be the function defined in Proposition \ref{PROP_HK_BASE}. It follows from Proposition \ref{THE_FINAL_PROP} that $v\in C^{\beta/2,\beta}([0,t_j]\times \R^d,\R)\cap C^{1,2}([0,t_j)\times \R^d,\R) $.
An expansion similar to \eqref{DECOUP_ERR_EDP} yields:
\begin{eqnarray}
\label{THE_EQ_HOLDER_LAST_STEP}
|\E[f(X_{t_j}^{h,t_i,x})-f(X_{t_j}^{t_i,x})]|\le  |\E[f(X_{t_j}^{h,t_i,x})-v(t_{\beta,\gamma}^h,X_{t_{\beta,\gamma}^h}^{h,t_i,x})]|\nonumber\\
+C \sum_{k=i}^{I_{\beta,\gamma}^h-1}\int_{t_k}^{t_{k+1}} ds \E[\{|\nabla_x v(s,X_s^{h,t_i,x})|+|D_x^2 v(s,X_s^{h,t_i,x})|\} \{ |s-t_k|^{\gamma/2}+|X_s^{h,t_i,x}-X_{t_k}^{h,t_i,x}|^\gamma\}]\nonumber\\
=:(T_L+T_M)(h,t_i,t_j,x),
\end{eqnarray}
where $T_L $ stands for the contribution associated with the last step(s) and $T_M $ for the other \textit{main} steps.

From equation \eqref{CTR_GRAD_EXPLO} in Proposition \ref{PROP_HK_BASE}, one readily gets:
\begin{equation}
\label{CTR_TM}
T_M(h,t_i,t_j,x)\le Ch^{\gamma/2}\sum_{k=i}^{I_{\beta,\gamma}^h-1}\int_{t_k}^{t_{k+1}} (1+\frac{1}{(t_j-s)^{1-\beta/2}})ds\le Ch^{\gamma/2}.
\end{equation}
The contribution $T_L $ requires a more careful treatment. Let us write:
\begin{eqnarray}
T_L(h,t_i,t_j,x) &\le&   \E[|f(X_{t_j}^{h,t_i,x})-f(X_{t_j-h^{\gamma/\beta}}^{h,t_i,x}) |]+\E[|v(t_{j}, X_{t_j-h^{\gamma/\beta}}^{h,t_i,x})-v(t_{j}-h^{\gamma/\beta}, X_{t_j-h^{\gamma/\beta}}^{h,t_i,x})|]\nonumber\\
&&+\big|\E[v(t_{j}-h^{\gamma/\beta}, X_{t_j-h^{\gamma/\beta}}^{h,t_i,x})-v(t_{\beta,\gamma}^h,X_{t_{\beta,\gamma}^h}^{h,t_i,x})]\big|\nonumber\\
&\le&   \E[|f(X_{t_j}^{h,t_i,x})-f(X_{t_j-h^{\gamma/\beta}}^{h,t_i,x}) |]+\E[|v(t_{j}, X_{t_j-h^{\gamma/\beta}}^{h,t_i,x})-v(t_{j}-h^{\gamma/\beta}, X_{t_j-h^{\gamma/\beta}}^{h,t_i,x})|]\nonumber\\
&&+\int_{t_{\beta,\gamma}^h}^{t_j-h^{\gamma/\beta}}\Big\{\E[ |\nabla_x v(s,X_s^{h,t_i,x})||b(s,X_s^{h,t_i,x})-b(\phi(s),X_{\phi(s)}^{h,t_i,x})|\nonumber\\
&& +\frac 12|D_x^2 v(s,X_s^{h,t_i,x})||a(s,X_s^{h,t_i,x})-a(\phi(s),X_{\phi(s)}^{h,t_i,x})|]\Big\}ds\nonumber\\
&\le&  C \E[|X_{t_j}^{h,t_i,x}-X_{t_j-h^{\gamma/\beta}}^{h,t_i,x} |^\beta]+Ch^{\gamma/2}\int_{t_{\beta,\gamma}^h}^{t_{j}-h^{\gamma/\beta}} (1+\frac{1}{(t_j-s)^{1-\beta/2}})ds
\le Ch^{\gamma/2},\nonumber\\
\label{CTR_TL}
\end{eqnarray}
expanding as in \eqref{DECOUP_ERR_EDP} the term $|\E[v(t_{j}-h^{\gamma/\beta}, X_{t_j-h^{\gamma/\beta}}^{h,t_i,x})-v(t_{\beta,\gamma}^h,X_{t_{\beta,\gamma}^h}^{h,t_i,x})]| $ with It\^o's formula and using \eqref{FLAVOUR_STRONG} for the last two inequalities. Plugging  \eqref{CTR_TL} and \eqref{CTR_TM} into \eqref{THE_EQ_HOLDER_LAST_STEP} yields the required control in  \eqref{WEAK_HOLDER}. 
 
 \begin{REM}[Extensions to functions $f$ with subquadratic exponential growth]\label{RQ_UNBOUNDED}
 We stated \eqref{WEAK_HOLDER} for $f\in C^\beta (\R^d,\R)$ for simplicity. Observe anyhow that the above arguments can be adapted to derive the expected convergence rate as soon as $f$ is locally $\beta $-H\"older and satisfies the growth condition:
 \begin{equation}
 \label{GR_LOC}
\begin{split}
 \exists C_0>0, \forall x\in \R^d,\ |f(x)|\le C_0\exp(c_0|x|^2), \  c_0\le \frac{c}{4T},\\
 \forall (x,y)\in (\R^d)^2,\  |x-y|\le 1, |f(x)-f(y)|\le C_0|x-y|^\beta \exp(c_0|x|^2),
 \end{split}
 \end{equation} 
 where $c$ is as in equation \eqref{CTR_HK_BASE}. In that case, the controls of equations \eqref{CTR_GRAD} and \eqref{CTR_GRAD_EXPLO} would write in the following way. There exists a constant $C\ge 1$ s.t. for all $(t,x)\in [0,t_j)\times \R^d $:
 \begin{eqnarray}
\label{BD_GR_EXPLO}
|\nabla v(t,x)|&\le& \frac{C}{(t_j-t)^{1/2}}\int_{\R^d}p_c(t_j-t,y-x)\exp(c_0|y|^2) dy, \nonumber\\
&\le &\frac{C\exp(2c_0|x|^2)}{(t_j-t)^{1/2}}\int_{\R^d}p_c(t_j-t,y-x)\exp(2c_0|y-x|^2) dy\le \frac{C\exp(2 c_0 |x|^2)}{(t_j-t)^{1/2}}, \nonumber\\
\forall \alpha,\ |\alpha|=2,\ |D_x^\alpha v(t,x)|&\le& \frac{C\exp(2 c_0 |x|^2)}{(t_j-t)^{1- \beta/2}} .
 \end{eqnarray}
 Plugging \eqref{BD_GR_EXPLO} into \eqref{THE_EQ_HOLDER_LAST_STEP} and \eqref{CTR_TL} still yields, thanks to the condition on $ c_0$ in \eqref{GR_LOC} and \eqref{Growth}, an integrable contribution.
\end{REM}
 
 \subsubsection{Proof of \eqref{HOLDER_BOREL}: Indicator of a Domain as Test Function.}
 \label{SUBSEC_HODLER_BOREL}
 We have assumed $ A$ to be  $ C^2$ domain and $\partial A $ bounded. Let us denote by $d_S(\cdot,\partial A)$ the \textbf{signed} distance to the boundary, i.e. $d(x,\partial A) > 0$ for $x\in  A$ and $d(x,\partial A) \le 0$ for $x\not \in  A$.
 
  It is known (see e.g. Lemma 14.16 and its proof p. 355 in \cite{gilb:trud:98}) that for $\delta>0$ small enough, on $V_\delta(A):=\{y\in \R^d: |d_S(y,\partial A)|\le \delta\} $, the \ function $d_S(\cdot,\partial A)$ is $C^2$ and both the exterior and interior sphere conditions hold.
 The interior sphere condition writes that for $y \in A_\delta:=V_\delta(A)\cap A:=\{ y\in \R^d: 0< d_S(y,\partial A)\le \delta\}$ (interior points of $A$ whose distance to the boundary is lower or equal than $\delta$), its orthogonal projection on the boundary $\Pi_{\partial A}(y) $ is also the unique point s.t. defining $B(y,d_S(y,\partial A)):=\{z\in \R^d: \|z-y\|\le d_S(y,\partial A) \} $, $B(y,d_S(y,\partial A))\cap \partial A=\Pi_{\partial A}(y) $. 
 The exterior sphere condition writes similarly for the points $y\in V_\delta(A)\backslash \bar A:=\{y\in \R^d: -\delta\le  d_S(y,\partial A)< 0\} $ (strictly exterior points of $A$ whose distance to the boundary is lower or equal than $\delta$).
 
 For such a $\delta$, let us now write for $0\le t_i<t_j\le T,\ x\in \R^d $:
 \begin{equation}
\label{DECOUP_IND_HOLDER}
 \begin{split}
\E[\I_{X_{t_j}^{h,t_i,x}\in A}]-\E[\I_{X_{t_j}^{t_i,x}\in A}]\\
=\{\E[\I_{X_{t_j}^{h,t_i,x}\in A}]-\E[f_\delta(X_{t_j}^{h,t_i,x})]\}+\{\E[f_\delta(X_{t_j}^{h,t_i,x})]-\E[f_\delta(X_{t_j}^{t_i,x})]\}+\{\E[f_\delta(X_{t_j}^{t_i,x})]-\E[\I_{X_{t_j}^{t_i,x}\in A}]\}=:\sum_{i=1}^3 T_i^\delta,
\end{split}
 \end{equation}
 where 
 \begin{equation}
\label{DEF_F_DELTA}
 f_\delta(x)=\begin{cases} 1,\ {\rm if}\ x\in A, \\
  \exp(1)\exp(-1/(1 -\frac{d_S(x,\partial A)^2}{\delta^2})),\ {\rm if}\ x\in V_\delta(A)\backslash A,\\
  0 \ {\rm if}\ x\not \in A\cup V_\delta(A).
  \end{cases}
\end{equation}
  Namely, $f_\delta$ stands for a smooth approximation (at least $C^2$) of the mapping $x\mapsto \I_{x\in A} $. 
 
Recalling again from the proof of Lemma 14.16 in \cite{gilb:trud:98} that for  $x\in V_\delta(A) $, $\nabla_x d_S(x,\partial A)=n(\Pi_{\partial A}(x)) $, where $n(\Pi_{\partial A}(x)) $ stands for the inner unit normal associated with the projection on the boundary, we get for $x\in V_\delta(A)\backslash A $:
\begin{equation}
\label{GRAD}
\nabla f_\delta(x)=-2\frac{d_S(x,\partial A) n(\Pi_{\partial A}(x))}{\delta^2}\Bigg(1-\frac{d_S(x,\partial A)^2}{\delta^2}\Bigg)^{-2}f_\delta(x).
\end{equation}
This yields in particular  that  $|\nabla f_\delta|_\infty=\sup_{x\in V_\delta(A)}|\nabla f_\delta(x)| \le C\delta^{-1} $. This last bound in particular yields that there exists $C\ge 1$ s.t. for all $\eta\in (0,\gamma] $, 
\begin{equation}
\label{MOD_HOLD_REG}
\sup_{x,y \in V_{2\delta}(A)}\frac{|f_\delta(x)-f_\delta(y)|}{|x-y|^\eta}\le C\delta^{-\eta}.
\end{equation}
Indeed, from the control on $|\nabla f_\delta|_\infty $ and the smoothness of $f_\delta $, we get for all $x,y\in V_{2\delta}(A)$, either $|x-y|\le \delta $ and $|f_\delta(x)-f_\delta(y)| \le C\delta^{-1}|x-y|\le C\delta^{-\eta}|x-y|^\eta$, 
or $|x-y|\ge \delta $ and $|f_\delta(x)-f_\delta(y)| \le C\le C\delta^{-\eta}|x-y|^\eta $.

Now,  the terms $T_1^\delta$ and $T_3^\delta$ in \eqref{DECOUP_IND_HOLDER} can be handled similarly thanks to the Gaussian upper bound that is satisfied, under \textbf{(A${}_H$)}, by the density of both the diffusion and its Euler scheme, see 
 Proposition \ref{THE_FINAL_PROP} or again \cite{sheu:91}, Theorem 2.1 in\cite{lema:meno:10}. Precisely, with the notations of \eqref{DECOUP_IND_HOLDER} and provided that $\delta \le (t_j-t_i)^{1/2} $:
 \begin{equation}
\label{CTR_DIST_DENS}
| T_1^\delta+T_3^\delta|\le \E[\I_{X_{t_j}^{t_i,x}\in V_\delta(A) }]+\E[\I_{X_{t_j}^{h,t_i,x}\in V_\delta(A) }]\le \frac{C \delta}{(t_j-t_i)^{1/2}}\exp\Big(-c\frac{d(x,\partial A)^2}{t_j-t_i}\Big),
 \end{equation}
where $d(x,\partial A)=|d_S(x,\partial A)| $ stands for the \textbf{nonnegative} distance to the boundary. 
 Indeed, we have that locally, up to a change of coordinate, only one variable is orthogonal to the straightened image of the hypersurface $\partial A $. We can thus integrate the Gaussian bounds w.r.t. the other ones. This yields the above control.
 
 Observe that to find the indicated convergence rate this imposes $\delta \le (t_j-t_i)^{1/2}h^{\gamma/2} $ which specifies the admissible magnitude for the parameter $\delta $.
 On the other hand, to analyze $T_2^\delta $ we recall from \eqref{MOD_HOLD_REG} that setting for all $(t,x)\in [0,t_j)\times \R^d $, $v_\delta(t,x):=\E[f_\delta(X_{t_j}^{t,x})]$ the terminal function $f_\delta $ is $\eta $-H\"older continuous, for all $\eta\in (0,\gamma] $, with H\"older modulus of continuity bounded by $\delta^{-\eta} $ on $V_{2\delta}( A) $. We will now establish, similarly to \eqref{CTR_GRAD_EXPLO}, that for all multi-index $\alpha, \ |\alpha|\le 2 $, $(t,x)\in [0,t_j)\times \R^d $:
 \begin{equation}
\label{CTR_GRAD_DELTA}
|D_x^\alpha v_\delta(t,x)|\le \frac{C}{(\delta \vee d(x,\partial A))^\eta}\frac{1}{(t_j-t)^{(|\alpha|-\eta)/2}}.
 \end{equation}
Recall indeed that
\begin{equation}
\label{EXP_GRAD}
|D_x^\alpha v_\delta(t,x)|
=\left|\int_{\R^d}D_x^\alpha p(t,t_j,x,y)(f_\delta(y)-f_\delta(x))dy\right|
\le \frac{C}{(t_j-t)^{|\alpha|/2}}\int_{\R^d} p_c(t_j-t,y-x)|f_\delta(y)-f_\delta(x)|dy,
\end{equation}
exploiting Proposition \ref{THE_FINAL_PROP} for the last inequality.
Thus, fromÊ \eqref{DEF_F_DELTA}: 
\begin{trivlist}
\item[-] if both $x,y\not \in V_\delta(A) $, then $f_\delta(x)=\I_{x\in A},f_\delta(y)=\I_{y\in A}  $. If $x\in (A\cup V_\delta(A))^C, y\in A\backslash V_\delta(A) $, or by symmetry $y\in (A\cup V_\delta(A))^C, x\in A\backslash V_\delta(A)  $, then $|x-y|\ge \delta \vee d(x,\partial A) $. If now $x,y\in (A\cup V_\delta(A))^C$ or $x,y\in A\backslash V_\delta(A)$ then $f_\delta(x)=f_\delta(y) $ yielding a trivial contribution in \eqref{EXP_GRAD}.
\item[-] if $x,y \in V_\delta(A) $, then the control of the H\"older modulus gives: $|f_\delta(x)-f_\delta(y)|\le C\delta^{-\eta }|x-y|^\eta= C(\delta \vee d(x,\partial A))^{-\eta}|x-y|^\eta$.
\item[-] if $x\in V_\delta(A),y\not \in V_\delta(A)  $ (resp. $y\in V_\delta(A),x\not \in V_\delta(A)  $) we can exploit the H\"older continuity for $y\in V_{2\delta}(A) $ (resp. $x\in V_{2\delta}(A) $) and the fact that $|x-y|\ge \delta \vee d(x,\partial A) $ for $y\not \in V_{2\delta}(A) $ (resp. $x\not \in V_{2\delta}(A) $).
\end{trivlist}
In all cases, we have established that $|f_\delta(x)-f_\delta(y)|\le C(\delta\vee d(x,\partial A))^{-\eta}|x-y|^\eta $, which plugged into \eqref{EXP_GRAD} yields the control \eqref{CTR_GRAD_DELTA}. Recall now that, again from Proposition \ref{THE_FINAL_PROP}, we have 
$v_\delta \in C^{\eta/2,\eta}([0,t_j]\times \R^d)\cap C^{1,2}([0,t_j)\times \R^d)$. In particular, $v_\delta$ has the same H\"older continuity modulus as $f_\delta $. We can as well assume w.l.o.g. that $\gamma/\eta\ge 1 $ so that $h^{\gamma/\eta}\le h\le 1 $.
 
Exploiting now \eqref{CTR_GRAD_DELTA} in an expansion similar to \eqref{DECOUP_ERR_EDP} and \eqref{THE_EQ_HOLDER_LAST_STEP}, we get:
\begin{eqnarray}
|T_2^\delta|&\le & |\E[f_\delta(X_{t_j}^{h,t_i,x})-f_\delta(X_{t_j-h^{\gamma/\eta}}^{h,t_i,x})|+|\E[v_\delta(t_j,X_{t_{j}-h^{\gamma/\eta}}^{h,t_i,x})-v_\delta(t_j-h^{\gamma/\eta},X_{t_{j}-h^{\gamma/\eta}}^{h,t_i,x})]|+\nonumber\\
&&+|\E[v_\delta(t_j-h^{\gamma/\eta},X_{t_{j}-h^{\gamma/\eta}}^{h,t_i,x})- v_\delta(t_{j-1},X_{t_{j-1}}^{h,t_i,x})]|\nonumber\\
&&+C \sum_{k=i}^{j-2}\int_{t_k}^{t_{k+1}} ds \E[\{|\nabla_x v_\delta(s,X_s^{h,t_i,x})|+|D_x^2 v_\delta(s,X_s^{h,t_i,x})|\} \{ |s-t_k|^{\gamma/2}+|X_s^{h,t_i,x}-X_{t_k}^{h,t_i,x}|^\gamma\}]\nonumber\\
&\le &Ch^{\gamma/2}\Big\{\E[(\delta \vee d(X_{t_j}^{h,t_i,x},\partial A))^{-2\eta}]^{1/2} +1 +\int_{t_i}^{t_{j}-h^{\gamma/\eta}} (1+\frac{1}{(t_j-s)^{1-\eta/2}}\E[\{ \delta \vee d(X_{s}^{h,t_i,x},\partial A)\}^{-2\eta}]^{1/2})ds\Big\},\nonumber\\\label{PREAL_T2_DELTA}
\end{eqnarray}
where the term $|\E[v_\delta(t_j-h^{\gamma/\eta},X_{t_{j}-h^{\gamma/\eta}}^{h,t_i,x})- v_\delta(t_{j-1},X_{t_{j-1}}^{h,t_i,x})]|$ is again expanded with It\^o's formula which yields bounds similar to those appearing for the contributions associated with the indexes $k\in \leftB i,j-2\rightB $.

Recalling as well that the Euler scheme satisfies the Aronson Gaussian bounds (see Proposition \ref{THE_FINAL_PROP} and Theorem 2.1 in \cite{lema:meno:10} for details) we obtain for all $s\in (t_i,t_j] $:
\begin{eqnarray*}
\E[\{ \delta \vee d(X_{s}^{h,t_i,x},\partial A)\}^{-2\eta}]\le C\Big\{ (\delta \vee d(x,\partial A))^{-2\eta}
+\int_{ \frac 12 d(x,\partial A )\ge d(y,\partial A)} \frac{\exp(-c\frac{|x-y|^{2}}{s-t_i})}{(\delta \vee d(y,\partial A))^{2\eta}}   \frac{dy}{(s-t_i)^{d/2}}\Big\}.
\end{eqnarray*}
Since on $\{ \frac 12 d(x,\partial A )\ge d(y,\partial A)\} $ we have $|x-y|\ge |x-\Pi_{\partial A}(y)|-|\Pi_{\partial A}(y)-y|\ge \frac {d(x,\partial A)}{2}\ge d(y,\partial A) $, where $\Pi_{\partial A}(y) $ again denotes the projection of $y$ on the boundary $\partial A $, we get:
\begin{eqnarray*}
\E[\{ \delta \vee d(X_{s}^{h,t_i,x},\partial A)\}^{-2\eta}]&\le& C\Big\{ (\delta \vee d(x,\partial A))^{-2\eta}
+\int_{ \frac 12 d(x,\partial A )\ge d(y,\partial A)} \frac{\exp(-c\frac{d(y,\partial A)^{2}}{s-t_i})}{(\delta \vee d(y,\partial A))^{2\eta}}   \frac{dy}{(s-t_i)^{d/2}}\Big\}\\
&\le& C\Big\{ (\delta \vee d(x,\partial A))^{-2\eta}
+1\Big\}.
\end{eqnarray*}
Hence, since  $d(x,\partial A)\ge  (t_j-t_i)^{1/2}h^{\gamma/2}\ge \delta $, we get from \eqref{PREAL_T2_DELTA}
\begin{eqnarray}
\label{EST_T_2}
|T_2^\delta| &\le & Ch^{\gamma/2}\Big\{\frac{1}{\eta d(x,\partial A)^\eta}+1\Big\}.
\end{eqnarray}
The point is now to find the $\eta\in (0,\gamma] $ maximizing $J_{x,A}:\eta\in (0,\gamma]\mapsto \eta d(x,\partial A)^\eta$ in order to minimize the associated contribution in $\frac{1}{\eta d(x,\partial A)^\eta} $ for $T_2^\delta $.  Two cases occur:
\begin{trivlist}
\item[-] $d(x,\partial A)\ge \exp(-\frac 1\gamma) $. In that case for $\eta\in (0,\gamma] $, $J_{x,A}'(\eta)=d(x,\partial A)^\eta(1+\eta \ln(d(x,\partial A))\ge 0 $ and the maximum over the constraint set is attained for $\eta=\gamma $ and $J_{x,A}(\eta)=\gamma d(x,\partial A)^\gamma $.
\item[-] $0<d(x,\partial A)< \exp(-\frac 1\gamma) $. The optimum is then attained for $\eta=-\frac{1}{\ln(d(x,\partial A))}\in (0,\gamma) $. This choice then yields: $J_{x,A}(\eta):=\frac{1}{|\ln(d(x,\partial A))|} \exp(\eta  \ln(d(x,\partial A)))=\frac{1}{|\ln(d(x,\partial A))|} e^{-1}$. 
\end{trivlist}
This gives from \eqref{EST_T_2} the global bound:
\begin{equation}
\label{EST_T_2_GLOB}
|T_2^\delta|\le Ch^{\gamma/2}\Big( 1+\frac{1}{\gamma d(x,\partial A)^\gamma}\I_{d(x,\partial A)\ge \exp(-\frac 1\gamma)}+|\ln(d(x,\partial A))|\I_{d(x,\partial A)< \exp(-\frac 1\gamma)}\Big).
\end{equation}

It is of course the last term above that becomes significant 
when the distance of the starting point comes closer to the boundary. 
The global error estimate deriving from \eqref{CTR_DIST_DENS}, the previous computations  on $J_{x,A} $ and \eqref{EST_T_2_GLOB} is then better, up to a multiplicative constant, than the one deriving from \eqref{ERR_EUL} as soon as:
 \begin{equation}
\label{BD_J}
 \frac{1}{J_{x,A}(\eta)}=|\ln(d(x,\partial A))| e\le  eh^{-C\psi(h)} \iff  |\ln(d(x,\partial A))| \le  
  h^{-C\psi(h)} \iff d(x,\partial A)\ge \exp(- 
  h^{-C\psi(h)}).
  \end{equation}
  Since to apply the Aronson's estimates for $T_1^\delta,T_3^\delta$ (see again eq. (3.20)) we had already assumed $ d(x,\partial A)\ge (t_j-t_i)^{1/2}h^{\gamma/2}\ge h^{(1+\gamma)/2}$, we derive that the condition in \eqref{BD_J} is always fulfilled. It can indeed be easily checked that $h^{(1+\gamma)/2}\ge  \exp(-h^{-C\psi(h)})$ for $h$ small enough.
Equation \eqref{HOLDER_BOREL} now follows from \eqref{CTR_DIST_DENS} and \eqref{EST_T_2_GLOB}.

\begin{REM}[Extension to piecewise smooth domains.]\label{PW_SMOOTH}
Let us mention that results similar to \eqref{HOLDER_BOREL} could also be derived for domains $A:=\cap_{i=1}^n A_i$ that write as finite intersections of smooth domains $(A_i)_{i\in \leftB 1,n\rightB} $ with bounded boundaries, and therefore have piecewise smooth boundary. In that case, $d(x,\partial A):=\inf_{i\in \{1,\cdots,n\}}d(x,\partial A_i)$ is well defined, but the corresponding signed distance can fail to be smooth, precisely close to the resulting \textit{corners}. Hence, $f_\delta$ cannot be directly defined as above. Namely,  some additional mollification of the corresponding distance would be necessary as well. 
\end{REM}

\subsection{Error Expansion for The Euler Scheme: Controls on the Densities.}
\label{SEC_EUL_ANA}
From Theorem 1.1, Theorem 2.1 and their proofs in \cite{kona:mamm:02} we have with the notations of the previous paragraph:
\begin{equation}
(p_{\varepsilon}-p_{\varepsilon}^h)(t_i,t_j,x,y)=
(p_{\varepsilon}-p_{\varepsilon}^d)(t_i,t_j,x,y)+ h\int_0^1 (1-\tau) \Big\{p_{\varepsilon}^d\otimes_h(\tilde L_{.,*}^{\varepsilon}-\tilde L.^{*,\varepsilon})^2 p_{\varepsilon}^{\tau,h}(t_i,t_j,x,y)\Big\}d\tau,
\label{ERR_EXP_EUL}
\end{equation}
where we denote for $0\le t_i<t_j\le T, \tau \in [0,1] $:
\begin{eqnarray*}
p_\varepsilon^{\tau,h}(t_i,t_j,x,y)&:=&\sum_{r=0}^{j-i}\tilde p_\varepsilon^{\tau} \otimes_h H_\varepsilon^{h,(r)}(t_i,t_j,x,y),\\ \forall (k,z)\in (i,j\rightB\times \R^d,\ \tilde p_\varepsilon^\tau(t_i,t_k,x,z)&:=&\int_{\R^d}\tilde p_\varepsilon^x(t_i,t_i+\tau h,x,w)\tilde p_\varepsilon^z(t_i+\tau h,t_k,w,z)dw.
\end{eqnarray*}
Also, for $k\in \{1,2\},t=t_{i+k},\ k\in \leftB 0,j-i-1\rightB$, $(\tilde L_{t,*}^{\varepsilon}) ^{k} \phi(x,y):=(L_{t,\xi}^{\varepsilon})^k \phi(x,y))|_{\xi=x}$,
$(\tilde L_{t}^{*,\varepsilon}) ^{k} \phi(x,y):=( \bar L_{t,\xi}^{\varepsilon})^k \phi(x,y))|_{\xi=y}$ for 
\begin{eqnarray*}
L_{t,\xi}^{\varepsilon}\phi(x,y)&=&\langle b_{\varepsilon}(t,\xi),\nabla_x \phi(x,y)\rangle +\frac 12{\rm Tr}(a_{\varepsilon}(t,\xi)D_x^2\phi(x,y)),\\
\bar L_{t,\xi}^{\varepsilon}\phi(x,y)&=&\frac 12{\rm Tr}(a_{\varepsilon}(t,\xi)D_x^2\phi(x,y)).
\end{eqnarray*}
Observe that $L_t^{\varepsilon}\phi(x,y)=L_{t,*}^{\varepsilon}\phi(x,y) $, but more generally the operators do not coincide anymore when iterated. Also, we indicate that the operators involved slightly differ from \cite{kona:mamm:02} since we chose to use a Gaussian process without drift as proxy, see \eqref{DEF_PROC_G} and \eqref{SCHEMES_AND_PARAM}. Another difference is the fact that we deal with inhomogeneous coefficients, and the notations $\tilde L_{.,*}^{\varepsilon},\tilde L.^{*,\varepsilon} $ in \eqref{ERR_EXP_EUL} are used to emphasize the time dependence of the operators in the discrete convolution $\otimes_h $. Anyhow, reproducing the proof of \cite{kona:mamm:02} taking into account the indicated differences  leads to the expression in \eqref{ERR_EXP_EUL}.

%

 We mention carefully that in order to analyze the contribution of the last term in the r.h.s. of \eqref{ERR_EXP_EUL}  no  smoothness in time of the coefficients is needed. On the other hand, such smoothness is clearly required to derive some convergence rates, since to control $ p_{\varepsilon}-p_{\varepsilon}^d$ we need to investigate the difference between time integrals and Riemann sums  (see Proposition \ref{PROP_C_D} and its proof below).

The term $\int_0^1(1-\tau) \{p_{\varepsilon}^d\otimes_h(\tilde L_{.,*}^{\varepsilon}-\tilde L.^{*,\varepsilon})^2 p_{\varepsilon}^{\tau,h}(t_i,t_j,x,y)\} d\tau$ involves derivatives of the coefficients and \textit{heat kernels} up to order 4. The point is again that the derivatives of the coefficients and kernels explode with $\varepsilon$ going to 0 (see equation \eqref{CTR_DER_MOLLI}). It is precisely this aspect that deteriorates the convergence rate w.r.t. the usual \textit{smooth} case. We carefully mention that if $\sigma(t,x)=\sigma$, the previous contributions involve lower derivatives of the heat kernel (up to order 2).

The key elements are now the following Propositions. 
The first one gives bounds for the derivatives of the densities involved in the parametrix series \eqref{SERIE_P_EPS}, \eqref{SERIE_P_EPS_D}.
The second one controls the difference between the discrete and continuous convolutions in 
\eqref{ERR_EXP_EUL}.
\begin{PROP}[Controls for the derivatives of the densities]
\label{THE_FINAL_PROP}
Let $\alpha,\ |\alpha|\le 4 $ be a multi-derivation index. 

Under \textbf{(A${}_H $)}, there exist constants $C\ge 1,c\in (0,1]$ s.t. for all $0\le s<t\le T,\ (x,y)\in (\R^d)^2$: 
\begin{equation}
\label{CTR_DER_PERT}
\begin{split}
|D_x^\alpha \bar p_{\varepsilon}(s,t,x,y)|\le \frac{C}{(t-s)^{|\alpha|/2}}p_c(t-s,y-x),|\alpha|\le 2,\\
|D_x^\alpha  \bar p_{\varepsilon}(s,t,x,y)|\le \frac{C}{(t-s)^{|\alpha|/2}}p_c(t-s,y-x)(1+\varepsilon^{-|\alpha|+2}(t-s)^{\gamma/2
}), |\alpha|\in \leftB 3,4\rightB,\\
|D_y^\alpha  \bar p_{\varepsilon}(s,t,x,y)|\le \frac{C\varepsilon^{-|\alpha|+\gamma}}{(t-s)^{|\alpha|/2}}p_c(t-s,y-x),\ |\alpha|\ge 1.
\end{split}
\end{equation}

Under \textbf{(A${}_{PS} $)}, for all $q>d,\ \eta\in (0,\alpha(q)),\ \alpha(q)=\frac 12(1-\frac dq)$,  there exist constants $C\ge 1,c\in (0,1]$ s.t. for all $0\le s<t\le T,\ (x,y)\in (\R^d)^2$: 
\begin{equation}
\label{CTR_DER_PERT_PS}
\begin{split}
|D_x \bar p_{\varepsilon}(s,t,x,y)|\le \frac{C}{(t-s)^{1/2}}p_c(t-s,y-x),\\
|D_x^\alpha  \bar p_{\varepsilon}(s,t,x,y)|\le \frac{C}{(t-s)^{|\alpha|/2}}p_c(t-s,y-x)(1+\bar C_{\eta,q}\varepsilon^{-|\alpha|+2-\eta+(1/q)\I_{|\alpha|\ge 3}}(t-s)^{\eta/2
}), |\alpha|\in \leftB 2,4\rightB,\\
|D_y  \bar p_{\varepsilon}(s,t,x,y)|\le \frac{C}{(t-s)^{1/2}}(1+\varepsilon^{-\eta}C_\eta(t-s)^{\eta/2})p_c(t-s,y-x),\\
|D_y^\alpha  \bar p_{\varepsilon}(s,t,x,y)|\le\frac{C(1+\bar C_{\eta,q}\varepsilon^{-|\alpha|+1-\eta}(t-s)^{\eta/2})}{(t-s)^{|\alpha|/2}}p_c(t-s,y-x),\ |\alpha|\in \leftB 2,4\rightB,
\end{split}
\end{equation}
where $\bar C_{\eta,q}=C_\eta\times C_q$ with $C_q$  as in Lemma \ref{LEMMA_SENS_BOUND} and $C_\eta$ as in Lemma \ref{LEMMA_SENS_HOLD}. 


In the above expressions $\bar p_{\varepsilon} $ can be any of the densities $p_{\varepsilon},p_{\varepsilon}^d,p_{\varepsilon}^{\tau,h} $ uniformly in $\tau\in [0,1] $. For $p_{\varepsilon}^d, p_{\varepsilon}^{\tau,h} $, the time variables $s,t $ are taken on the time grid.
\end{PROP}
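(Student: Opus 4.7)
The proof will proceed by differentiating the parametrix representations \eqref{SERIE_P_EPS}, \eqref{SERIE_P_EPS_D}, \eqref{SERIE_P_EPS_EULER} term by term, exploiting that the building block $\tilde p_\varepsilon(s,t,x,y)$ is the Gaussian density of $\tilde X^{(\varepsilon),y}$ with explicit covariance $\Sigma_\varepsilon(s,t,y)=\int_s^t a_\varepsilon(u,y)\,du$. First I would establish pointwise Gaussian bounds on $D_x^\alpha \tilde p_\varepsilon$ and $D_y^\alpha \tilde p_\varepsilon$ directly from this explicit Gaussian form. Differentiation in $x$ only touches the Gaussian exponent and yields clean $(t-s)^{-|\alpha|/2}$ factors with no $\varepsilon$-dependence. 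Differentiation in $y$ is more subtle: each $\partial_y$ that hits the covariance $\Sigma_\varepsilon(s,t,y)$ produces derivatives of $a_\varepsilon(\cdot,y)$ which, by Proposition \ref{PROP_CDER_MC}, are bounded by $C\varepsilon^{-|\alpha|+\gamma}$ under \textbf{(A${}_H$)}, exactly matching the prefactor in the last line of \eqref{CTR_DER_PERT}.

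\textbf{Differentiating the convolutions.} For $|\alpha|\le 2$ in $x$, a direct estimate of $D_x^\alpha[\tilde p_\varepsilon\otimes H_\varepsilon^{(r)}]$ generates the non-integrable time singularity $(u-s)^{-|\alpha|/2}$ near $u=s$. This is resolved by the standard cancellation identity $\int_{\R^d} D_x^\alpha \tilde p_\varepsilon(s,u,x,z)\,dz=0$ for $|\alpha|\ge 1$, which allows one to rewrite
$$\int_{\R^d} D_x^\alpha \tilde p_\varepsilon(s,u,x,z)\,H_\varepsilon^{(r)}(u,t,z,y)\,dz=\int_{\R^d} D_x^\alpha \tilde p_\varepsilon(s,u,x,z)\bigl[H_\varepsilon^{(r)}(u,t,z,y)-H_\varepsilon^{(r)}(u,t,x,y)\bigr]dz.$$
The $z$-H\"older regularity of $H_\varepsilon^{(r)}$, which is uniform in $\varepsilon$ thanks to Proposition \ref{PROP_FCMC}, yields a factor $|z-x|^\gamma$ that against the Gaussian $\tilde p_\varepsilon$ contributes $(u-s)^{\gamma/2}$ and restores absolute convergence of the series via \eqref{10}. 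This gives the first line of \eqref{CTR_DER_PERT} with $\varepsilon$-independent constants. For $|\alpha|\in\{3,4\}$, one cancellation no longer suffices, and the additional $|\alpha|-2$ derivatives have to be transferred through integration by parts onto $H_\varepsilon^{(r)}$, where they unavoidably hit the mollified coefficients and cost $\varepsilon^{-(|\alpha|-2)}$ by Proposition \ref{PROP_CDER_MC}; two derivatives remain freely absorbed by the operator structure $(L_u^\varepsilon-\tilde L_u^{\varepsilon,y})$, producing the factor $(1+\varepsilon^{-|\alpha|+2}(t-s)^{\gamma/2})$ of the second line of \eqref{CTR_DER_PERT}. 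The $y$-derivatives of the convolutions are treated analogously, with each $\partial_y$ necessarily crossing both the $y$-argument of $\tilde p_\varepsilon$ and the freezing point appearing in $\tilde L_u^{\varepsilon,y}$ inside $H_\varepsilon^{(r)}$.

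\textbf{Main obstacle and the (A${}_{PS}$) case.} The central technical obstacle is the combinatorial bookkeeping of where derivatives can fall: one has to organize each integration by parts so that at most $|\alpha|-2$ (resp.\ $|\alpha|-1$ for $y$-derivatives) coefficient derivatives are incurred, while maintaining absolute convergence of the parametrix series uniformly in $\varepsilon$. Under \textbf{(A${}_{PS}$)}, the pointwise H\"older control on the drift is replaced by the $L^q$-bound \eqref{DIFF_LQ}, so the cancellation step above must be combined with H\"older's inequality in the style of \cite{kona:kozh:meno:15:1}; this forces the introduction of the parameter $\eta\in(0,\alpha(q))$ and of the constants $C_\eta,C_q$ provided by Lemmas \ref{LEMMA_SENS_HOLD}--\ref{LEMMA_SENS_BOUND}, producing the factor $\bar C_{\eta,q}$ appearing in \eqref{CTR_DER_PERT_PS}. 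Finally, the case $\bar p_\varepsilon\in\{p_\varepsilon^d,p_\varepsilon^{\tau,h}\}$ follows from exactly the same arguments, since the discrete time convolution $\otimes_h$ satisfies the same cancellation identities at grid points and the Gaussian upper bound on the building block $\tilde p_\varepsilon$ is unchanged by time discretization.
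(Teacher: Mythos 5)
Your proposal follows the same architecture as the paper's proof: parametrix representation, direct Gaussian bounds on $\tilde p_\varepsilon$, cancellation to tame the $(u-s)^{-|\alpha|/2}$ singularity for $|\alpha|\le 2$, a transfer of the excess derivatives onto the kernel (costing $\varepsilon^{-(|\alpha|-2)}$) for $|\alpha|\in\{3,4\}$, and H\"older's inequality in $q$ under \textbf{(A${}_{PS}$)}. Two points, however, are genuine gaps rather than omitted routine details.

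First, the identity $\int_{\R^d}D_x^\alpha\tilde p_\varepsilon(s,u,x,z)\,dz=0$ is false as written: $\tilde p_\varepsilon(s,u,x,z)=\tilde p_\varepsilon^{\,z}(s,u,x,z)$ has its diffusion coefficient frozen at the integration variable $z$, so $\int_{\R^d}\tilde p_\varepsilon(s,u,x,z)\,dz\neq 1$ and its $x$-derivatives do not integrate to zero. The correct cancellation is $\int_{\R^d}D_x^\alpha\tilde p_\varepsilon^{\,w}(s,u,x,z)\,dz=0$ for a \emph{fixed} freezing point $w$; one must therefore introduce the centering kernel $c_\varepsilon^\alpha(s,u,x,z)=\bigl(D_x^\alpha\tilde p_\varepsilon^{\,w}(s,u,x,z)\bigr)\vert_{w=x}$ and split into two contributions: the difference $D_x^\alpha\tilde p_\varepsilon-c_\varepsilon^\alpha$, which gains $|x-z|^\gamma$ from the H\"older continuity of the frozen coefficients, and the centered term against $\Phi_\varepsilon(u,t,z,y)-\Phi_\varepsilon(u,t,x,y)$. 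Your single-step rewriting silently assumes both gains at once from an identity that does not hold.

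Second, the spatial H\"older continuity of the iterated kernels in the forward variable is the technical heart of the proof, and you both assert it without proof and attribute it to the wrong source: Proposition \ref{PROP_FCMC} controls the mollified \emph{coefficients}, not the kernels $H_\varepsilon^{(r)}$. Establishing $|\Phi_\varepsilon(u,t,x,y)-\Phi_\varepsilon(u,t,z,y)|\le C|x-z|^{\gamma/2}(t-u)^{-1+\gamma/4}p_c(t-u,y-z)$ on the diagonal regime $|x-z|\le c(t-s)^{1/2}$ requires a separate argument on $H_\varepsilon$ itself (splitting off the off-diagonal set, convexity inequalities to exchange Gaussian centers, and a trade-off between the H\"older gain and the time singularity). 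Note in particular that only the exponent $\gamma/2$ survives with an integrable singularity — the full exponent $\gamma$ you claim would make the resulting time integral diverge when combined with the $(u-s)^{-1}$ singularity of the second derivatives. A similar uniform-in-$r$ induction is needed for $D_x^\beta H_\varepsilon^{(r)}$ in the case $|\alpha|\ge 3$, with constants compatible with the convergence of the series; without it the $\varepsilon^{-|\alpha|+2}$ bookkeeping you describe cannot be closed. The same caveat applies, with the additional $L^q$/indicator localization, to the \textbf{(A${}_{PS}$)} case and to the forward ($y$-)derivatives, for which the paper uses the distinct cancellation $\int_{\R^d}D_z\tilde p(u,t,z,y)\,dz=0$ together with a change of variables $z\mapsto z+y$.
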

\begin{REM}[Spatial H\"older continuity and heat-kernel bounds]\label{RM_ILIN}
We point out that the previous controls \eqref{CTR_DER_PERT} for $ \bar p_{\varepsilon}=p_{\varepsilon}$ would also hold under the sole spatial H\"older continuity of the coefficients $b,\sigma $. This improves in some sense those of \cite{ilin:kala:olei:62} which require smoothness in time of the coefficients. We get here the same pointwise controls for the derivatives of the non degenerate heat-kernel with spatial H\"older coefficients up to order 2, uniformly in $\varepsilon\in [0,1] $.
\end{REM}
\begin{REM}[Constants in \eqref{CTR_DER_PERT_PS}]\label{REM_PROP_A_PS}
Even though we are currently considering \A{A${}_{PS} $}, the associated small smoothing effect deriving from the regularization of the drift
is the same as for the sensitivities of densities under \A{A${}_H $}, for which it was induced by the small H\"older parameter for the difference of the diffusion coefficient and its regularization. In both cases the constant $C_\eta$ appears through the control of the corresponding parametrix series, see the proofs of Theorem \ref{MTHM_KKM1}, Lemma \ref{LEMMA_SENS_HOLD} and Proposition \ref{THE_FINAL_PROP} below.
\end{REM}

\begin{PROP}[Bounds for the difference between continuous and discrete time convolutions]\label{PROP_C_D}
Under \A{A${}_H$}, there exist $C\ge 1,c\in (0,1]$ s.t. for all $0\le t_i<t_j\le T,\ (x,y)\in (\R^d)^2 $, $\eta\in (0,\gamma)$:
\begin{equation}
\label{CTR_PN_PND}
|(p_{\varepsilon}-p_{\varepsilon}^d)(t_i,t_j,x,y)|\le C_\eta
h^{(\gamma-\eta)/2}
p_c(t_j-t_i,y-x).
\end{equation}
Under \A{A${}_{PS}$}, there exist $C\ge 1,c\in (0,1]$ s.t. for all $0\le t_i<t_j\le T,\ (x,y)\in (\R^d)^2 $, $q>d,\ \eta\in (0,\alpha(q))$:
\begin{equation}
\label{CTR_PN_PND_PS}
|(p_{\varepsilon}-p_{\varepsilon}^d)(t_i,t_j,x,y)|\le 
 \bar C_{\eta,q}\Big( h|\ln(h)|\varepsilon^{-(1+\eta)}
+h^{1-\eta/2}\varepsilon^{-(1+\eta)}+h\varepsilon^{-2+1/q}\Big) p_c(t_j-t_i,y-x).
\end{equation}
with $\alpha(q), \bar C_{\eta,q}$ as in Proposition \ref{THE_FINAL_PROP}. 

If now $d(y,{\mathcal V}_\varepsilon({\mathcal I}))\ge 2\varepsilon$ the previous bound improves to 
\begin{equation}
\label{CTR_PN_PND_PS_DIST}
|p_\varepsilon-p_\varepsilon^d|(t_i,t_j,x,y)
\le \bar C_{\eta,q}\Big( h|\ln(h)|\varepsilon^{-(1+\eta)}
+h^{1-\eta/2}\varepsilon^{-(1+\eta)}+\frac{h^{1-\eta/2}}{d(y,{\mathcal V}_\varepsilon({\mathcal I}))}\Big) p_c(t_j-t_i,y-x).
\end{equation}

If $d(y,{\mathcal V}_\varepsilon({\mathcal I}))\ge 2\varepsilon$ and additionally $\sigma(t,x)=\sigma$, i.e. constant diffusion term, then
\begin{equation}
\label{CTR_PN_PND_PS_DRIFT}
|p_\varepsilon-p_\varepsilon^d|(t_i,t_j,x,y)
\le  \bar C_{\eta,q}\Big( h|\ln(h)|\varepsilon^{-\eta}
+h\varepsilon^{-(1+\eta)+1/q}+\frac{h^{1-\eta/2}}{d(y,{\mathcal V}_\varepsilon({\mathcal I}))}\Big) p_c(t_j-t_i,y-x).
%
%
%
\end{equation}

\end{PROP}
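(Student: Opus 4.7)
The strategy is to compare the parametrix series \eqref{SERIE_P_EPS} and \eqref{SERIE_P_EPS_D} term by term, writing
$$
(p_\varepsilon-p_\varepsilon^d)(t_i,t_j,x,y) = \sum_{r\ge 1}\bigl[\tilde p_\varepsilon\otimes H_\varepsilon^{(r)}-\tilde p_\varepsilon\otimes_h H_\varepsilon^{(r)}\bigr](t_i,t_j,x,y).
$$
For each $r\ge 1$ the summand is a left-endpoint Riemann-sum error: setting $F_r(u,z):=\tilde p_\varepsilon(t_i,u,x,z)\,H_\varepsilon^{(r)}(u,t_j,z,y)$,
$$
\tilde p_\varepsilon\otimes H_\varepsilon^{(r)}-\tilde p_\varepsilon\otimes_h H_\varepsilon^{(r)}=\sum_{k=0}^{j-i-1}\int_{t_{i+k}}^{t_{i+k+1}}\!du\int_{\R^d}\!\bigl[F_r(u,z)-F_r(t_{i+k},z)\bigr]\,dz.
$$
The whole work reduces to estimating the time regularity of $F_r$ on each subinterval $[t_{i+k},t_{i+k+1}]$ in a way that is summable in $r$ against the parametrix bounds \eqref{10}.

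Under \textbf{(A${}_H$)}, the mollified coefficients $b_\varepsilon,\sigma_\varepsilon$ are uniformly $\gamma/2$-Holder in time, so $\tilde p_\varepsilon(t_i,\cdot,x,z)$ and $H_\varepsilon(\cdot,t_j,z,y)=(L_u^\varepsilon-\tilde L_u^{\varepsilon,y})\tilde p_\varepsilon(u,t_j,z,y)$ are each $(\gamma-\eta)/2$-Holder in time on $[t_{i+k},t_{i+k+1}]$, with Gaussian-bounded Holder seminorms (the loss $\eta>0$ is the price of keeping a summable $\Gamma$-factor, exactly as in the proof of Theorem \ref{MTHM_KKM1}). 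Propagating this regularity to $F_r$ via the time-convolution structure of $H_\varepsilon^{(r)}=H_\varepsilon\otimes H_\varepsilon^{(r-1)}$ produces, on each cell, a bound of order $h^{(\gamma-\eta)/2}$ times a Gaussian density, multiplied by the $r$-th term of a convergent parametrix series. Summing over $k$ yields an extra factor bounded in terms of $T$, and summing over $r$ as in the proof of Theorem \ref{MTHM_KKM1} leads to the constant $C_\eta$ of \eqref{BD_C_ETA_Q} and to the bound \eqref{CTR_PN_PND}.

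Under \textbf{(A${}_{PS}$)}, the coefficients are now smooth in time, so one instead uses $F_r(u,z)-F_r(t_{i+k},z)=\int_{t_{i+k}}^u\partial_s F_r(s,z)\,ds$. The time derivative $\partial_s F_r$ has two types of contributions: (i) from $\partial_s\tilde p_\varepsilon(t_i,s,x,z)$ and $\partial_s \tilde p_\varepsilon(s,t_j,z,y)$ one gets Gaussian-bounded factors of order $(s-t_i)^{-1}+(t_j-s)^{-1}$; integrating these over $[t_{i+k},t_{i+k+1}]$ and summing over $k$ produces the $h|\ln h|$ factor. (ii) When differentiating through $(L_u^\varepsilon-\tilde L_u^{\varepsilon,y})$ one encounters derivatives of $b_\varepsilon$ which, by \eqref{THE_DER_BEPS_BETA}, grow like $\varepsilon^{-|\alpha|}$ on the set $\mathcal V_\varepsilon(\mathcal I)$ of volume $O(\varepsilon)$; a Holder inequality with exponent $q>d$ on this set, combined with Proposition \ref{THE_FINAL_PROP} to estimate the remaining Gaussian pieces, gives the powers $\varepsilon^{-(1+\eta)}$ and $\varepsilon^{-2+1/q}$ that appear in \eqref{CTR_PN_PND_PS}. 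Summation over $r$ is again controlled via the bound \eqref{10} and the constant $\bar C_{\eta,q}$ of Proposition \ref{THE_FINAL_PROP}. When $d(y,\mathcal V_\varepsilon(\mathcal I))\ge 2\varepsilon$, the Gaussian density $\tilde p_\varepsilon(s,t_j,z,y)$ for $z\in\mathcal V_\varepsilon(\mathcal I)$ enjoys the decay $|y-z|\ge d(y,\mathcal V_\varepsilon(\mathcal I))$; integrating the $\varepsilon^{-1}$ factor against this decay yields $d(y,\mathcal V_\varepsilon(\mathcal I))^{-1}$ in place of $\varepsilon^{-1}$, which gives \eqref{CTR_PN_PND_PS_DIST}. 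In the constant-$\sigma$ case the second-order part of $L_u^\varepsilon-\tilde L_u^{\varepsilon,y}$ vanishes identically, so one spatial derivative less is involved in $H_\varepsilon$, and the balance between $\varepsilon$-singularities and Gaussian integrations improves by one power of $\varepsilon$, giving \eqref{CTR_PN_PND_PS_DRIFT}.

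The main obstacle is the combinatorial bookkeeping for the iterated kernels $H_\varepsilon^{(r)}$ with $r\ge 2$: one must track how the time derivatives and the $\varepsilon^{-|\alpha|}$ contributions from spatial derivatives of $b_\varepsilon$ on $\mathcal V_\varepsilon(\mathcal I)$ propagate through the convolution, and verify that after extracting the announced powers of $h$ and $\varepsilon$ the residual series in $r$ is controlled uniformly in $\varepsilon$ by a $\Gamma$-type quotient of the form appearing in \eqref{10}. This is where the restrictions $\eta\in(0,\gamma)$ and $q>d$ enter, and where the explicit form of the constant $\bar C_{\eta,q}=C_\eta C_q$ from Lemmas \ref{LEMMA_SENS_HOLD}--\ref{LEMMA_SENS_BOUND} is crucial for the subsequent equilibration of errors carried out in Sections \ref{SEC_PROOF_THM_1}--\ref{SEC_PROOF_AS}.
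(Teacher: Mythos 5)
Your decomposition is genuinely different from the paper's: you compare the two parametrix series term by term, $\sum_{r}(\tilde p_\varepsilon\otimes H_\varepsilon^{(r)}-\tilde p_\varepsilon\otimes_h H_\varepsilon^{(r)})$, whereas the paper writes $p_\varepsilon-p_\varepsilon^d=\sum_{r\ge 0}(p_\varepsilon\otimes H_\varepsilon-p_\varepsilon\otimes_h H_\varepsilon)\otimes_h H_\varepsilon^{(r)}$, so that the Riemann-sum error only ever involves the full density $p_\varepsilon(t_i,\cdot,x,z)$ (whose forward-time H\"older modulus is established once, in \eqref{HOLD_TEMPS}) and a \emph{single} kernel $H_\varepsilon(\cdot,t_j,z,y)$. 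Your route instead requires the forward-time regularity of every iterated kernel $H_\varepsilon^{(r)}(\cdot,t_j,z,y)$, with constants of the form $C^r\prod B(\cdot,\cdot)$ so that the series in $r$ still closes; this is an additional induction of the type of \eqref{CTR_IND} that you assert ("propagating this regularity to $F_r$") but do not carry out. Under \textbf{(A${}_H$)} this is plausibly only extra bookkeeping, and you would still need to treat the first cell $k=0$ and the last cell $k=j-i-1$ separately, where the H\"older-in-time bounds degenerate (compare \eqref{K0} and \eqref{AH_DND22_LS}).

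Under \textbf{(A${}_{PS}$)} there is a genuine gap. Your step (i) claims that differentiating the Gaussian factors in time yields singularities of order $(s-t_i)^{-1}+(t_j-s)^{-1}$ and hence $h|\ln h|$. This is false for the second factor: $\partial_s H_\varepsilon(s,t_j,z,y)$ contains $\langle b_\varepsilon(s,z),\partial_s D_z\tilde p_\varepsilon(s,t_j,z,y)\rangle$, of order $(t_j-s)^{-3/2}$, and the second-order part is of order $(t_j-s)^{-3/2}$ even after using the spatial gain $|z-y|$ from $a_\varepsilon(s,z)-a_\varepsilon(s,y)$. A Riemann-sum error driven by a $(t_j-s)^{-3/2}$ derivative only gives $O(h^{1/2})$, not the announced rates. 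The paper's way out is to trade these non-integrable time singularities for $\varepsilon$-singularities by \emph{spatial} integration by parts (one or two, cf.\ \eqref{CTR_DRIFT_GLOB} and the treatment of $T_2,T_3$ in \eqref{1b}), which is also where the forward Kolmogorov equation $(L_v^\varepsilon)^*p_\varepsilon$ enters for the first factor; only then do the spatial derivatives of $b_\varepsilon$ from \eqref{THE_DER_BEPS_BETA}, localized on ${\mathcal V}_\varepsilon({\mathcal I})$ and handled by H\"older's inequality, produce the factors $\varepsilon^{-(1+\eta)}$ and $\varepsilon^{-2+1/q}$. Your step (ii) misattributes these factors to the time differentiation of $(L_u^\varepsilon-\tilde L_u^{\varepsilon,y})$ — time derivatives of $b_\varepsilon$ are bounded uniformly in $\varepsilon$ — and the proposal never mentions integration by parts, so as written the argument cannot reach \eqref{CTR_PN_PND_PS}. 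The $h^{1-\eta/2}$ terms, which come from the last cell where one must fall back on the time-H\"older continuity of the densities, are likewise unaccounted for. The mechanisms you invoke for the distance improvement \eqref{CTR_PN_PND_PS_DIST} and for the constant-$\sigma$ case are the correct ones.
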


We postpone the proof of Propositions \ref{THE_FINAL_PROP} and \ref{PROP_C_D}  to Section \ref{SEC_PROOF_ANA} for clarity.
It now remains to exploit Propositions \ref{THE_FINAL_PROP}, \ref{PROP_C_D} and \eqref{ERR_EXP_EUL} to 
specifically control how the weak error for the densities depends on the explosive norms of the mollified coefficients.

\subsubsection{Proof of The Main Results for H\"older Coefficients (Theorem \ref{THM_HOLDER_ES} under \A{A${}_H$})}
\label{SEC_PROOF_THM_1}

Observe from Proposition \ref{THE_FINAL_PROP} that, for all $k\in $ $\leftB i,j-1\rightB
,(z,y)\in \left( \R^{d}\right) ^{2},\tau \in [ 0,1]$,%
\[
\left\vert \left( \widetilde{L}_{t_{k},\ast }^{\varepsilon }-\widetilde{L}%
_{t_{k}}^{\ast ,\varepsilon }\right) p_{\varepsilon }^{\tau
,h}(t_{k},t_{j},z,y)\right\vert =\left\vert \left\langle b_{\varepsilon
}(t_{k},z),D_{z}p_{\varepsilon }^{\tau ,h}(t_{k},t_{j},z,y)\right\rangle
+\right. 
\]%
\[
\left. \frac{1}{2}{\rm Tr}\left( (a_{\varepsilon }(t_{k},z)-a_{\varepsilon
}(t_{k},y))D_{z}^{2}p_{\varepsilon }^{\tau ,h}(t_{k},t_{j},z,y)\right)
\right\vert \leq \frac{C}{\left( t_{j}-t_{k}\right) ^{1-\gamma /2}}%
p_{c}(t_{j}-t_{k},y-z). 
\]

Iterating the frozen operator, we obtain that $\left( \widetilde{L}_{t_{k},\ast }^{\varepsilon }-%
\widetilde{L}_{t_{k}}^{\ast ,\varepsilon }\right) ^{2}p_{\varepsilon }^{\tau
,h}(t_{k},t_{j},z,y)$ is a fourth order differential operator which is the
sum of the following typical terms:%
\begin{eqnarray}
b_{\varepsilon }^{l}(t_{k},z)b_{\varepsilon
}^{m}(t_{k},z)D_{z_{l}z_{m}}^{2}p_{\varepsilon }^{\tau ,h}(t_{k},t_{j},z,y)&=:&\Psi_{l,m}^{\varepsilon,\tau,h}(t_{k},t_{j},z,y), \nonumber\\
b_{\varepsilon }^{l}(t_{k},z)\left( (a_{\varepsilon
}^{mq}(t_{k},z)-a_{\varepsilon }^{mq}(t_{k},y))D_{z_{l}z_{m}z_{q}}^{3}\text{ 
}p_{\varepsilon }^{\tau ,h}(t_{k},t_{j},z,y)\right)&=:&\Psi_{l,m,q}^{\varepsilon,\tau,h}(t_{k},t_{j},z,y) , \nonumber\\
\label{DECOUP_OPERATEUR_IT}
(a_{\varepsilon }^{lm}(t_{k},z)-a_{\varepsilon
}^{lm}(t_{k},y))(a_{\varepsilon }^{qr}(t_{k},z)-a_{\varepsilon
}^{qr}(t_{k},y))D_{z_{l}z_{m}z_{q}z_{r}}^{4}p_{\varepsilon }^{\tau
,h}(t_{k},t_{j},z,y)&=:&\Psi_{l,m,q,r}^{\varepsilon,\tau,h}(t_{k},t_{j},z,y),
\end{eqnarray}%
for $l,m,q,r\in \leftB 1,d\rightB  $.
It is easy to see that the terms with fourth derivatives are the most
singular. Hence,  to evaluate 
$
p_{\varepsilon }^{d}\otimes _{h}\left( \widetilde{L}_{\cdot,\ast
}^{\varepsilon }-\widetilde{L}_{\cdot}^{\ast ,\varepsilon }\right)
^{2}p_{\varepsilon }^{\tau ,h}(t_{i},t_{j},x,y)$,
it is enough to concentrate on: 
\begin{eqnarray}
&&p_{\varepsilon }^{d}\otimes _{h}\Psi_{l,m,q,r}^{\varepsilon,\tau,h}(t_{i},t_{j},x,y)\nonumber\\
&&= 
h\big(a_{\varepsilon
}^{lm}(t_{i},x)-a_{\varepsilon }^{lm}(t_{i},y)\big)\big(a_{\varepsilon
}^{qr}(t_{i},x)-a_{\varepsilon }^{qr}(t_{i},y)\big)
D_{z_{l}z_{m}z_{q}z_{r}}^{4}p_{\varepsilon }^{\tau ,h}(t_{i},t_{j},x,y)
\nonumber\\
&&+h\sum_{k\in \leftB i+1,\left\lceil \frac{i+j}{2}\right\rceil \rightB
}\int_{\R^{d}}p_{\varepsilon }^{d}(t_{i},t_{k},x,z)\big(a_{\varepsilon
}^{lm}(t_{k},z)-a_{\varepsilon }^{lm}(t_{k},y)\big)\big(a_{\varepsilon
}^{qr}(t_{k},z)-a_{\varepsilon }^{qr}(t_{k},y)\big)
D_{z_{l}z_{m}z_{q}z_{r}}^{4}p_{\varepsilon }^{\tau ,h}(t_{k},t_{j},z,y)dz\nonumber\\
&&+h\!\!\!\!\!\!\sum_{k\in \leftB \left\lceil \frac{i+j}{2}\right\rceil +1
,j-1\rightB }\!
\int_{\R^{d}}p_{\varepsilon }^{d}(t_{i},t_{k},x,z)
\big(a_{\varepsilon }^{lm}(t_{k},z)-a_{\varepsilon
}^{lm}(t_{k},y)\big)\big(a_{\varepsilon }^{qr}(t_{k},z)-a_{\varepsilon
}^{qr}(t_{k},y)\big)D_{z_{l}z_{m}z_{q}z_{r}}^{4}p_{\varepsilon }^{\tau
,h}(t_{k},t_{j},z,y)dz\nonumber\\
&&=:(T_1+T_2+T_3)(t_i,t_j,x,y).  \label{1b}
\end{eqnarray}%
The tools to control the above terms are \eqref{CTR_DER_PERT} in Proposition \ref{THE_FINAL_PROP} and the H\"older continuity of the mollified
coefficients under \A{A${}_H $}. We readily derive:
\begin{equation}
\label{CTR_FIRST_STEP}
\left\vert T_1(t_i,t_j,x,y)\right\vert \leq \frac{Ch(1+\varepsilon ^{-2}(t_j-t_i)^{\gamma/2})}{(t_{j}-t_{i})^{2-\gamma}}%
p_{c}(t_{j}-t_{i},y-x).
\end{equation}
For the term $T_2$ in (\ref{1b}), integrating once by
parts, we obtain from \eqref{CTR_DER_PERT} and \eqref{CTR_DER_MOLLI} that:
\begin{equation}
\left\vert T_2(t_i,t_j,x,y)\right\vert \leq \frac{C\varepsilon ^{-1+\gamma }}{(t_{j}-t_{i})^{1-\gamma}}%
p_{c}(t_{j}-t_{i},y-x)\Big(1+\varepsilon ^{-1}(t_{j}-t_{i})^{\gamma /2}
\Big).
\label{2}
\end{equation}%
The term $T_3$ in (\ref{1b}) can be handled using the same arguments and  two integrations by
parts in order to get rid of the time singularities. After integrations by parts, the most singular terms w.r.t. $\varepsilon $ have the
following form:
\begin{eqnarray}
T_{31}(t_i,t_j,x,y)&:=&\text{ }h\sum_{k\in \leftB \left\lceil \frac{i+j}{2}\right\rceil +1,j-1\rightB
}\int_{\R^{d}}p_{\varepsilon }^{d}(t_{i},t_{k},x,z)D_{z_{l}}a_{\varepsilon
}^{lm}(t_{k},z)D_{z_{m}}a_{\varepsilon
}^{qr}(t_{k},z)D_{z_{q}z_{r}}^{2}p_{\varepsilon }^{\tau
,h}(t_{k},t_{j},z,y)dz,\nonumber \\
T_{32}(t_i,t_j,x,y)&:=& 
h\sum_{k\in \leftB \left\lceil \frac{i+j}{2}\right\rceil +1,j-1\rightB
}\int_{\R^{d}}D_{z_lz_m}^2p_{\varepsilon }^{d}(t_{i},t_{k},x,z)\left[ (a_{%
\varepsilon }^{lm}(t_{k},z)- a_{%
\varepsilon }^{lm}(t_{k},y))(a_{\varepsilon }^{qr}(t_{k},z)
-a_{\varepsilon }^{qr}(t_{k},y))
\right] \nonumber\\
&&\hspace{1.5cm}\times D_{z_{q}z_{r}}^{2}p_{\varepsilon }^{\tau
,h}(t_{k},t_{j},z,y)dz.  \label{3}
\end{eqnarray}%

For $T_{31}$, we obtain from inequality \eqref{CTR_DER_PERT} in Proposition \ref{THE_FINAL_PROP} and \eqref{CTR_DER_MOLLI} that: 
\begin{eqnarray}
\left\vert T_{31}(t_i,t_j,x,y)\right\vert &\leq &C\varepsilon ^{-2+2\gamma
}p_{c}(t_{j}-t_{i},y-x)\sum_{k\in \leftB \left\lceil \frac{i+j}{2}%
\right\rceil +1,j-1\rightB}\frac{h}{(t_{j}-t_{k})}\nonumber\\
&\leq& 
C\varepsilon ^{-2+2\gamma }p_{c}(t_{j}-t_{i},y-x)\int_{\frac{t_{i}+t_{j}}{%
2}}^{t_{j}-h}\frac{du}{t_{j}-u}\leq C\varepsilon ^{-2+2\gamma
}p_{c}(t_{j}-t_{i},y-x)\left\vert \ln h\right\vert . 
 \label{5b}
\end{eqnarray}%
For $T_{32}$, 
Proposition \ref{THE_FINAL_PROP} and the spatial H\"older continuity of $a_\varepsilon$ yield:%
\begin{equation}
\left\vert T_{32}(t_i,t_j,x,y)\right\vert \leq \frac{C\varepsilon ^{-2+\gamma }}{%
(t_{j}-t_{i})^{1-\gamma}}p_{c}(t_{j}-t_{i},y-x).  \label{4}
\end{equation}%

An upper-bound for $T_3$ then follows summing \eqref{4} and \eqref{5b}. We then derive from \eqref{CTR_FIRST_STEP}, \eqref{2} and \eqref{1b} that:
\begin{eqnarray}
\left\vert p_{\varepsilon }^{d}\otimes _{h}\left( \widetilde{L}_{\cdot,\ast
}^{\varepsilon }-\widetilde{L}_{\cdot}^{\ast ,\varepsilon }\right)
^{2}p_{\varepsilon }^{\tau ,h}(t_{i},t_{j},x,y)\right\vert \nonumber\\
\leq 
C
\Big(\frac{\varepsilon
^{-2+\gamma}}{(t_j-t_i)^{1-\gamma}}+\frac{h(1+\varepsilon^{-2}(t_j-t_i)^{\gamma/2})}{(t_j-t_i)^{2-\gamma}}+\varepsilon ^{-2+2\gamma }\left\vert \ln
h\right\vert \Big)p_{c}(t_{j}-t_{i},y-x). 
\label{6}
\end{eqnarray}%
We thus eventually get from \eqref{ERR_EXP_EUL}, \eqref{CTR_PN_PND} and (\ref{6}):
\begin{eqnarray}
\left\vert \left( p_{\varepsilon }-p_{\varepsilon }^{h}\right)
(t_{i},t_{j},x,y)\right\vert \nonumber\\
\leq 
C\Bigg\{ 
C_\eta h^{( \gamma -\eta ) /2}
+h
\left(\frac{h}{(t_j-t_i)^{2-\gamma}}+\frac{\varepsilon
^{-2+\gamma}}{(t_{j}-t_{i})^{1-\gamma }}\big(1+
\frac{\varepsilon^{-\gamma}}{(t_j-t_i)^{1-\gamma/2}}h\big)+\varepsilon ^{-2+2\gamma }\left\vert \ln
h\right\vert \right)\Bigg\} p_{c}(t_{j}-t_{i},y-x).  \label{7}
\end{eqnarray}%
Without loss of generality we assume now that $0\leq t_{j}-t_{i}\leq T\leq
1$. We also suppose that:
\begin{equation}%
\label{CTR_EPSILON_TI_TJ}
\left(\frac{h}{(t_j-t_i)^{1-\gamma/2}}\right)^{1/\gamma} \le \varepsilon.
\end{equation}
We will check that \eqref{CTR_EPSILON_TI_TJ} holds for the specific choice of the parameters $\varepsilon, \eta $ which is performed below.

We derive from
equations (\ref{7}), \eqref{CTR_EPSILON_TI_TJ}, together with \eqref{DECOUP_ERR}, \eqref{BD_PREAL_EUL_HOLD}  that:%
\begin{equation}
\left\vert \left( p-p^{h}\right) (t_{i},t_{j},x,y)\right\vert \leq C\left\{
C_{\eta }\left( \varepsilon ^{\gamma -\eta }+h^{\left( \gamma -\eta \right)
/2}\right) +\frac{h\varepsilon ^{-2+\gamma }}{(t_{j}-t_{i})^{1-\gamma }}+%
h\varepsilon ^{-2+2\gamma }\left\vert \ln h\right\vert
\right\} 
p_{c}(t_{j}-t_{i},y-x).  \label{8b}
\end{equation}%
Take now 
\[
C_{\eta }\varepsilon ^{\gamma -\eta }=\frac{h\varepsilon ^{-2+\gamma }}{%
(t_{j}-t_{i})^{1-\gamma }}\Longleftrightarrow \varepsilon =\left( \frac{h}{%
(t_{j}-t_{i})^{1-\gamma }}\right) ^{1/(2-\eta )}C_{\eta }^{-1/(2-\eta )}. 
\]%
For such a choice of a mollifying parameter we have for $(t_{j}-t_{i})\geq
h^{1/(2-\gamma )}$:
\begin{eqnarray*}
\varepsilon ^{\gamma }\left\vert \ln h\right\vert &=& \left( \frac{h}{%
(t_{j}-t_{i})^{1-\gamma }}\right) ^{\gamma /(2-\eta )}C_{\eta }^{-\gamma
/(2-\eta )}\left\vert \ln h\right\vert \leq 
h^{\gamma /((2-\gamma )(2-\eta ))}C_{\eta }^{-\gamma /(2-\eta )}\left\vert
\ln h\right\vert.
\end{eqnarray*}
Assume for a while that $\eta$ can be taken so that:
\begin{equation}
\label{CHOICE_LOG}
h^{\gamma /((2-\gamma )(2-\eta))}C_{\eta }^{-\gamma /(2-\eta )}
|\ln h|\le h^{\gamma /((2-\gamma )2)} \Longleftarrow \frac{2(2-\gamma)}{\gamma} \frac{\ln_2(h^{-1})}{\ln(h^{-1})}\le \frac{\eta}{2},
\end{equation}
recalling as well that $C_\eta\ge 1$ for the last assertion.
Then, for $(t_{j}-t_{i})\geq
h^{1/(2-\gamma )}$ if \eqref{CHOICE_LOG} holds:
\begin{equation*}
\varepsilon ^{\gamma }\left\vert \ln h\right\vert \le h^{\gamma /((2-\gamma )2)} \le  (t_{j}-t_{i})^{\gamma
/2} .
\end{equation*}%
Hence, from \eqref{8b}, if \eqref{CHOICE_LOG} holds:
\begin{eqnarray}
|(p-p^h)(t_i,t_j,x,y)|\le C \Big\{C_\eta h^{(\gamma-\eta)/2}+C_\eta^{\frac{2-\gamma}{2-\eta}} \left(\frac{h}{(t_j-t_i)^{1-\gamma}} \right)^{\frac{\gamma-\eta}{2- \eta }}\Big\} p_c(t_j-t_i,y-x)\nonumber\\
\le  C \exp(C(2\eta^{-1}+1)^{2\eta^{-1}+1} ) \Big\{h^{(\gamma-\eta)/2}+ \Big(\frac{h}{(t_j-t_i)^{1-\gamma}}\Big)^{\frac\gamma 2-\eta \frac{1-\gamma/2}{2-\eta}}\Big\}p_c(t_j-t_i,y-x),
\label{NEW_PREAL_BD_THM_1}
\end{eqnarray}
using the bounds of Lemma \ref{LEMMA_SENS_HOLD} for the last inequality.
The point is now to carefully choose $\eta:=\eta(h) $.
Let us consider the specific sequence $\eta=\eta(h):=2\frac{\log_3(h^{-1})}{\log_2(h^{-1})} $, where we recall that for $k\in \N$, $\log_k(x)$ stands for the $k^{\rm th} $ iterated logarithm of $x$. Observe that this $\eta(h) $ satisfies the condition \eqref{CHOICE_LOG} for $h$ small enough. 
 Setting $\beta_h:=h^{-\eta}$ and $\alpha_h:=\exp\left( C(2\eta^{-1}+1)^{2\eta^{-1}+1}\right) $, we get that:
 \begin{eqnarray*}
\log_2(\beta_h)&=&\log(\eta \log(h^{-1}))= \log(2)+\log_4(h^{-1})-\log_3(h^{-1})  +\log_2(h^{-1}),\\
\log_2(\alpha_h)&=& \log(C(2\eta^{-1}+1)^{2\eta^{-1}+1})=\log(C)+(2\eta^{-1}+1)\log(2\eta^{-1}+1)\\
&=&\log(C)+(\frac{\log_2(h^{-1})}{\log_3(h^{-1})}+1)\log(2\eta^{-1}(1+\frac{\eta}2))\\
&=&\log(C)+(\frac{\log_2(h^{-1})}{\log_3(h^{-1})}+1)\{\log(2\eta^{-1})+\log(1+\frac{\eta}2)\}\\
&=&\log(C)+(\frac{\log_2(h^{-1})}{\log_3(h^{-1})}+1)\{\log_3(h^{-1})-\log_4(h^{-1})+\log(1+\frac{\eta}2)\}\\
&=&\log_2(h^{-1})-\frac{\log_2(h^{-1})\log_4(h^{-1})}{\log_3(h^{-1})}+\log_3(h^{-1})-\log_4(h^{-1})+R_h,\\
R_h&:=&\log(C)+
\log(1+\frac{\log_3(h^{-1})}{\log_2(h^{-1})})\left\{ \frac{\log_2(h^{-1})}{\log_3(h^{-1})}+1\right\}.
 \end{eqnarray*}
It is easily seen that there exists a finite constant $\bar C>0 $ s.t. for all $h$ small enough, $R_h\le \bar C $ and that $\log_2(\beta_h)\ge \log_2(\alpha_h) -\bar C$. By monotonicity of the exponential, recalling as well that $\eta\in (0,\gamma) $, we thus derive:
\begin{equation}
\label{ERROR_SENSI_EUL}
\big(\beta_h^{\frac12}+\beta_h^{\frac{1-\gamma/2}{2-\eta}} \big)\alpha_h=\big(h^{-\frac{\eta}{2}}+h^{-\eta \frac{1-\gamma/2}{2-\eta}}\big)\exp(C (2\eta^{-1}+1)^{2 \eta^{-1}+1})\le  2h^{-\eta(1/2+ \exp(\bar C))}.
\end{equation}
The previous choice of $\eta$ yields that, since $C_\eta=C\alpha_h$, \eqref{CTR_EPSILON_TI_TJ} is satisfied as well. 
Plugging \eqref{ERROR_SENSI_EUL} into \eqref{NEW_PREAL_BD_THM_1}
we complete the proof of equation \eqref{ERR_EUL} in Theorem \ref{THM_HOLDER_ES}.

\subsubsection{Proof of The Main Results for piecewise smooth coefficients (Theorem \ref{THM_HOLDER_ES_PSD} under \A{A${}_{PS}$})} 
\label{SEC_PROOF_AS}
Keeping the definitions of \eqref{DECOUP_OPERATEUR_IT}, the idea is to proceed as in the previous section from equations \eqref{ERR_EXP_EUL},  and \eqref{1b}. 
To emphasize the specificity of Assumptions \A{A${}_{PS} $}, due to the approximation of the piecewise smooth drift, we begin with the special case $\sigma(t,x)=\sigma $. In that framework, the only terms appearing in $(\tilde L_{.,*}^{\varepsilon}-\tilde L_{.}^{*,\varepsilon})^2p_{\varepsilon}^{\tau,h} $ are the $\Psi_{l,m}^{\varepsilon,\tau,h} $ introduced in \eqref{DECOUP_OPERATEUR_IT}.
From equation \eqref{CTR_DER_PERT_PS} in Proposition \ref{THE_FINAL_PROP}, using
 a direct control for the index $k=i$ and a global integration by part for $k>i$, associated with the bound of \eqref{THE_DER_BEPS_BETA}, we derive:
\begin{eqnarray*}
|[p_{\varepsilon}^d\otimes_h (\tilde L_{.,*}^{\varepsilon}-\tilde L_{.}^{*,\varepsilon})^2p_{\varepsilon}^{\tau,h}](t_i,t_j,x,y)|\\
\le C\Big(\frac{\bar C_{\eta,q}\varepsilon^{-\eta}}{(t_j-t_i)}hp_c(t_j-t_i,y-x)+ h\!\!\!\!\! 
\sum_{k\in \leftB i+1,j-1 \rightB}  \Big|\int_{\R^d } {\rm div}_z  \Big( p_\varepsilon^d(t_i,t_k,x,z) b_\varepsilon(t_k,z)\Big) \langle b_\varepsilon(t_k,z), \nabla_z p_{\varepsilon}^{\tau,h}(t_k,t_j,z,y)\rangle
dz\Big|\Big)\nonumber\\
\le C\Big( \bar C_{\eta,q}\varepsilon^{-\eta}p_c(t_j-t_i,y-x)\\
+h \!\!\!\! \sum_{k\in \leftB i+1,j-1 \rightB}  \int_{\R^d } \Big(\frac{\bar C_{\eta,q}\varepsilon^{-\eta}}{(t_k-t_i)^{1/2}}+(1+\varepsilon^{-1}\I_{z\in {\mathcal V}_\varepsilon}({\mathcal I})) \Big) p_c(t_k-t_i,z-x) \frac{p_c(t_j-t_k,y-z)}{(t_j-t_k)^{1/2}}dz\Big).
\end{eqnarray*}
The point is now to use the H\"older inequality to exploit that the set on which $\nabla_z b_\varepsilon $ gives an explosive bound is \textit{small}. We get:
\begin{eqnarray*}
|[p_{\varepsilon}^d\otimes_h (\tilde L_{.,*}^{\varepsilon}-\tilde L_{.}^{*,\varepsilon})^2p_{\varepsilon}^{\tau,h}](t_i,t_j,x,y)|
\le C\Big(\bar C_{\eta,q}\varepsilon^{-\eta}p_c(t_j-t_i,y-x)\\
+h\sum_{k\in \leftB i+1,j-1 \rightB} \frac{1}{(t_j-t_k)^{1/2}}\Big\{\frac{\bar C_{\eta,q}\varepsilon^{-\eta}}{(t_k-t_i)^{1/2}}p_c(t_j-t_i,y-x)
\\+\varepsilon^{-1+1/q}
 \Big(\int_{\R^d} p_c(t_k-t_i,z-x)^{\bar q}p_c(t_j-t_k,y-z)^{\bar q}dz\Big)^{1/\bar q}\Big\}\Big),
\end{eqnarray*}
denoting by $\bar q>1$ the conjugate of $q$, $q^{-1}+\bar q^{-1}=1 $. Recall now that:
\begin{eqnarray*}
\Big(\int_{\R^d} p_c(t_k-t_i,z-x)^{\bar q}p_c(t_j-t_k,y-z)^{\bar q}dz\Big)^{1/\bar q}&=& \Big(\frac{c(t_j-t_i)}{(2\pi)(t_k-t_i)(t_j-t_k)}\Big)^{d/(2q)
}\bar q^{-d/(2\bar q)}  p_c(t_j-t_i,y-x).
\end{eqnarray*}
This yields:
\begin{eqnarray}
\label{THE_EQ_ERR_1T_PS_NO_SIG}
|[p_{\varepsilon}^d\otimes_h (\tilde L_{.,*}^{\varepsilon}-\tilde L_{.}^{*,\varepsilon})^2p_{\varepsilon}^{\tau,h}](t_i,t_j,x,y)|
&\le &C\Big(\bar C_{\eta,q}\varepsilon^{-\eta}+\frac{1}{\alpha(q)}\varepsilon^{-1+1/q}(t_j-t_i)^{\alpha(q)}\Big)p_c(t_j-t_i,y-x)\nonumber\\
&\le& C\bar C_{\eta,q}\varepsilon^{-1+1/q}p_c(t_j-t_i,y-x),
\end{eqnarray}
as soon as $\varepsilon^{1-\frac 1q-\eta}\le 1 $ which holds true for $\eta $ small enough (remember $q>d $).

Performing now in the general case, involving derivatives of the heat kernel up to order 4, an integration by part similar to the one described for \eqref{1b} and using the H\"older inequality as above for the terms involving derivatives of $b_\varepsilon $, we derive from \eqref{CTR_DER_PERT_PS} in Proposition \ref{THE_FINAL_PROP}, that for all $ q>d,\ \eta\in (0,\alpha(q)) $:
 \begin{equation}
\label{THE_EQ_ERR_1T_PS}
|[p_{\varepsilon}^d\otimes_h (\tilde L_{.,*}^{\varepsilon}-\tilde L_{.}^{*,\varepsilon})^2p_{\varepsilon}^{\tau,h}](t_i,t_j,x,y)|
\le 
C
\left\{1+\bar C_{\eta,q}\varepsilon^{-(1+\eta)}(1+\varepsilon^{\eta/2}|\ln(h)|) 
\right\}p_c(t_j-t_i,y-x).
\end{equation}

We thus get in whole generality, from \eqref{BD_PREAL_EUL_BOUND}, \eqref{ERR_EXP_EUL}, \eqref{THE_EQ_ERR_1T_PS} and \eqref{CTR_PN_PND_PS}  in Proposition \ref{PROP_C_D}:
\begin{eqnarray*}
|p-p^h(t_i,t_j,x,y)|&\le& C\Big[C_q
\varepsilon^{1/q} + 
\bar C_{\eta,q}
h\varepsilon^{-(1+\eta)}(1+\varepsilon^{\eta/2}|\ln(h)|)
\\
&&
+
 \bar C_{\eta,q}\Big( h|\ln(h)|\varepsilon^{-(1+\eta)}
+h^{1-\eta/2}\varepsilon^{-(1+\eta)}+h\varepsilon^{-2+1/q}\Big) \Big]p_c(t_j-t_i,y-x).
\end{eqnarray*}
If now $d(y,{\mathcal V}_\varepsilon({\mathcal I}))\ge 2 \varepsilon $, then, from \eqref{CTR_PN_PND_PS_DIST} in Proposition \ref{PROP_C_D}:
\begin{eqnarray*}
|p-p^h(t_i,t_j,x,y)|&\le& C\Big[C_q
\varepsilon^{1/q} + 
\bar C_{\eta,q}
h\varepsilon^{-(1+\eta)}(1+\varepsilon^{\eta/2}|\ln(h)|)
\\
&&
+
 \bar C_{\eta,q}\Big( h|\ln(h)|\varepsilon^{-(1+\eta)}
+h^{1-\eta/2}\varepsilon^{-(1+\eta)}+\frac{h^{1-\eta}}{d(y,{\mathcal V}_\varepsilon({\mathcal I}))}\Big) \Big]p_c(t_j-t_i,y-x).
\end{eqnarray*} 
Eventually, if we additionally have that $\sigma(t,x)=\sigma $, \eqref{CTR_PN_PND_PS_DRIFT} in Proposition \ref{PROP_C_D} and  \eqref{THE_EQ_ERR_1T_PS_NO_SIG}  yield:
\begin{eqnarray*}
|p-p^h(t_i,t_j,x,y)|&\le& C\Big[C_q
\varepsilon^{1/q} +
 \bar C_{\eta,q}h\varepsilon^{-1+1/q}\\
&&
+\bar C_{\eta,q}\Big( h|\ln(h)|\varepsilon^{-\eta}
+h\varepsilon^{-(1+\eta)+1/q}+\frac{h^{1-\eta}}{d(y,{\mathcal V}_\varepsilon({\mathcal I}))}\Big) \Big]p_c(t_j-t_i,y-x).
\end{eqnarray*}
We then set $ C_q
\varepsilon^{1/q}=\bar C_{\eta,q}h\varepsilon^{-2+1/q}$ in the general case, i.e. for $b,\sigma $ depending both on the spatial variable and without any distance condition for the final point $y$.
If $ d(y,{\mathcal V}_\varepsilon({\mathcal I}))\ge 2\varepsilon$, we take
$ C_q
\varepsilon^{1/q}=\bar C_{\eta,q}
h\varepsilon^{-(1+\eta)}
$ for a general $\sigma $
 and $C_q
\varepsilon^{1/q}= \bar C_{\eta,q}h\varepsilon^{-(1+\eta)+1/q}$ if $\sigma(t,x)=\sigma $. The results can be derived  as in the previous section choosing $\eta:=\eta(h)=\psi(h)$, $q:=q(h)$ s.t. $\alpha(q)=\psi(h)$. For \eqref{ERR_EUL_PS_SD_GLOB} and \eqref{ERR_EUL_PS_SD}, we recall as well that if $d(y,{\mathcal I})\ge h^{1/2-\epsilon} $ for a fixed given $\epsilon>0 $ for a general $\sigma $ and $d(y,{\mathcal I})\ge h^{1-\epsilon} $
for $\sigma(t,x)=\sigma $, the condition $d(y,{\mathcal V}_\varepsilon({\mathcal I}))\ge 2\varepsilon $ is met.

\mysection{Proof of the Technical Results from Section \ref{HK_ANA}.}
\label{PROOF_HK}

\label{SEC_PROOF_ANA}

\subsection{Proof of Proposition \ref{THE_FINAL_PROP}.}

\subsubsection{Proof under \textbf{(A${}_H $)}.}
Let us establish the result for $p_{\varepsilon}$. 
We start from the parametrix representation of $p_{\varepsilon} $ obtained in \eqref{SERIE_P_EPS}.
In all cases, we can readily derive from \eqref{DEF_PROC_G} (recall that $\tilde X^{\varepsilon,y}  $ is a non degenerate Gaussian process) and \eqref{CTR_DER_MOLLI} in Proposition \ref{PROP_CDER_MC} that for the \textit{main} term in the expansion for all multi-index $\alpha,\ |\alpha|\in \leftB 1,4\rightB $:
\begin{equation}
|D_x^\alpha \tilde p_{\varepsilon}(s,t,x,y)|\le \frac{C}{(t-s)^{|\alpha|/2}}p_c(t-s,y-x),\
|D_y^\alpha \tilde p_{\varepsilon}(s,t,x,y)|\le \frac{C\varepsilon^{-|\alpha|+\gamma}}{(t-s)^{|\alpha|/2}}p_c(t-s,y-x). \label{5}
\end{equation}
Let us now concentrate on the remainder term:
\begin{eqnarray*}
R_{\varepsilon}(s,t,x,y)&:=&\sum_{i\ge 1} \tilde p_{\varepsilon}\otimes H_{\varepsilon}^{(i)}(s,t,x,y)=\tilde p_{\varepsilon} \otimes \Phi_{\varepsilon}(s,t,x,y),
\Phi_{\varepsilon}(s,t,x,y):=\sum_{i\ge 1}H_{\varepsilon}^{(i)}(s,t,x,y).
\end{eqnarray*}
We focus on the first two inequalities in \eqref{CTR_DER_PERT}, the last one can be proved similarly.
The ideas are close to those in \cite{ilin:kala:olei:62}, but we need to adapt them since they considered the ``forward" version of the parametrix expansions.
The key point is that, for H\"older coefficients we have bounded controls for the derivatives of the remainder in the backward variable up to order two. 
It is first easily seen for the first derivatives, since the first order derivation gives an integrable singularity in time in the previous expansions.
Indeed, from \eqref{5} and \eqref{10}, one readily gets the statement if $|\alpha|=1$. The case $|\alpha|\ge 2$ is much more subtle and needs to be discussed thoroughly. Write indeed:
\begin{eqnarray}
D_{x}^\alpha R_{\varepsilon}(s,t,x,y)&=&\lim_{\tau\rightarrow 0} \int_{s+\tau}^{(t+s)/2} du \int_{\R^d} D_x^\alpha \tilde p_{\varepsilon}(s,u,x,z) \Phi_{\varepsilon}(u,t,z,y) dz+\notag\\
&&
\int_{(t+s)/2}^{t} du \int_{\R^d} D_x^\alpha \tilde p_{\varepsilon}(s,u,x,z) \Phi_{\varepsilon}(u,t,z,y) dz\notag\\
&=:&\lim_{\tau \rightarrow 0}D_{x}^\alpha R_{\varepsilon}^\tau(s,t,x,y)+D_x^\alpha R_{\varepsilon}^f(s,t,x,y).\label{RESTES_N}
\end{eqnarray}
The contribution $D_x^\alpha R_{\varepsilon}^f(s,t,x,y) $ does not exhibit time singularities in the integral, since on the considered integration set $u-s\ge \frac12 (t-s) $. 
Let us now recall the usual control on the parametrix kernel under \textbf{(A${}_H $)}, see e.g. Section 2 in \cite{kona:kozh:meno:15:1}. There exist $c,c_1$ s.t. for all $0\le u<t\le T, (z,y)\in (\R^d)^2 $:
\begin{equation}
\left\vert H_\varepsilon(u,t,z,y)\right\vert \leq \frac{c_1(1\vee T^{(1-\gamma)/2})}{%
(t-u)^{1-\gamma /2}}p_{c}(t-u,z-y).  \label{8}
\end{equation}%
Inequality \eqref{8} for $H_{\varepsilon}$ then yields for all $r\in \N^*, 0\le s<t\le T, (x,y)\in (\R^d)^2 $:
\begin{equation}
\label{CTR_H_ITER}
|H_{\varepsilon}^{(r)}(s,t,x,y)|
\leq ((1\vee T^{(1-\gamma)/2})c_1)^{r}\prod_{i=1}^{r-1} B(\frac{\gamma }{2},1+(i-1)\frac{\gamma }{2})
p_{c}(t-s,y-x) (t-s)^{-1+\frac{r \gamma}{2}},
\end{equation}
with the convention $\prod_{i=1}^0=1 $. We thus derive that for all $0\le s<t\le T, (x,y)\in (\R^d)^2 $:
\begin{equation}
\label{BOUND_PHI}
|\Phi_{\varepsilon}(s,t,x,y)|\le \frac{C}{(t-s)^{1-\gamma/2}}p_c(t-s,y-x).
\end{equation}

Thus, from inequalities \eqref{5} and \eqref{BOUND_PHI}:
\begin{equation}
\label{RESTES_FAR}
|D_x^\alpha R_{\varepsilon}^f(s,t,x,y)|\le \frac{C}{(t-s)^{(|\alpha|-\gamma)/2}}p_c(t-s,y-x).
\end{equation}
The delicate contribution is indeed $D_{x}^\alpha R_{\varepsilon}^\tau(s,t,x,y)$ for which we need to be more careful. If $|\alpha|=2 $ we exploit some cancellation properties of the derivatives of the Gaussian kernels.
Recall now that for an arbitrary $w\in \R^d $, setting for $0\le s<u\le T,\ \Sigma_\varepsilon(s,u,w) :=\int_s^u \sigma_\varepsilon\sigma_\varepsilon^*(v,w)dv$,
\begin{eqnarray}
\ \tilde p_{\varepsilon}^w(s,u,x,z)&=&\frac{1}{(2\pi)^{d/2}\det(\Sigma_{\varepsilon}(s,u,w))^{1/2}}
\exp\Bigg(-\frac{1}2\langle \Sigma_{\varepsilon}(s,u,w)^{-1}(z-x),z-x\rangle \Bigg),\notag\\
D_{x_ix_j}^2\tilde p_{\varepsilon}^w(s,u,x,z)&=&\Big\{\big(\Sigma_{\varepsilon}^{-1}(s,u,w)(z-x)\big)_i\big(\Sigma_{\varepsilon}^{-1}(s,u,w)(z-x )\big)_j\notag\\
&& -\delta_{ij}(\Sigma_{\varepsilon}^{-1}(s,u,w))_{ii}\Big\}\tilde p_{\varepsilon}^w(s,u,x,z),\ \forall (i,j)\in\leftB 1,d\rightB^2,
\label{SEC_DER}
\end{eqnarray}
where for $q \in \R^d$, we denote for $i\in \leftB 1,d\rightB $ by $q_i $ its $i^{{\rm th}} $ entry.
Hence, for all multi-index $\alpha,\ |\alpha|=2 $:
\begin{equation}
\label{CENTERING_COND}
\int_{\R^d} D_{x}^\alpha\tilde p_{\varepsilon}^w(s,u,x,z) dz=0. 
\end{equation}
Introducing the centering function $c_{\varepsilon}^\alpha(s,u,x,z):=\left(D_x^\alpha  \tilde p_{\varepsilon}^w(s,u,x,z)\right)|_{w=x}$, we rewrite:
\begin{eqnarray}
D_x^\alpha R_{\varepsilon}^\tau(s,t,x,y)&=&\int_{s+\tau}^{(s+t)/2}du \int_{\R^d}(D_x^\alpha \tilde p_{\varepsilon}-c_{\varepsilon}^\alpha)(s,u,x,z)\Phi_{\varepsilon}(u,t,z,y)dz\nonumber \\
&&+\int_{s+\tau}^{(s+t)/2}du \int_{\R^d}c_{\varepsilon}^\alpha(s,u,x,z)(\Phi_{\varepsilon}(u,t,z,y)-\Phi_{\varepsilon}(u,t,x,y) )dz\nonumber\\
&:=&(R_{\varepsilon}^{\tau,1}+R_{\varepsilon}^{\tau,2})(s,t,x,y),\label{DECOUP_RNEPS}
\end{eqnarray}
exploiting the centering condition \eqref{CENTERING_COND} to introduce the last term of the first equality.
On the one hand, the terms $ D_x^\alpha \tilde p_{\varepsilon}(s,u,x,z), c_{\varepsilon}^\alpha(s,u,x,z)$ only differ in their frozen coefficients (respectively at point $z$ and $x$). Exploiting the H\"older property in space of the mollified coefficients, it is then easily seen that:
\begin{eqnarray*}
|(D_x^\alpha \tilde p_{\varepsilon}-c_{\varepsilon}^\alpha)(s,u,x,z)|&\le& \frac{C|x-z|^\gamma}{(u-s)}p_c(u-s,z-x)
\le \frac{C}{(u-s)^{1-\gamma/2}}p_c(u-s,z-x),
\end{eqnarray*}
yielding an integrable singularity in time so that, from \eqref{BOUND_PHI}:
\begin{equation}
\label{CTR_R1NEPS}
|R_{\varepsilon}^{\tau,1}(s,t,x,y)|\le \frac{C}{(t-s)^{1-\gamma}}p_c(t-s,y-x).
\end{equation}
Let us now control the other contribution. The key idea is now to exploit the \textit{smoothing property}
 of the kernel $\Phi_{\varepsilon} $. Assume indeed that for $A:=\{ z\in \R^d:|x-z| \le c(t-s)^{1/2}\}$ (recall as well that $u\in [s,\frac{s+t}{2}] $) one has:
 \begin{equation}
\label{DIAG_HOLDER_KERNEL}
| \Phi_{\varepsilon}(u,t,x,y)-\Phi_{\varepsilon}(u,t,z,y)|\le C\frac{|x-z|^{\gamma/2}}{(t-u)^{1-\gamma/4}}p_c(t-u,y-z).
 \end{equation} 
Then, we can derive from \eqref{5}, \eqref{DECOUP_RNEPS} and \eqref{DIAG_HOLDER_KERNEL}:
\begin{eqnarray}
\label{LE_CTR_DU_RESTE_EN_EPS_H}
|R_{\varepsilon}^{\tau,2}(s,t,x,y)|
\le C^2 \int_{s+\tau}^{(s+t)/2}du\int_{A}\frac{|x-z|^{\gamma/2}}{(u-s)}p_c(u-s,z-x)\frac{1}{(t-u)^{1-\gamma/4}}p_c(t-u,y-z)dz\nonumber\\
+\frac{C}{(t-s)^{\gamma/4}} \int_{s+\tau}^{(s+t)/2}du\int_{A^C}\frac{|x-z|^{\gamma/2}}{(u-s)}p_c(u-s,z-x)\{|\Phi_{\varepsilon}(u,t,z,y)|+|\Phi_{\varepsilon}(u,t,x,y)|\}dz.
\end{eqnarray}
From \eqref{BOUND_PHI}, we finally get on the considered time set:
\begin{equation*}
\begin{split}
|R_{\varepsilon}^{\tau,2}(s,t,x,y)|&\le C p_c(t-s,y-x)\int_{s+\tau}^{(s+t)/2}du \frac{1}{(u-s)^{1-\gamma/4}}\frac{1}{(t-u)^{1-\gamma/4}}\\
&\le \frac{C}{(t-s)^{1-\gamma/2}}p_c(t-s,y-x),
\end{split}
\end{equation*}
which together with \eqref{CTR_R1NEPS}, \eqref{DECOUP_RNEPS}, \eqref{RESTES_FAR} and \eqref{RESTES_N} gives the statement. It remains to establish \eqref{DIAG_HOLDER_KERNEL}. From the definition of $\Phi_{\varepsilon}$ and the smoothing effect of the kernel $H_{\varepsilon}$ in \eqref{CTR_H_ITER}, it suffices to prove that on the set $\bar A:=\{ z\in\R^d:|x-z|\le c(u'-u)^{1/2}\}$:
\begin{equation}
\label{HOLD_KER}
|H_{\varepsilon}(u,u',x,w)-H_{\varepsilon}(u,u',z,w)|\le C\frac{|x-z|^{\gamma/2}}{(u'-u)^{1-\gamma/4}}p_c(u'-u,w-z), 
\end{equation}
for $u'\in (u,t],\ u\in [s,(s+t)/2]$. Observe that $\bar A\subset A $. Indeed, recalling that we want to establish \eqref{DIAG_HOLDER_KERNEL} on $A$ if $z\not\in \bar A $, we get from \eqref{CTR_H_ITER}:
\begin{eqnarray*}
\int_u^t du' \int_{\bar A^c} |H_{\varepsilon}(u,u',x,w)-H_{\varepsilon}(u,u',z,w)| |(\sum_{i\ge 2} H_{\varepsilon}^{(i)} )(u',t,w,y)| dw\\
\le \int_u^t du' \int_{\bar A^c} \frac{C}{(u'-u)^{1-\gamma/2}} (p_c(u'-u,w-x)+ p_c(u'-u,w-z))\\
\times \frac{|x-z|^{\gamma/2}}{(u'-u)^{\gamma/4}}\frac{C}{(t-u')^{1-\gamma}}p_c(t-u',y-w)dw
\le C \frac{|x-z|^{\gamma/2}}{(t-u)^{1-5\gamma/4}}p_c(t-u,y-z)
\le C \frac{|x-z|^{\gamma/2}}{(t-u)^{1-\gamma/4}}p_c(t-u,y-z),
\end{eqnarray*}
exploiting that $z\in A$, $t-u\ge \frac12(t-s) $, 
and the usual convexity inequality $\frac{|y-x|^2}{t-u}\ge \frac{|y-z|^2}{2(t-u)}-\frac{|z-x|^2}{t-u}\ge \frac{|y-z|^2}{2(t-u)}-2c^2  $ for the last but one inequality. 
On the other hand, on $\bar A $ we get \eqref{DIAG_HOLDER_KERNEL} from \eqref{HOLD_KER} and \eqref{CTR_H_ITER}.

Let us turn to the proof of \eqref{HOLD_KER}. We concentrate on the second derivatives in $H_{\varepsilon}$ which yield the most singular contributions:
\begin{eqnarray}
&&\Tr((a_{\varepsilon}(u,x)-a_{\varepsilon}(u,w))D_x^2 \tilde p_{\varepsilon}(u,u',x,w))-\Tr((a_{\varepsilon}(u,z)-a_{\varepsilon}(u,w))D_x^2 \tilde p_{\varepsilon}(u,u',z,w))\nonumber\\
&=&\Tr((a_{\varepsilon}(u,x)-a_{\varepsilon}(u,z))D_x^2 \tilde p_{\varepsilon}(u,u',x,w))
-\Tr((a_{\varepsilon}(u,z)-a_{\varepsilon}(u,w))(D_x^2 \tilde p_{\varepsilon}(u,u',z,w)-D_x^2 \tilde p_{\varepsilon}(u,u',x,w)))\nonumber\\
&=:&I+II.\label{DECOUP_E_EPS}
\end{eqnarray}
Then, from \eqref{5},
\begin{eqnarray}
|I|\le C\frac{|x-z|^\gamma}{(u-u')}p_c(u'-u,w-x)\le \frac{C|x-z|^{\gamma/2}}{(u-u')^{1-\gamma/4}}p_c(u'-u,w-x)
\le 
\frac{C|x-z|^{\gamma/2}}{(u-u')^{1-\gamma/4}}p_c(u'-u,w-z), \label{DECOUP_E_EPS_I}
\end{eqnarray}
using that $z\in \bar A $ for the second inequality, again combined with the  convexity inequality $\frac{|x-w|^2}{u'-u}\ge \frac{|z-w|^2}{2(u'-u)}-\frac{|x-z|^2}{u'-u}\ge  \frac{|z-w|^2}{2(u'-u)}-c^2$ for the last one. Now, from the explicit expression of the second order derivatives in \eqref{SEC_DER}, \A{A2} and usual computations we also derive:
\begin{eqnarray}
|II|\le \frac{C|z-w|^\gamma}{(u'-u)}\frac{|z-x|^{\gamma/2}}{(u'-u)^{\gamma/4}}p_c(u'-u,w-z)\le\frac{C|z-x|^{\gamma/2}}{(u'-u)^{1-\gamma/4}}p_c(u'-u,w-z).\label{DECOUP_E_EPS_II}
\end{eqnarray}

This gives \eqref{HOLD_KER} and completes the proof for $|\alpha| \le 2 $.

Let us now turn to $|\alpha|\ge 3 $. In those cases, the singularities induced by the derivatives are not integrable in short time, even if we exploit cancellations. We are thus led to perform integration by parts, deteriorating the bounds since these operations make the derivatives of the mollified coefficients appear.

Recalling $\alpha \in \N^d$, denote by $l $ a multi-index s.t. $|l|=2$ and $\alpha-l\ge 0 $ (where the inequality is to be understood componentwise)
. From equations \eqref{RESTES_N}, \eqref{RESTES_FAR}, we only have to consider the contribution $D_{x}^\alpha R_{\varepsilon}^\tau(s,t,x,y)$. Write:
\begin{eqnarray}
D_x^\alpha R_{\varepsilon}^\tau(s,t,x,y)
&=&D_{x}^{\alpha-l}\int_{s+\tau}^{(t+s)/2} du \int_{\R^d}D_x^l \tilde p_{\varepsilon}(s,u,x,z)\Phi_{\varepsilon}(u,t,z,y)dz\notag\\
 &=&D_{x}^{\alpha-l}\int_{s+\tau}^{(t+s)/2} du \int_{\R^d} g^{l,\varepsilon}(s,u,x,z)\Phi_{\varepsilon}(u,t,z,y)dz,
\label{decomp}
\end{eqnarray}
where $g^{l,\varepsilon}(s,u,x,z):=D_x^l \tilde p_{\varepsilon}(s,u,x,z)$.
Let us write introducing the cancellation term $c_{\varepsilon}^l $ introduced after \eqref{CENTERING_COND}:
\begin{eqnarray}
D_x^\alpha R_{\varepsilon}^\tau(s,t,x,y)=D_x^{\alpha -l}\int_{s+\tau}^{(s+t)/2} du \int_{\R^d} (g^{l,\varepsilon}- c_{\varepsilon}^l)(s,u,x,z)\Phi_{\varepsilon}(u,t,z,y) dz\notag \\
+D_x^{\alpha -l}\int_{s+\tau}^{(s+t)/2} du \int_{\R^d} c_{\varepsilon}^l(s,u,x,z)(\Phi_{\varepsilon}(u,t,z,y)-\Phi_{\varepsilon}(u,t,x,y)  )dz\notag \\
=D_x^{\alpha -l}\int_{s+\tau}^{(s+t)/2} du \int_{\R^d} (g^{l,\varepsilon}-c_{\varepsilon}^l)(s,u,x,x+z)\Phi_{\varepsilon}(u,t,x+z,y) dz\notag\\
+D_x^{\alpha -l}\int_{s+\tau}^{(s+t)/2} du \int_{\R^d} c_{\varepsilon}^l(s,u,x,x+z)(\Phi_{\varepsilon}(u,t,x+z,y)-\Phi_{\varepsilon}(u,t,x,y)  )dz.\notag\\ \label{DEF_R_C}
\end{eqnarray}
The purpose of that change of variable, already performed in \cite{kona:mamm:02}, is that we get integrable time singularities in the contributions $D_x^{\alpha -l} (g^{l,\varepsilon}-c_{\varepsilon}^l)(s,u,x,x+z)$. Anyhow, the mollified coefficients $b_{\varepsilon},\sigma_{\varepsilon} $ have explosive derivatives. From the definition of $g^{l,\varepsilon} $ and \eqref{CTR_DER_MOLLI}  one easily gets that there exists $c,C$ s.t. for all $\alpha ,\ |\alpha|\le 4$:
\begin{equation}
\label{CTR_DER_VARSIGMA}
\begin{split}
|D_x^{\alpha -l} (g^{l,\varepsilon}-c_{\varepsilon}^l)(s,u,x,x+z)|\le \frac{C\varepsilon^{-|\alpha-l|}}{(u-s)^{1-\gamma/2}}p_c(u-s,z),\\
|D_x^{\alpha -l} c_{\varepsilon}^l(s,u,x,x+z)|\le \frac{C\varepsilon^{-|\alpha-l|+\gamma}}{(u-s)}p_c(u-s,z).
\end{split}
\end{equation}
From \eqref{DEF_R_C} and \eqref{CTR_DER_VARSIGMA} it thus
remains to control the terms $D_x^{\alpha-l}\Phi_{\varepsilon}(u,t,z+x,y), D_x^{\alpha-l}(\Phi_{\varepsilon}(u,t,x+z,y)-\Phi_{\varepsilon}(u,t,x,y)) $ which are the most singular ones in $ D_x^\alpha R_{\varepsilon}^\tau(s,t,x,y)$. To this end, we will establish by induction that the following control holds:
\begin{eqnarray}
\label{CTR_IND}
\exists c,C,\ \forall 0\le s<t\le T,\ (x,y)\in (\R^d)^2,\ \forall \beta,\ |\beta|\le 3,\
|D_x^{\beta}H_{\varepsilon}^{(i)}(s,t,x,y)|\le\notag \\
\frac{C^{i}{\varepsilon}^{-|\beta|}}{(t-s)^{|\beta|/2}}(t-s)^{-1+i\gamma/2}\prod_{j=1}^{i-1}B(\frac \gamma 2,1+ (j-1)\frac \gamma 2) p_c(t-s,y-x),
\end{eqnarray}
with the convention that $\prod_{j=1}^0 =1 $.
Observe first that for $|\beta|=0 $ (no derivation), estimate \eqref{CTR_IND} readily follows from \eqref{CTR_H_ITER}.
Let us now suppose $|\beta|>0$. Observe from the definition of $H_{\varepsilon}$ that \eqref{CTR_IND} is satisfied for $i=1 $. Let us assume it holds for a given $i$ and let us prove it for $i+1$.
Write again:
\begin{eqnarray*}
D_x^\beta H_{\varepsilon}^{(i+1)}(s,t,x,y)=\int_{(s+t)/2}^t du \int_{\R^d} D_x^\beta H_{\varepsilon}(s,u,x,z) H_{\varepsilon}^{(i)}(u,t,z,y)dz\\
+D_x^\beta \int_s^{(s+t)/2} du \int_{\R^d} H_{\varepsilon}(s,u,x,x+z)H_{\varepsilon}^{(i)}(u,t,x+z,y)dz=:(R_1^{i,\beta}+R_2^{i,\beta})(s,t,x,y).
\end{eqnarray*}
The term $R_1^{i,\beta}$ is easily controlled by \eqref{CTR_IND} for $\beta=0 $ and the induction hypothesis. Observe also that, from Proposition \ref{PROP_CDER_MC} one derives similarly to \eqref{CTR_DER_VARSIGMA} that:
\begin{eqnarray*}
|D_x^\beta H_{\varepsilon}(s,u,x,x+z)|\le \frac{C\varepsilon^{-|\beta|}}{(u-s)^{1-\gamma/2}}p_c(u-s,z).
\end{eqnarray*}
Together with the induction hypothesis and the Leibniz rule for differentiation, this allows to control $R_2^{i,\beta}$. The controls on $\{R_j^{i,\beta}\}_{j\in \{1,2\}} $ give \eqref{CTR_IND} for $i+1$. We eventually derive (reminding that $|l|=2 $):
\begin{equation}
\label{CTR_DER_NOY_M}
|D_x^{\alpha-l} \Phi_{\varepsilon}(u,t,x+z,y)|\le \frac{C}{(t-u)^{(|\alpha|-2)/2}}\frac{\varepsilon^{-|\alpha|+2}}{(t-u)^{1-\gamma/2}}p_c(t-u,y-(x+z)).
\end{equation}
The spatial H\"older continuity of the derivatives of the kernel $\Phi_{\varepsilon} $ could be checked following the previous steps performed respectively to get the spatial H\"older continuity 
of the kernel and the controls on its derivatives. One gets, on $|z|\le c(t-u)^{1/2} $:
\begin{equation*}
\begin{split}
|D_x^{\alpha-l} \Phi_{\varepsilon}(u,t,x+z,y)-D_x^{\alpha-l} \Phi_{\varepsilon}(u,t,x,y)|
\le \frac{C|z|^{\gamma/2}}{(t-u)^{(|\alpha|-2)/2}}\frac{\varepsilon^{-|\alpha|+2}}{(t-u)^{1-\gamma/4}}p_c(t-u,y-(x+z)),
\end{split}
\end{equation*}
which together with \eqref{CTR_DER_NOY_M}, \eqref{CTR_DER_VARSIGMA}, \eqref{DEF_R_C} gives (proceeding as above for $|z|\ge c(t-u)^{1/2} $):
\begin{eqnarray*}
|D_x^\alpha R_{\varepsilon}^\tau(s,t,x,y)|\le \frac{C\varepsilon^{-|\alpha|+2}(t-s)^{\gamma/2}}{(t-s)^{|\alpha|/2}}p_c(t-s,y-x).
\end{eqnarray*}
The second equation of \eqref{CTR_DER_PERT} follows for $\bar p_{\varepsilon}=p_{\varepsilon} $ from 
the above control and \eqref{RESTES_FAR}, \eqref{RESTES_N}.
Observe that the control for the derivative w.r.t. $y$ has additional singularity in $\varepsilon$. This is clear since we directly differentiate the frozen mollified coefficients.
Now the statements readily hold for $p_{\varepsilon}^d$, since the \textit{integration in time} played no role in the previous computations. For $p_{\varepsilon}^h $, the only point that should be totally justified is the smoothing property and H\"older continuity of the discrete Kernel $\Phi_{\varepsilon}^{h}(t_i,t_j,x,y):=\sum_{r=1}^{j-i}H_{\varepsilon}^{h,(r)} (t_i,t_j,x,y)$. The smoothing property, equivalent of \eqref{CTR_IND}, has been investigated in \cite{lema:meno:10}. The spatial H\"older continuity can be derived as above.\\

\subsubsection{Proof under \A{A${}_{PS}$}}
Let us now turn to the proof of the heat kernel bounds for $p_\varepsilon$ under \A{A${}_{PS}$}, which almost follows the same lines. Observe first that the result for $|\alpha|=1 $ still follows from \eqref{5} and \eqref{10}. The key point is again that the derivative of the Gaussian kernel yields an integrable singularity. For $|\alpha|=2 $, we still separate the contribution $R_\varepsilon(s,t,x,y) $ as in \eqref{RESTES_N} and again focus on  $\lim_{\tau\rightarrow 0} D_x^\alpha R_\varepsilon^\tau(s,t,x,y) $ which is the only term yielding a potential singularity. With then notations of \eqref{DECOUP_RNEPS}, it is sufficient to investigate $R_\varepsilon^{\tau,2}(s,t,x,y) $. Indeed, under \A{A${}_{PS}$}, equation \eqref{CTR_R1NEPS} actually holds with $\gamma=1 $. We recall that to control $R_\varepsilon^{\tau,2}(s,t,x,y) $, the key estimate was \eqref{DIAG_HOLDER_KERNEL}. We aim at proving the different control, for all $u\in [s,t) $, for all $\eta\in (0,1] $:
\begin{equation}
\label{DIAG_HOLDER_KERNEL_BE}
| \Phi_{\varepsilon}(u,t,x,y)-\Phi_{\varepsilon}(u,t,z,y)|\le C
\varepsilon 
^{-\eta}\frac{|x-z|^{\eta}}{(t-u)^{3/4}}p_c(t-u,y-z),
 \end{equation} 
on $A:=\{ z\in \R^d: |x-z|\le (t-s)^{1/2}\wedge \varepsilon\} $.
Then, we can derive from \eqref{5}, \eqref{DECOUP_RNEPS} and \eqref{DIAG_HOLDER_KERNEL_BE}:
\begin{eqnarray*}
|R_{\varepsilon}^{\tau,2}(s,t,x,y)|
\le C 
\varepsilon
^{-\eta} \int_{s+\tau}^{(s+t)/2}du\int_{A}\frac{|x-z|^{\eta}}{(u-s)}p_c(u-s,z-x)\frac{1}{(t-u)^{3/4}}p_c(t-u,y-z)dz\\
+C((t-s)^{1/2}\wedge \varepsilon)^{-\eta} \int_{s+\tau}^{(s+t)/2}du\int_{A^C}\frac{|x-z|^{\eta}}{(u-s)}p_c(u-s,z-x)\{|\Phi_{\varepsilon}(u,t,z,y)|+|\Phi_{\varepsilon}(u,t,x,y)|\}dz.
\end{eqnarray*}
Since the drift $b_\varepsilon $ is uniformly bounded, uniformly in $\varepsilon\in [0,1] $, we have under \A{A${}_{PS}$} the following usual control on the parametrix kernel (see e.g. Section 2 in \cite{kona:kozh:meno:15:1}):
\begin{equation}
\left\vert H_\varepsilon(u,t,z,y)\right\vert \leq \frac{c_1}{%
(t-u)^{1/2}}p_{c}(t-u,y-z).  \label{8_PS}
\end{equation}%
Equation \eqref{8_PS} for $H_{\varepsilon}$ then yields 
\begin{equation}
\label{CTR_H_ITER_PS}
|H_{\varepsilon}^{(r)}(s,t,x,y)|
\leq c_1^{r}\prod_{i=1}^{r-1} B(\frac{1 }{2},1+(i-1)\frac{1 }{2})
p_{c}(t-s,y-x) (t-s)^{-1+\frac{r }{2}},
\end{equation}
again with the convention $\prod_{i=1}^0=1 $. We thus derive $|\Phi_{\varepsilon}(u,t,z,y)|\le \frac{C}{(t-u)^{1/2}}p_c(t-u,y-z) $ and $ |\Phi_{\varepsilon}(u,t,x,y)|\le \frac{C}{(t-u)^{1/2}}p_c(t-u,y-x) $. We finally get on the considered time set:
\begin{equation*}
\begin{split}
|R_{\varepsilon}^{\tau,2}(s,t,x,y)|&\le C((t-s)^{1/2} \wedge \varepsilon)^{-\eta} p_c(t-s,y-x)\int_{s+\tau}^{(s+t)/2}du \frac{1}{(u-s)^{1-\eta/2}}\frac{1}{(t-u)^{3/4}}\\
&\le \frac{C((t-s)^{1/2}\wedge \varepsilon)^{-\eta}}{\eta (t-s)^{3/4-\eta/2}}p_c(t-s,y-x).
\end{split}
\end{equation*}
It remains to establish \eqref{DIAG_HOLDER_KERNEL_BE}. From the definition of $\Phi_{\varepsilon}$ and the smoothing effect of the kernel $H_{\varepsilon}$ in \eqref{CTR_H_ITER_PS}, it suffices to prove that on $\bar A:=\{ z\in \R^d: |x-z|\le c[ (u'-u)^{1/2}\wedge \varepsilon]\} $:
\begin{equation}
\label{HOLD_KER_PS}
|H_{\varepsilon}(u,u',x,w)-H_{\varepsilon}(u,u',z,w)|\le C\{ \frac{|x-z|^{\eta}}{(u'-u)^{3/4}}((u'-u)^{1/2}\wedge \varepsilon)^{-\eta} \}p_c(u'-u,w-z), 
\end{equation}
for $u'\in (u,t],\ u\in [s,(s+t)/2]$. The contributions associated with $z\in \bar A^C $ can be handled as above. To establish the above control we focus on the first order terms involving the regularized coefficient with initial discontinuities. Indeed the second order contribution can be analyzed as in \eqref{DECOUP_E_EPS}, \eqref{DECOUP_E_EPS_I}, \eqref{DECOUP_E_EPS_II}, taking $\gamma=1 $ in those expressions. In particular, the time singularity in $(u-u')^{3/4} $ in \eqref{HOLD_KER_PS}  precisely comes from those terms. Recalling that under \A{A${}_{PS}$} the driftless proxy does not depend on $\varepsilon $ (since the diffusion is smooth, see \eqref{SCHEMES_AND_PARAM} in which one has $\sigma_\varepsilon=\sigma $ under \A{A${}_{PS}$}), we denote its density by $\tilde p $ and write:
\begin{eqnarray*}
\langle b_\varepsilon(u,x),D_x \tilde p(u,u',x,w)\rangle-\langle b_\varepsilon(u,z),D_x\tilde p(u,u',z,w)\rangle\\
=\langle b_\varepsilon(u,x)-b_\varepsilon(u,z),D_x\tilde p(u,u',x,w)\rangle+\langle b_\varepsilon(u,z),D_x\tilde p(u,u',x,w)-D_x\tilde p(u,u',z,w) \rangle:=I+II.
\end{eqnarray*}
On the one hand, from the mean value theorem and recalling that $|D_xb_\varepsilon|_\infty\le C\varepsilon^{-1}\le C ((u'-u)^{1/2}\wedge \varepsilon)^{-1} $  we get:
\begin{eqnarray*}
|I|&\le& \frac{C}{(u'-u)^{1/2}} \Big\{ 2|b|_\infty\Big(\frac{|x-z|}{(u'-u)^{1/2}\wedge \varepsilon}\Big)^\eta\I_{|x-z|>(u'-u)^{1/2}\wedge \varepsilon} +\varepsilon^{-1}|x-z|\I_{|x-z|\le (u'-u)^{1/2} \wedge \varepsilon}\Big\}  p_c(u'-u,w-x) \\
 &\le& C ((u'-u)^{1/2}\wedge \varepsilon)^{-\eta}\frac{|x-z|^\eta}{(u'-u)^{1/2}}p_c(u'-u,w-x)\le C ((u'-u)^{1/2}\wedge \varepsilon)^{-\eta}\frac{|x-z|^\eta}{(u'-u)^{1/2}}p_c(u'-u,w-z),
\end{eqnarray*}
using again a convexity inequality for the last control, recalling that $z\in \bar A$. On the other hand still from the mean value Theorem and usual controls on the derivatives of the Gaussian density:
\begin{eqnarray*}
|II|&\le& \frac{C|x-z|}{(u'-u)}\int_0^1 p_c(u'-u,w-\{z+\lambda (x-z)\})d\lambda\le \frac{C|x-z|^{\eta}}{(u'-u)^{(1+\eta)/2}}p_c(u'-u,w-z) \\
&\le& \frac{C|x-z|^{\eta}}{(u'-u)^{1/2}((u'-u)^{1/2}\wedge \varepsilon)^{\eta}}p_c(u'-u,w-z) .
\end{eqnarray*}
The above estimates give \eqref{HOLD_KER_PS} and concludes the proof for  $|\alpha|=2 $. \\

Let us turn to $|\alpha|\ge 3 $. The idea is again to proceed 	as under \textbf{(A${}_H$)}, up to a suitable modification of the key estimate \eqref{CTR_IND} which can now be localized and becomes for all $q>d$:
\begin{eqnarray}
\label{CTR_IND_BE}
\exists c,C,\ \forall 0\le s<t\le T,\ (x,y)\in (\R^d)^2,\ \forall \beta,\ |\beta|\le 3,\
|D_x^{\beta}H_{\varepsilon}^{(i)}(s,t,x,y)|\le\notag \\
\frac{C^{i}{(\varepsilon}^{-|\beta|}\I_{x\in V_\varepsilon({\mathcal I})}+\varepsilon^{-|\beta|+1/q})}{(t-s)^{|\beta|/2}}(t-s)^{-1+i\alpha(q)}\prod_{j=1}^{i-1}B(\alpha(q), \alpha(q) j) p_c(t-s,y-x),\ \alpha(q)=\frac 12(1-\frac dq),
\end{eqnarray}
with $\prod_{j=1}^0=1 $.
We again proceed by induction. Observe first that for $|\beta|=0 $ (no derivation), estimate \eqref{CTR_IND_BE} readily follows from \eqref{8_PS}.
Let us now suppose $|\beta|>0$. Observe  as well from the definition of $H_{\varepsilon}$ that \eqref{CTR_IND_BE} is satisfied for $i=1 $. Let us assume it holds for a given $i$ and let us prove it for $i+1$.
Write again:
\begin{eqnarray*}
D_x^\beta H_{\varepsilon}^{(i+1)}(s,t,x,y)=\int_{(s+t)/2}^t du \int_{\R^d} D_x^\beta H_{\varepsilon}(s,u,x,z) H_{\varepsilon}^{(i)}(u,t,z,y)dz\\
+D_x^\beta \int_s^{(s+t)/2} du \int_{\R^d} H_{\varepsilon}(s,u,x,x+z)H_{\varepsilon}^{(i)}(u,t,x+z,y)dz=:(R_1^{i,\beta}+R_2^{i,\beta})(s,t,x,y).
\end{eqnarray*}
The term $R_1^{i,\beta}$ is easily controlled by \eqref{CTR_IND_BE} that holds from the induction hypothesis for $i_0=1$ (direct differentiation of $H_\varepsilon$) and $\beta=0 $ for the considered $i$ (no differentiation of $H_{\varepsilon}^{(i)} $). Observe also that, similarly to \eqref{CTR_DER_VARSIGMA}, one has:
\begin{eqnarray}
\label{CTR_DER_VARSIGMA_BE}
|D_x^\beta H_{\varepsilon}(s,u,x,x+z)|\le \frac{C(\varepsilon^{-|\beta|}\I_{x\in V_\varepsilon({\mathcal I})}+1) }{(u-s)^{1/2}}p_c(u-s,z).
\end{eqnarray}
Now, from the Leibniz rule for differentiation, \eqref{CTR_DER_VARSIGMA_BE} and the induction hypothesis, we have:
\begin{eqnarray}
|R_2^{i,\beta}(s,t,x,y)|\le C^{i+1} \prod_{j=1}^{i-1}B(\alpha(q), \alpha(q) j)\Big\{ \sum_{\tilde \beta, |\tilde \beta|\le |\beta|}
\left(\begin{array}{c} |\beta|\\|\tilde \beta|\end{array}\right)
\int_{s}^{(s+t)/2} \frac{du (t-u)^{-1+i\alpha(q)}}{(u-s)^{1/2}(t-u)^{(|\beta|-|\tilde \beta|)/2}}\nonumber\\
\times \int_{\R^d} p_c(u-s,z)(\varepsilon^{-|\tilde \beta|}\I_{x\in V_\varepsilon({\mathcal I})}+1)(\varepsilon^{-(|\beta|-|\tilde \beta|)}\I_{x+z\in V_\varepsilon({\mathcal I})}+\varepsilon^{-(|\beta|-|\tilde \beta|)+1/q}) p_c(t-u,y-x-z)dz\Big\}\nonumber\\
\le C^{i+1} \prod_{j=1}^{i-1}B(\alpha(q), \alpha(q) j)\Big\{ \sum_{\tilde \beta, |\tilde \beta|\le |\beta|}
\left(\begin{array}{c} |\beta|\\|\tilde \beta|\end{array}\right)
\int_{s}^{(s+t)/2} \frac{du (t-u)^{-1+i\alpha(q)}}{(u-s)^{1/2}(t-u)^{(|\beta|-|\tilde \beta|)/2}}\nonumber\\
\times [p_c(t-s,y-x) (\varepsilon^{-|\beta|}\I_{x\in V_\varepsilon({\mathcal I})}+\varepsilon^{-|\beta|+1/q})+\varepsilon^{-|\beta|+1/q} (\int_{\R^d} p_c(u-s,z)^{\bar q}p_c(t-u,y-x-z)^{\bar q}dz)^{1/\bar q}\Big\},\label{HOLDER_REGU}
\end{eqnarray}
denoting by $\bar q>1$ the conjugate of $q$, $q^{-1}+\bar q^{-1}=1 $ (see also Section \ref{SEC_PROOF_AS} for similar arguments). Recall now that:
\begin{eqnarray*}
(\int_{\R^d} p_c(u-s,z)^{\bar q}p_c(t-u,y-x-z)^{\bar q}dz)^{1/\bar q}&=& \Big(\frac{c(t-s)}{(2\pi)(u-s)(t-u)}\Big)^{d/(2q)
}\bar q^{-d/(2\bar q)}  p_c(t-s,y-x)\\ 
&\le & C (u-s)^{-d/(2q)}p_c(t-s, y-x),
\end{eqnarray*}
for $u\in [s,(s+t)/2] $.
Hence, 
\begin{eqnarray*}
|R_2^{i,\beta}(s,t,x,y)|\le \frac{C^{i+1}}{(t-s)^{|\beta|/2}} \prod_{j=1}^{i-1}B(\alpha(q), \alpha(q) j)\Big\{ \int_{s}^{(s+t)/2} \frac{du (t-u)^{-1+i\alpha(q)}}{(u-s)^{1/2(1+d/q)}}\Big\}\\  
\times p_c(t-s,y-x) (\varepsilon^{-|\beta|}\I_{x\in V_\varepsilon({\mathcal I})}+\varepsilon^{-|\beta|+1/q})\\
\le \frac{C^{i+1}}{(t-s)^{|\beta|/2}} \prod_{j=1}^{i-1}B(\alpha(q), \alpha(q) j)(t-s)^{-1+(i+1)\alpha(q)}\int_{0}^{1/2}(1-u)^{-1+i\alpha(q)}u^{-1+\alpha(q)}du \\
\times p_c(t-s,y-x) (\varepsilon^{-|\beta|}\I_{x\in V_\varepsilon({\mathcal I})}+\varepsilon^{-|\beta|+1/q}).
\end{eqnarray*}

The controls on $\{R_j^{i,\beta}\}_{j\in \{1,2\}} $ give \eqref{CTR_IND_BE} for $i+1$.

Estimate \eqref{CTR_IND_BE} yields for every multi-index $l,\ |l|=2$:
\begin{equation}
\label{CTR_DER_NOY}
|D_x^{\alpha-l} \Phi_{\varepsilon}(u,t,x+z,y)|\le \frac{C_q}{(t-u)^{(|\alpha|-2)/2}}\frac{\I_{x+z\in V_\varepsilon({\mathcal I})}\varepsilon^{-|\alpha|+2}+\varepsilon^{-|\alpha|+2+1/q}}{(t-u)^{1-\alpha(q)}}p_c(t-u,y-(x+z)).
\end{equation}
The spatial H\"older continuity of the derivatives of the kernel $\Phi_{\varepsilon} $ could be checked following the previous steps performed respectively to get the spatial H\"older continuity 
of the kernel and the controls on its derivatives. One gets, on $|z|\le c\{(t-u)^{1/2}\wedge \varepsilon\} $ for all $\eta\in (0,1] $:
\begin{equation}
\label{DIFF_PHI_PS}
\begin{split}
|D_x^{\alpha-l} \Phi_{\varepsilon}(u,t,x+z,y)-D_x^{\alpha-l} \Phi_{\varepsilon}(u,t,x,y)|\\
\le \frac{C_q
\varepsilon
^{-\eta}|z|^{\eta}}{(t-u)^{(|\alpha|-2)/2}}\frac{\varepsilon^{-|\alpha|+2}\I_{x+z\in V_\varepsilon( {\mathcal I} ) }+\varepsilon^{-|\alpha|+2+1/q}}{(t-u)^{1-\alpha(q)+\eta/2}}p_c(t-u,y-(x+z)).
\end{split}
\end{equation}
Now, equation \eqref{DEF_R_C} still holds under \A{A${}_{PS} $}, with $g^{l,\varepsilon}=g^{l}, c_{\varepsilon}^l=c^l $, i.e. the driftless proxy does not depend on $\varepsilon $. Also, the smoothness assumption on $\sigma $ allows to improve \eqref{CTR_DER_VARSIGMA}. Precisely, there exist $c,C$ s.t. for all $\alpha ,\ |\alpha|\le 4 $:
\begin{equation*}
|D_x^{\alpha -l} (g^{l}-c^l)(s,u,x,x+z)|\le \frac{C}{(u-s)^{1/2}}p_c(u-s,z),\
|D_x^{\alpha -l} c^l(s,u,x,x+z)|\le \frac{C}{(u-s)}p_c(u-s,z),
\end{equation*}
which together with \eqref{CTR_DER_NOY}, \eqref{DIFF_PHI_PS},  \eqref{DEF_R_C} and choosing $\alpha(q)>\eta $ gives (proceeding as above for $|z|\ge c\{(t-u)^{1/2}\wedge \varepsilon\} $):
\begin{eqnarray*}
|D_x^\alpha R_{\varepsilon}^\tau(s,t,x,y)|\le \frac{C_{\eta,q}\varepsilon^{-(\eta+|\alpha|)+2+1/q}(t-s)^{\eta/2}}{(t-s)^{|\alpha|/2}}p_c(t-s,y-x).
\end{eqnarray*}
The controls on the derivatives w.r.t. to the forward variables are derived similarly. We here simply illustrate on the first term $\tilde p_\varepsilon\otimes H^\varepsilon(s,t,x,y) $ of the parametrix series how the derivatives must be handled. The stated controls would follow from inductions similar to the previous ones.
Write for a given multi-index $\beta $:
\begin{eqnarray*} 
D_y^\beta \Big(\tilde p_\varepsilon\otimes H_\varepsilon(s,t,x,y)\Big)\\
=\int_s^{(s+t)/2}du \int_{\R^d}\tilde p(s,u,x,z) D_y^\beta \{ \langle b_\varepsilon(u,z),D_z\tilde p(u,t,z,y)\rangle+\frac{1}{2}{\rm Tr}\{(a(u,z)-a(u,y))D_z^2\tilde p(u,t,z,y) \}\} dz+\\
\lim_{\tau\downarrow 0}\int_{(s+t)/2}^{t-\tau}du\int_{\R^d}D_y^\beta \Big(\tilde p(s,u,x,z)[\langle b_\varepsilon(u,z),D_z\tilde p(u,t,z,y)\rangle\\
+\frac{1}{2}{\rm Tr}\{(a(u,z)-a(u,y))D_z^2\tilde p(u,t,z,y) \}] \Big) dz:=(D_1^\beta+D_2^\beta)(s,t,x,y).
\end{eqnarray*} 
We readily get from the controls of \eqref{5} that:
\begin{equation}
\label{CTR_D1_B}
 |D_1^\beta(s,t,x,y)|\le \frac{C}{(t-s)^{(|\beta|-1)/2}} p_c(t-s,y-x),
 \end{equation}
which is the expected control. Since $a $ is smooth the terms involving the second derivatives w.r.t. $z$ in $D_2^\beta$ can be handled performing the change of variables $z'=z+y$ as above (see also \cite{kona:mamm:02} under the current smoothness assumption on the diffusion coefficient). Let us thus focus on the contribution:
\begin{eqnarray*}
D_{21}^\beta(s,t,x,y):=\lim_{\tau\downarrow 0}\int_{(s+t)/2}^{t-\tau}du\int_{\R^d}D_y^\beta \Big(\tilde p(s,u,x,z)\langle b_\varepsilon(u,z),D_z\tilde p(u,t,z,y)\rangle\Big) dz.
\end{eqnarray*}
Consider first the case $|\beta|=1$. Write:
\begin{eqnarray}
D_{21}^{\beta,\tau}(s,t,x,y)&:=&\int_{(s+t)/2}^{t-\tau}du\int_{\R^d}D_y^\beta \Big(\tilde p(s,u,x,z)\langle b_\varepsilon(u,z),D_z\tilde p(u,t,z,y)\rangle \Big) dz\notag\\
&=&\int_{(s+t)/2}^{t-\tau}du\int_{\R^d}\tilde p(s,u,x,z) \langle b_\varepsilon(u,z),D_y^\beta D_z\tilde p(u,t,z,y) \rangle dz\notag\\
&=&\int_{(s+t)/2}^{t-\tau} du\int_{\R^d}[\tilde p(s,u,x,z)-\tilde p(s,u,x,y)] \langle b_\varepsilon(u,z),D_y^\beta D_z\tilde p(u,t,z,y) \rangle dz\notag\\
&&+\int_{(s+t)/2}^{t-\tau} du \int_{\R^d}\tilde p(s,u,x,y) \langle b_\varepsilon(u,z)-b_\varepsilon(u,y),D_y^\beta D_z\tilde p(u,t,z,y) \rangle dz\notag\\
&=:&[D_{211}^{\beta,\tau}+D_{212}^{\beta,\tau}](s,t,x,y),\label{DEC_D_21}
\end{eqnarray}
recalling that for all $y\in \R^d $, $\int_{\R^d} D_z \tilde p(u,t,z,y)dz=0$, so that $D_y^\beta \int_{\R^d} D_z \tilde p(u,t,z,y)dz=0 $, for the last but one equality.
Still from the controls of \eqref{5}, we readily get:
\begin {eqnarray*}
|D_{211}^{\beta,\tau}|\le \frac{C}{(t-s)^{1/2}}\int_{(s+t)/2}^{t-\tau} du\int_{\R^d}
|z-y|\Big\{\int_0^1 p_c(u-s,y-x+\lambda(z-y)) d\lambda \I_{|z-y|\le (t-s)^{1/2}}\\
+(p_c(u-s,z-x)+p_c(u-s,y-x) )\I_{|z-y|>(t-s)^{1/2}} \Big\}\frac{1}{(t-u)}p_c(t-u,y-z)dz
\le Cp_c(t-s,y-x).
\end{eqnarray*}
On the other hand:
\begin{eqnarray*}
|D_{212}^{\beta,\tau}](s,t,x,y)| \le C p_c(t-s,y-x)\int_{(s+t)/2}^{t-\tau} du \int_{\R^d} \{\I_{|z-y|\le \varepsilon} \frac{|z-y|}{\varepsilon}+\I_{|z-y|>\varepsilon}\frac{|z-y|^\eta}{\varepsilon^\eta} \} \frac{1}{(t-u)}p_c(t-u,y-z)dz\\
\le \frac C\eta \varepsilon^{-\eta}(t-s)^{\eta/2}p_c(t-s,y-x),\ \eta\in (0,1].
\end{eqnarray*}
We therefore eventually derive from  the above controls, \eqref{DEC_D_21} and \eqref{CTR_D1_B} that for $|\beta|=1 $
\begin{eqnarray*}
D_y^\beta \Big( \tilde p\otimes H^\varepsilon(s,t,x,y)\Big)\le Cp_c(t-s,y-x)\{ 1+\frac C\eta\varepsilon^{-\eta}(t-s)^{\eta/2}\}.
\end{eqnarray*}
Take now $|\beta|\ge 2 $, and let $l $ be a multi-index s.t. $|l|=1$ and $\beta-l\ge 0 $. Set for all $0\le u<t\le T, (z,y)\in \R^d,\ g_l(u,t,z,y):=D_y^lD_z\tilde p(u,t,z,y) $.
Observe that there exists $C\ge 1$, $|g_l(u,t,z,y)|\le C(t-u)^{-1}p_c(t-u,y-z) $ and also, similarly to \eqref{CTR_DER_VARSIGMA_BE}, for all multi-index $\tilde \beta,\ |\tilde \beta|\le 4 $, $ |D_y^{\tilde \beta}g_l(u,t,z+y,y)|\le C(t-u)^{-1}p_c(t-u,z)$.
 Rewrite now from \eqref{DEC_D_21}:
\begin{eqnarray*}
D_{21}^{\beta,\tau}(s,t,x,y)=D_y^{\beta-l}\int_{(s+t)/2}^{t-\tau}du\int_{\R^d}(\tilde p(s,u,x,z)-\tilde p(s,u,x,y))\langle b_\varepsilon(u,z),g_l(u,t,z,y)\rangle \Big) dz\notag\\
+D_y^{\beta-l}\int_{(s+t)/2}^{t-\tau}du\int_{\R^d}\tilde p(s,u,x,y)\langle b_\varepsilon(u,z)-b_\varepsilon(u,y),g_l(u,t,z,y)\rangle  dz,
\end{eqnarray*}
recalling that $\int_{\R^d} g_l(u,t,z,y)dz=0 $ for the last equality. Now,
\begin{eqnarray*}
|D_{21}^{\beta,\tau}(s,t,x,y)|=\\
\Big|D_y^{\beta-l}\int_{(s+t)/2}^{t-\tau}du\int_{\R^d}(\tilde p(s,u,x,z+y)-\tilde p(s,u,x,y))\langle b_\varepsilon(u,z+y),g_l(u,t,z+y,y)\rangle \Big) dz\notag\\
+D_y^{\beta-l}\int_{(s+t)/2}^{t-\tau}du\int_{\R^d}\tilde p(s,u,x,y)\langle b_\varepsilon(u,z+y)-b_\varepsilon(u,y),g_l(u,t,z+y,y)\rangle  dz\Big|\\
\le C\!\!\!\!\!\sum_{{\tiny \begin{array}{c}\!\!\!\!\!\!\!\!\!\!
\beta_1,\beta_2,\beta_3,\\
 \sum_{i=1}^3 |\beta_i|=|\beta|-1
 \end{array}}}\!\!\!\!\!\!\!\!\!\!\!\!\!\!\!\!\!\!\!(|\beta_1|,|\beta_2|,|\beta_3|)!\sum_{i=1}^d\int_{(s+t)/2}^{t-\tau}du\Bigg(\int_{\R^d}|D_y^{\beta_1}\tilde p(s,u,x,z+y)-D_y^{\beta_1}\tilde p(s,u,x,y)|\\
 \times| D_y^{\beta_2}b_\varepsilon^i(u,z+y)|  |D_y^{\beta_3}g_l^i(u,t,z+y,y)| dz\notag\\
+\int_{(s+t)/2}^{t-\tau}du\int_{\R^d}|D_{y}^{\beta_1}\tilde p(s,u,x,y)| |D_y^{\beta_2} b_\varepsilon^i(u,z+y)-D_y^{\beta_2}b_\varepsilon^i(u,y)| |D_y^{\beta_3 }g_l^i(u,t,z+y,y)|  dz\Bigg),
\end{eqnarray*}
where $(|\beta_1|,|\beta_2|,|\beta_3|)!=\frac{(\sum_{i=1}^3| \beta_i|)!}{\prod_{i=1}^3 (|\beta_i|!)}$ stands for the multinomial coefficients with entries $(|\beta|_i)_{i\in \leftB 1,3\rightB} $. 
Recall as well from \eqref{THE_DER_BEPS_BETA} that we have the following control:
\begin{eqnarray*}
|D_y^{\beta_2} b_\varepsilon(u,z+y)-D_y^{\beta_2}b_\varepsilon(u,y)|\le C \Big (\big (1+\varepsilon^{-|\beta|+1}(\I_{y+z\in V_{\varepsilon}({\mathcal I})}+\I_{y\in V_{\varepsilon}({\mathcal I})} \big)\frac{|z|}{(t-s)^{1/2}}\I_{|z|> (t-s)^{1/2}}\\
+ \big(  (1+\varepsilon^{-|\beta|}\I_{y\in V_{2\varepsilon}({\mathcal I})})|z| \I_{|z|\le \varepsilon}+ \big (1+\varepsilon^{-|\beta|+1}(\I_{y+z\in V_{\varepsilon}({\mathcal I})}+\I_{y\in V_{\varepsilon}({\mathcal I})} \big)(\frac{|z|}{\varepsilon})^\eta\I_{|z|>\varepsilon}\big) \I_{|z|\le(t-s)^{1/2}}\Big)\\
\le C\Big (\big (1+\varepsilon^{-|\beta|+1}(\I_{y+z\in V_{\varepsilon}({\mathcal I})}+\I_{y\in V_{\varepsilon}({\mathcal I})} \big)\frac{|z|}{(t-s)^{1/2}}\I_{|z|> (t-s)^{1/2}}\\
+ \big(  (1+\varepsilon^{-|\beta|+1}\I_{y\in V_{2\varepsilon}({\mathcal I})}) \I_{|z|\le \varepsilon}+ \big (1+\varepsilon^{-|\beta|+1}(\I_{y+z\in V_{\varepsilon}({\mathcal I})}+\I_{y\in V_{\varepsilon}({\mathcal I})} \big)\I_{|z|>\varepsilon}\big)\big) (\frac{|z|}{\varepsilon})^\eta \I_{|z|\le(t-s)^{1/2}}\Big).
\end{eqnarray*}
Thus,
\begin{eqnarray*}
|D_{21}^{\beta,\tau}(s,t,x,y)|\\
\le C \!\!\!\!\!\sum_{{\tiny \begin{array}{c}\!\!\!\!\!\!\!\!\!\!
\beta_1,\beta_2,\beta_3,\\
 \sum_{i=1}^3 |\beta_i|=|\beta|-1
 \end{array}}}\!\!\!\!\!\!\!\!\!\!\!\!\!\!\!\!\!\!\!(|\beta_1|,|\beta_2|,|\beta_3|)!\int_{(s+t)/2}^{t-\tau}du\int_{\R^d}\int_0^1d\lambda\Big\{ \frac{|z|}{(u-s)^{(|\beta_1|+1)/2}}p_c(u-s,y-x+\lambda z)\I_{|z|\le (t-s)^{1/2}}\\
 + \frac{|z|}{(t-s)^{1/2}}(p_c(u-s,y+z-x)+p_c(u-s,y-x))\I_{|z|>(t-s)^{1/2}} \Big\} (1+\varepsilon^{-|\beta|+1}\I_{y+z\in V_{\varepsilon}({\mathcal I}) })  \frac{1}{t-u}p_c(t-u,z) dz\notag\\
+\int_{(s+t)/2}^{t-\tau}du\int_{\R^d} \frac{1}{(t-s)^{|\beta_1|/2}}p_c(t-s,y-x) \Big\{(1+\varepsilon^{-|\beta|+1}(\I_{y+z\in V_{\varepsilon}({\mathcal I})}+\I_{y\in V_{\varepsilon}({\mathcal I})} ))\frac{|z|}{(t-s)^{1/2}}\I_{|z|> (t-s)^{1/2}}\\
+\{(1+\varepsilon^{-|\beta|+1}\I_{y\in V_{2\varepsilon}({\mathcal I})}) \I_{|z|\le \varepsilon}+ (1+\varepsilon^{-|\beta|+1}(\I_{y+z\in V_{\varepsilon}({\mathcal I})}+\I_{y\in V_{\varepsilon}({\mathcal I})}))\I_{|z|> \varepsilon}\} \I_{|z|\le (t-s)^{1/2}}(\frac{|z|}{\varepsilon})^{\eta}\Big\}\\
 \frac{1}{t-u}p_c(t-u,z)|  dz\le \frac{C
 }{(t-s)^{|\beta|}}p_c(t-s,y-x)(1+\varepsilon^{-|\beta|+1}(\frac{\varepsilon^{-\eta}}{\eta}+\frac{\varepsilon^{1/q}}{\alpha(q)})),
\end{eqnarray*}
recalling that the contribution in $\frac{\varepsilon^{1/q}}{\alpha(q)} $ comes from the terms involving $\frac{\I_{y+z\in V_{\varepsilon}({\mathcal I})}}{(t-u)^{1/2}} $ that can be handled using H\"older inequalities similarly to \eqref{HOLDER_REGU}.
This gives the stated control.
\subsection{Proof of Proposition \ref{PROP_C_D}}
Write similarly to the proof of Theorem 2.1 in \cite{kona:mamm:02}:
\begin{eqnarray}
(p_{\varepsilon}-p_{\varepsilon}^d)(t_i,t_j,x,y)=(p_{\varepsilon}\otimes H_{\varepsilon}-p_{\varepsilon}\otimes_h H_{\varepsilon})(t_i,t_j,x,y)+(p_{\varepsilon}-p_{\varepsilon}^d)\otimes_h H_{\varepsilon} (t_i,t_j,x,y)\notag\\
=\sum_{r\ge 0}(p_{\varepsilon}\otimes H_{\varepsilon}-p_{\varepsilon}\otimes_h H_{\varepsilon})\otimes_h H_{\varepsilon}^{(r)}(t_i,t_j,x,y),
\label{THE_DECOUP}
\end{eqnarray}
where we apply iteratively the first equality to get the second one. From \eqref{CTR_H_ITER} under \A{A${}_H$} and \eqref{CTR_H_ITER_PS} under \A{A${}_{PS}$}, the key point is thus to control $p_{\varepsilon}\otimes H_{\varepsilon}-p_{\varepsilon}\otimes_h H_{\varepsilon}$. 
Write:
\begin{eqnarray*}
(p_{\varepsilon}\otimes H_{\varepsilon}-p_{\varepsilon}\otimes_h H_{\varepsilon})(t_i,t_j,x,y)\notag\\
=\sum_{k=0}^{j-i-1}\int_{t_{i+k}}^{t_{i+k+1}} du\int_{\R^d} \{p_{\varepsilon}(t_i,u,x,z)H_{\varepsilon}(u,t_j,z,y)
-p_{\varepsilon}(t_i,t_{i+k},x,z)H_{\varepsilon}(t_{i+k},t_j,z,y)\}dz
\end{eqnarray*}
\begin{eqnarray}
=\sum_{k=0}^{j-i-1}\Big\{\int_{t_{i+k}}^{t_{i+k+1}} du\int_{\R^d} \{[p_{\varepsilon}(t_i,u,x,z)-p_{\varepsilon}(t_i,t_{i+k},x,z)]H_{\varepsilon}(u,t_j,z,y)\}dz\notag\\
+\int_{t_{i+k}}^{t_{i+k+1}} du\int_{\R^d} \{p_{\varepsilon}(t_i,t_{i+k},x,z)[H_{\varepsilon}(u,t_j,z,y)-H_{\varepsilon}(t_{i+k},t_j,z,y)]\}dz\Big\}
=:(D_{\varepsilon}^{d,1}+D_{\varepsilon}^{d,2})(t_i,t_j,x,y).
\label{DECOUP_INT_DER}
\end{eqnarray}
\begin{trivlist}
\item[$\bullet$] Bounds for the term $D_{\varepsilon}^{d,1}$.
\begin{trivlist}
\item[-] Under \A{A${}_H$}, for $k=0$, one readily gets from \eqref{8}:
\begin{eqnarray}
\int_{t_i}^{t_{i+1}} du \Big|\int_{\R^d} \{[p_{\varepsilon}(t_i,u,x,z)-p_{\varepsilon}(t_i,t_{i},x,z)]H_{\varepsilon}(u,t_j,z,y)\}dz\Big|\le C p_c(t_j-t_i,y-x)\int_{t_i}^{t_{i+1}}\frac{du}{(t_j-u)^{1-\gamma/2}}\nonumber\\
\le \frac{Ch}{(t_j-t_i)^{1-\gamma/2}} p_c(t_j-t_i,y-x)\le Ch^{\gamma/2}p_c(t_j-t_i,y-x).\label{K0}
\end{eqnarray}

On the other hand, from the parametrix expansion of the density in \eqref{SERIE_P_EPS}, one gets that for all $\eta\in (0,\gamma) $ and $k\ge 1 $:
\begin{equation}
\label{HOLD_TEMPS}
|p_\varepsilon(t_i,u,x,z)-p_{\varepsilon}(t_i,t_{i+k},x,z)|\le \frac{C}{\eta}\frac{(u-t_{i+k})^{(\gamma-\eta)/2}}{(u-t_i)^{\gamma/2}}p_c(u-t_i,z-x).
\end{equation}
Write indeed, recalling that $u\in [t_{i+k},t_{i+k+1}] $: 
\begin{eqnarray}
p_\varepsilon(t_i,u,x,z)-p_{\varepsilon}(t_i,t_{i+k},x,z)=\tilde p_\varepsilon(t_i,u,x,z)-\tilde p_{\varepsilon}(t_i,t_{i+k},x,z)\nonumber\\
+ \sum_{l\ge 1}\int_{t_i}^{t_{i+k}}ds \int_{\R^d} \tilde p_\varepsilon\otimes H_\varepsilon^{(l-1)}(t_i,s,x,w) \big(H_\varepsilon(s,u,w,z )-H_\varepsilon(s,t_{i+k},w,z ) \big)dw\nonumber\\
+ \sum_{l\ge 1}\int_{t_{i+k}}^{u}ds\int_{\R^d}\tilde p_\varepsilon(t_i,s,x,w)H_\varepsilon^{(l)}(s,u,w,z )dw=:(T_{1,\varepsilon}+T_{2,\varepsilon}+T_{3,\varepsilon})(t_i,t_{i+k},u ,x,z), \label{PREAL_HOLDER_BOUND_HK_T}
\end{eqnarray}
with the convention $\tilde p_\varepsilon\otimes H_\varepsilon^{(0)}=\tilde p_\varepsilon $. Since $\tilde p_\varepsilon $ is a Gaussian non-degenerate kernel, and that for $k\ge 1$ we readily get from the mean value theorem:
\begin{equation}
\label{CRT_T1}
|T_{1,\varepsilon}(t_i,t_{i+k},u,x,z)|\le \frac{C(u-t_{i+k})}{(u-t_i)}p_c(u-t_i,z-x)\le \frac{C(u-t_{i+k})^{(\gamma-\eta)/2}}{(u-t_i)^{\gamma/2}}p_c(u-t_i,z-x).
\end{equation}
Also, from the definition of $\Phi_\varepsilon $ following \eqref{5} and the associated control \eqref{BOUND_PHI}, we get:
\begin{equation}
\label{CRT_T3}
\begin{split}
|T_{3,\varepsilon}(t_i,t_{i+k},u,x,z)|&\le \int_{t_{i+k}}^u ds\int_{\R^d}\tilde p_\varepsilon(t_i,s,x,w) |\Phi_\varepsilon(s,u,w,z)| dw\le Cp_c(u-t_i,z-x)\int_{t_{i+k}}^u \frac{ds}{(u-s)^{1-\gamma/2}}\\
&\le C(u-t_{i+k})^{\gamma/2}p_c(u-t_i,z-x).
\end{split}
\end{equation}
For $T_{2,\varepsilon}$, we again use some splitting in time. Write $T_{1,\varepsilon}(t_i,t_{i+k},u,x,z)=(T_{21,\varepsilon}+T_{22,\varepsilon})(t_i,t_{i+k},u,x,z) $ where:
\begin{eqnarray}
T_{21,\varepsilon}(t_i,t_{i+k},u,x,z)&:=&\int_{t_{i+k}-(u-t_{i+k})}^{t_{i+k}}ds \int_{\R^d}\tilde p_\varepsilon (t_i,s,x,w)(\Phi_\varepsilon(s,u,w,z)-\Phi_\varepsilon(s,t_{i+k},w,z))dw,\nonumber \\
|T_{21,\varepsilon}(t_i,t_{i+k},u,x,z)|&\le& \int_{t_{i+k}-(u-t_{i+k})}^{t_{i+k}}ds\Big( \frac{p_c(u-t_i,z-x)}{(u-s)^{1-\gamma/2}}+\frac{p_c(t_{i+k}-t_i,z-x)}{(t_{i+k}-s)^{1-\gamma/2}} \Big)\nonumber\\
&\le& C (u-t_{i+k})^{\gamma/2}p_c(u-t_i,z-x),\label{CRT_T_21}
\end{eqnarray}
recalling that, since $k\ge 1 $, $(u-t_i)\ge t_{i+k}-t_i\ge \frac 12 (u-t_i) $ for the last inequality.
 For
\begin{equation*}
T_{22,\varepsilon}(t_i,t_{i+k},u,x,z):= \sum_{l\ge 1}\int_{t_i}^{t_{i+k}-(u-t_{i+k})}ds \int_{\R^d} \tilde p_\varepsilon\otimes H_\varepsilon^{(l-1)}(t_i,s,x,w) \big(H_\varepsilon(s,u,w,z )-H_\varepsilon(s,t_{i+k},w,z ) \big)dw,
 \end{equation*}
we focus on the second order terms in the difference $\big(H_\varepsilon(s,u,w,z )-H_\varepsilon(s,t_{i+k},w,z ) \big) $. They are indeed the most singular. Note that on the considered time set $\frac 12(u-s)\le (t_{i+k}-s) $.
We thus get (with similar arguments than those used to handle $T_{1,\varepsilon} $):
\begin{eqnarray*}
\Big|\Tr\big((a(s,w)-a(s,z))(D_w^2 \tilde p_\varepsilon(s,u,w,z)-D_w^2 \tilde p_\varepsilon(s,t_{i+k},w,z)) \big) \Big|\le C\frac{|z-w|^{\gamma}(u-t_{i+k})}{(u-s)^2}p_c(u-s,z-w)\\
\le C\Big(\frac{|z-w|}{(u-s)^{1/2}} \Big)^\gamma \frac{(u-t_{i+k})^{(\gamma-\eta)/2}}{(u-s)^{1-\eta/2}}p_c(u-s,z-w)\le \frac{(u-t_{i+k})^{(\gamma-\eta)/2}}{(u-s)^{1-\eta/2}}p_c(u-s,z-w)
\end{eqnarray*}

The small loss on the time H\"older regularity index is here due to the fact we consider the forward time component and the $\eta $ is needed to integrate. We obtain:
\begin{eqnarray*}
|T_{22,\varepsilon}(t_i,t_{i+k},u,x,z)|\le \frac{C(u-t_{i+k})^{(\gamma-\eta)/2}}{\eta}p_c(u-t_i,z-x).
\end{eqnarray*}
Plugging this last bound and the controls of \eqref{CRT_T_21}, \eqref{CRT_T3}, \eqref{CRT_T1} into \eqref{PREAL_HOLDER_BOUND_HK_T} yields \eqref{HOLD_TEMPS}.

Now, from \eqref{K0} and using \eqref{8} and \eqref{HOLD_TEMPS} in \eqref{DECOUP_INT_DER}, we get:
\begin{equation}
\label{CTR_D_1_N}
|D_{\varepsilon}^{d,1}|(t_i,t_j,x,y)\le C_\eta
h^{(\gamma-\eta)/2}
p_c(t_j-t_i,y-x).
\end{equation}
\item[-] Under \A{A${}_{PS}$}, since we want to get higher convergence rates, we need to use the the forward Kolmogorov equation in $D_{\varepsilon}^{d,1}$. Write for $k\in \leftB 1,j-i-2\rightB,\ u\in [t_k,t_{k+1}] $:
\begin{eqnarray*}
p_{\varepsilon}(t_i,u,x,z)-p_{\varepsilon}(t_i,t_{i+k},x,z)=(u-t_{i+k}) \int_0^1 \big(\partial_v p_{\varepsilon}(t_i,v,x,z)\big)_{v=t_{i+k}+\lambda (u-t_{i+k})}d\lambda\\
=(u-t_{i+k}) \int_0^1 \big( (L_{v}^{\varepsilon})^* p_{\varepsilon}(t_i,v,x,z)\big)_{v=t_{i+k}+\lambda (u-t_{i+k})}d\lambda.
\end{eqnarray*}
\item[$\diamond $] If now $\sigma(t,x)=\sigma $, the term $ H_\varepsilon$ in \eqref{DECOUP_INT_DER} only involves a first order derivative.  We obtain:
\begin{eqnarray}
|D_{\varepsilon}^{d,1}|(t_i,t_j,x,y)&\le& Ch\Big( \sum_{k=1}^{ j-i-2}\int_0^1d\lambda \int_{t_{i+k}}^{t_{i+k+1}}du\int_{\R^d}\Big[|{\rm div}_z\big(b_\varepsilon(v,z) p_\varepsilon(t_i,v,x,z)\big)|\frac{1}{(t_j-u)^{1/2}}p_c(t_j-u,y-z)\nonumber\\
&&+  \frac 12 \big|\langle a \nabla_zp_\varepsilon(t_i,v,x,z), \nabla_z  \langle b_\varepsilon(u,z), \nabla_z \tilde p_\varepsilon(u,t_j,z,y)\rangle \rangle \big|\Big]_{v=t_{i+k}+\lambda (u-t_{i+k})} dz\Big)\nonumber\\
&&+C \int_{[t_{j-1},t_j]} du \int_{\R^d} |p_\varepsilon(t_i,u,x,z)-p_\varepsilon(t_i,t_{j-1},x,z)| \frac{1}{(t_j-u)^{1/2}}p_c(t_j-u,y-z)dz\nonumber\\
&&+C\frac{h}{(t_j-t_i)^{1/2}}p_c(t_j-t_i,y-x). 
\label{PREAL_APS}
\end{eqnarray}
From the parametrix representation \eqref{SERIE_P_EPS} of $p_\varepsilon$, it is again easily deduced similarly to \eqref{HOLD_TEMPS} that for any $\eta\in (0,1/2),\ u\in [t_{j-1},t_j] $:
$$|p_\varepsilon(t_i,u,x,z)-p_\varepsilon(t_i,t_{j-1},x,z)|\le \frac{C}{\eta}\frac{|u-t_{j-1}|^{(1-\eta)/2}}{(u-t_i)^{1/2}}p_c(u-t_i,z-x).$$
 Plugging this estimate in \eqref{PREAL_APS} and using as well \eqref{THE_DER_BEPS_BETA} and \eqref{CTR_DER_PERT_PS} yields for all $\eta\in (0,1/2) $:
\begin{eqnarray}
&&|D_{\varepsilon}^{d,1}|(t_i,t_j,x,y)\nonumber\\
&\le& C_\eta h\Big(\sum_{k=1}^{j-i-2}\int_0^1 d\lambda\int_{t_{i+k}}^{t_{i+k+1}}du \int_{\R^d}\Big[ \Big(\varepsilon^{-1}\I_{z\in{\mathcal V}_\varepsilon(\mathcal I)}+\frac{\varepsilon^{-\eta}
}{(v-t_i)^{1/2}}\Big)p_c(v-t_i,z-x)\frac{p_c(t_j-u,y-z)}{(t_j-u)^{1/2}}\nonumber\\
&&+\varepsilon^{-\eta}\frac{p_c(v-t_i,z-x)}{(v-t_i)^{1/2}}\Big(\frac{\varepsilon^{-1}}{(t_j-u)^{1/2}}\I_{z\in{\mathcal V}_\varepsilon(\mathcal I)}+\frac{1
}{(t_j-u)}\Big)p_c(t_j-u,y-z)\Big]_{v=t_{i+k}+\lambda (u-t_{i+k})} dz\Big)
\nonumber\\
&&+\frac{Ch^{1-\eta/2}}{\eta(t_j-t_i)^{1/2}}p_c(t_j-t_i,y-x)\nonumber\\
&\le & C_\eta \Big(\frac{h\varepsilon^{-(1+\eta)+1/q}}{\alpha(q)(t_j-t_i)^{1/2-\alpha(q)}}+\frac{h|\ln(h)|\varepsilon^{-\eta}}{(t_j-t_i)^{1/2}}+\frac{h^{1-\eta/2}}{\eta(t_j-t_i)^{1/2}}\Big)p_c(t_j-t_i,y-x)\nonumber\\
&\le& \bar C_{\eta,q}\Big( \frac{h\varepsilon^{-(1+\eta)+1/q}}{(t_j-t_i)^{1/2-\alpha(q)}}+\frac{h|\ln(h)|\varepsilon^{-\eta}}{(t_j-t_i)^{1/2}}+\frac{h^{1-\eta/2}}{(t_j-t_i)^{1/2}}\Big)p_c(t_j-t_i,y-x),\label{APS_SIG_CONST}
\end{eqnarray}
recalling as well for the previous computations that $u-v= (1-\lambda)(u-t_{i+k})\le h$.
\item[$\diamond $] For a general $\sigma $, an additional term appears in \eqref{PREAL_APS}, which corresponds to the second order terms in $H_\varepsilon$ for the indexes $k\in \leftB 1,j-i-2\rightB $. The other contributions are controlled similarly.
We have to bound:
\begin{eqnarray*}
Ch \sum_{k=1}^{ j-i-2}\int_0^1d\lambda \int_{t_{i+k}}^{t_{i+k+1}}du\int_{\R^d} \Big[
\big|\sum_{l,m=1}^d D_{z_lz_m}^2 \big(a_{lm}(t_i,z) p_\varepsilon(t_i,v,x,z)\big)  \\
 \Tr\big( (a_\varepsilon(u,z)-a_\varepsilon(u,y)) D_z^2 \tilde p_\varepsilon(u,t_j,z,y)\big) \big|\Big]_{v=t_{i+k}+\lambda (u-t_{i+k})} dz\\
 \le \bar C_{\eta,q} h \sum_{k=1}^{ j-i-2}\int_0^1d\lambda \int_{t_{i+k}}^{t_{i+k+1}}du\int_{\R^d} \frac{\varepsilon^{-(1+\eta)}}{u-t_i} p_c(u-t_i,z-x)\frac{1}{(t_j-u)^{1/2}}p_x(t_j-u,y-z)dz
 \le \bar C_{\eta,q} \frac{h |\ln(h)|\varepsilon^{-(1+\eta)}}{(t_j-t_i)^{1/2}}.
\end{eqnarray*}
This yields in the considered case:
\begin{eqnarray}
|D_{\varepsilon}^{d,1}|(t_i,t_j,x,y)\le \bar C_{\eta,q}\Big( \frac{h|\ln(h)|\varepsilon^{-(1+\eta)}}{(t_j-t_i)^{1/2}}+\frac{h^{1-\eta/2}}{(t_j-t_i)^{1/2}}\Big)p_c(t_j-t_i,y-x).\label{APS_SIG_GEN}
\end{eqnarray}
\end{trivlist}

\item[$\bullet$]To control the term $D_\varepsilon^{d,2} $ appearing in \eqref{DECOUP_INT_DER}, let us first introduce:
\begin{eqnarray*}
(\bar D_{\varepsilon,\sigma}^{d,21}+\bar D_{\varepsilon,\sigma}^{d,22})(t_i,t_{i+k},u,t_j,x,y):=\\
C \int_{\R^d} p_c(t_{i+k}-t_i,z-x)|a_{\varepsilon}(u,z)-a_{\varepsilon}(u,y)-( a_{\varepsilon}(t_{i+k},z)-a_{\varepsilon}(t_{i+k},y))|
\frac{1}{t_j-t_{i+k}}p_c(t_j-t_{i+k},y-z)dz\\
+\Big|\int_{\R^d} p_{\varepsilon}(t_i,t_{i+k},x,z) \Tr\Big((a_{\varepsilon}(u,z)-a_{\varepsilon}(u,y))[D_z^2 \tilde p_{\varepsilon}(u,t_j,z,y)-D_z^2 \tilde p_{\varepsilon}(t_{i+k},t_j,z,y)]\Big)\Big|dz,\\
\end{eqnarray*}
that correspond to the most singular contributions in $D_{\varepsilon}^{d,2} $ as far as the time singularity is concerned when the diffusion coefficient varies, i.e. $ \sigma(t,x)\neq \sigma$. 
\item[-] Under \A{A${}_H $}.
For $\bar D_{\varepsilon,\sigma}^{d,22}$ we can exploit the H\"older continuity in time of the Gaussian kernel $\tilde p_\varepsilon $ to derive, similarly to the computations performed above to investigate $T_{22,\varepsilon}$, that for all $\eta\in (0,\gamma) $, $k\in \leftB 0,j-i-2\rightB,\ u\in [t_{i+k},t_{i+k+1}]$ :
$$ |D_z^2 \tilde p_{\varepsilon}(u,t_j,z,y)-D_z^2 \tilde p_{\varepsilon}(t_{i+k},t_j,z,y)|\le C\frac{(u-t_{i+k})^{(\gamma-\eta)/2}}{(t_j-u)^{1+(\gamma-\eta)/2}}p_c(t_j-t_{i+k},y-z).$$
From the spatial H\"older continuity of $a_\varepsilon(u,\cdot)$, we get:
\begin{equation}
\label{DND22}
\sum_{k=0}^{j-i-2}\int_{t_{i+k}}^{t_{i+k+1}} du |\bar D_{\varepsilon,\sigma}^{d,22}|(t_i,t_{i+k},u,t_j,x,y)\le \frac{C}{\eta}h^{(\gamma-\eta)/2}p_c(t_j-t_i,y-x).
\end{equation}
On the other hand, for $k=j-i-1 $, \eqref{8} readily yields:
\begin{equation}
\label{AH_DND22_LS}
\begin{split}
\int_{t_{j-1}}^{t_{j}} du |\bar D_{\varepsilon,\sigma}^{d,22}|(t_i,t_{i+k},u,t_j,x,y)\le C\int_{t_{j-1}}^{t_j}du\int_{\R^d}p_c(t_{j-1}-t_i,z-x)\Big(\frac{p_c(t_j-u,y-z)}{(t_j-u)^{1-\gamma/2}}+\frac{p_c(t_j-t_{j-1},y-z)}{(t_j-t_{j-1})^{1-\gamma/2}}\Big)dz \\
\le Ch^{\gamma/2}p_c(t_j-t_i,y-x).
\end{split}
\end{equation}

Also, using the uniform $\gamma/2 $-H\"older continuity in time of $a$ we get:
\begin{eqnarray*}
|\bar D_{\varepsilon,\sigma}^{d,21} (t_i,t_{i+k},u,t_j,x,y)|\\
\le C \int_{\R^d} p_c(t_{i+k}-t_i,z-x)|u-t_{i+k}|^{\gamma/2} \frac{1}{t_j-t_{i+k}}p_c(t_j-t_{i+k},y-z)dz\\
\le Ch^{(\gamma-\eta)/2} p_c(t_j-t_i,y-x)(t_j-t_{i+k})^{-1+\eta/2},
\end{eqnarray*}
for $\eta\in (0,\gamma) $, recalling $u\in [t_{i+k},t_{i+k+1}] $ for the last inequality.
The difference of the first order terms appearing in $D_{\varepsilon}^{d,2}$ in \eqref{DECOUP_INT_DER} yields similar controls. From the above bound, \eqref{DND22} and \eqref{AH_DND22_LS}, we derive that under \A{A${}_H$}:
\begin{equation}
\label{CTR_D_2_N}
|D_\varepsilon^{d,2}(t_i,t_j,x,y)|\le C_\eta h^{(\gamma-\eta)/2}p_c(t_j-t_i,y-x).
\end{equation}

\item[-] Under \A{A${}_{PS}$}, write:
\begin{eqnarray}
\label{DND22_H}
\sum_{k=0}^{j-i-1}\int_{t_{i+k}}^{t_{i+k+1}} du |\bar D_{\varepsilon,\sigma}^{d,22}|(t_{i},t_{i+k},u,t_j,x,y)\nonumber\\
\le C\Big( h\sum_{k=0}^{\lceil \frac{j-i-1}2\rceil}\int_{t_{i+k}}^{t_{i+k+1}} du \int_{\R^d}p_c(t_{i+k}-t_i,z-x)\frac{1}{(t_j-t_i)^{3/2}}p_c(t_j-u,y-z)dz\nonumber\\
+\sum_{k=\lceil \frac{j-i-1}2\rceil+1}^{j-i-1}\int_{t_{i+k}}^{t_{i+k+1}}du \int_{\R^d}\sum_{l,m=1}^d \Big|D_{z_lz_m}^2\Big(p_\varepsilon(t_i,t_{i+k},x,z)(a_{\varepsilon}^{lm}(u,z)-a_{\varepsilon}^{lm}(u,y))\Big)\Big|\nonumber\\
\times |\tilde p_\varepsilon(u,t_j,z,y)-\tilde p_\varepsilon(t_{i+k},t_j,z,y)|dz \Big)\le \bar C_{\eta,q}\Big(\frac{h}{(t_j-t_i)^{1/2}}+\frac{h^{1-\eta/2}\varepsilon^{-(1+\eta)}}{
(t_j-t_i)^{1/2-\eta/2}}\Big) p_c(t_j-t_i,y-x),
\end{eqnarray}
where to derive the last inequality, we exploit \eqref{CTR_DER_PERT_PS}, the time H\"older continuity of the Gaussian density $\tilde p_\varepsilon$ for $k\in \leftB \lceil \frac{j-i-1}{2}\rceil+1,j-i-2\rightB $ and direct computations for $k=j-i-1$.
Also, the smoothness in time (Lipschitz continuity) of the diffusion coefficients gives
for $\eta\in (0,1] $,
\begin{equation}
\label{CTR_D_EPS_SIG_21}
|\bar D_{\varepsilon,\sigma}^{d,21} (t_i,t_{i+k},u,t_j,x,y)|\le Ch^{1-\eta/2} p_c(t_j-t_i,y-x)(t_j-t_{i+k})^{-1+\eta/2}.
\end{equation}

 Let us now carefully mention that, under \A{A${}_{PS} $}, because of the irregularity of the drift, it is very important as well to establish cautiously the bounds for the difference of the first order terms. Introduce:
\begin{eqnarray}
\label{DEPS_B}
(\bar D_{\varepsilon,b}^{d,21}+\bar D_{\varepsilon,b}^{d,22})(t_i,t_{i+k},u,t_j,x,y):=\nonumber\\
C \int_{\R^d} p_c(t_{i+k}-t_i,z-x)|b_{\varepsilon}(u,z)- b_{\varepsilon}(t_{i+k},z)|
\frac{1}{(t_j-t_{i+k})^{1/2}}p_c(t_j-t_{i+k},y-z)dz\nonumber\\
+\Big|\int_{\R^d} p_{\varepsilon}(t_i,t_{i+k},x,z) \langle b_{\varepsilon}(u,z),D_z \tilde p_{\varepsilon}(u,t_j,z,y)-D_z \tilde p_{\varepsilon}(t_{i+k},t_j,z,y)\rangle\Big|dz.
\end{eqnarray}
From the Lipschitz property in time of $b_\varepsilon(\cdot,z) $ we readily get:
\begin{equation}
\label{D21_APS_CONST}
\bar D_{\varepsilon,b}^{d,21}(t_i,t_{i+k},u,t_j,x,y)
\le \frac{Ch}{(t_j-t_{i+k})^{1/2}}p_c(t_j-t_i,y-x).
\end{equation}
Also, recalling that $\partial_u \tilde p_\varepsilon (u,t_j,z,y)+\frac 12 {\rm Tr}\big( a(u,y) D_z^2 \tilde p_\varepsilon(u,t_j,z,y)\big)=0$, one readily gets:
\begin{equation}
\label{D22_APS_CONST_LOIN}
\bar D_{\varepsilon,b}^{d,22}(t_i,t_{i+k},u,t_j,x,y)\le \frac{Ch}{(t_j-u)^{3/2}}p_c(t_j-t_i,y-x),
\end{equation}
which once integrated in time gives the expected control for $k\in \leftB 0, \lceil \frac{j-i-1}{2}\rceil\rightB $. The indexes $k\in \leftB\lceil \frac{j-i-1}{2}\rceil+1,j-i-1\rightB $ require a more careful treatment. 
Now, for such indexes 
and 
$u\in [t_{i+k},t_{i+k+1}] $, using again the Kolmogorov equation satisfied by $\tilde p_\varepsilon $ and two spatial integration by parts in $z$, one obtains from \eqref{CTR_DER_PERT_PS} the following \textit{global} control:
\begin{eqnarray}
\label{CTR_DRIFT_GLOB}
&&\bar D_{\varepsilon,b}^{d,22}(t_i,t_{i+k},u,t_j,x,y)\nonumber\\
&\le& C (u-t_{i+k})\int_0^1 d\lambda \sum_{l,m,q\in \leftB 1,d\rightB}\int_{\R^d} \Big[\big|D_{z_lz_m}^2\big(p_\varepsilon(t_i,t_{i+k},x,z)b_\varepsilon^l(u,z)\big)\big| |D_{z_q} \tilde p_{\varepsilon}(v,t_j,z,y)|\Big]_{v=t_{i+k}+\lambda(u-t_{i+k})}dz\nonumber\\
&\le& Ch\int_{0}^1d\lambda\int_{\R^d}\Big( \frac{\varepsilon^{-(1+\eta)}}{(t_{i+k}-t_i)} +\varepsilon^{-2}\I_{z\in {\mathcal V}_\varepsilon({\mathcal I}) }\Big)p_c(t_{i+k}-t_i,z-x)\frac{p_c(t_j-v,y-z)}{(t_j-u)^{1/2}}\Big|_{v=t_{i+k}+\lambda(u-t_{i+k})} dz\nonumber\\
&\le& Ch p_c(t_j-t_i,y-x)\Big(\frac{\varepsilon^{-(1+\eta)}}{(t_j-t_i)(t_j-u)^{1/2}}+\frac{\varepsilon^{-2+ 1/q}}{(t_j-u)^{1/2+d/(2q)}}\Big), q>d.
\end{eqnarray}
Plugging \eqref{CTR_DRIFT_GLOB}, \eqref{D22_APS_CONST_LOIN} and \eqref{D21_APS_CONST} into \eqref{DEPS_B} one derives:
\begin{equation}
\label{BORNE_DRIFT_APS}
\sum_{k=0}^{j-i-1}\int_{t_{i+k}}^{t_{i+k+1}} \!\!\!\!\!\!du \Big(\bar D_{\varepsilon,b}^{d,21}(t_i,t_{i+k},u,t_j,x,y)+\bar D_{\varepsilon,b}^{d,22}(t_i,t_{i+k},u,t_j,x,y)\Big)\le Ch\Big(\frac{\varepsilon^{-(1+\eta)}}{(t_j-t_i)^{1/2}} +\varepsilon^{-2+1/q}\Big) p_c(t_j-t_i,y-x).
\end{equation}

We carefully, point out that, since $q>d$, this term will dominate the error associated with the time discretization when compared to \eqref{DND22_H}.

We will now improve this bound using the (unsigned) distance of the final point to the neighborhood of the discontinuity sets $d(y,{\mathcal V}_\varepsilon({\mathcal I}))$.  We cannot hope to improve the control \eqref{D22_APS_CONST_LOIN} for $k\in \leftB0, \lceil \frac{j-i-1}2\rceil\rightB$ and therefore focus on the indexes $k\in \leftB \lceil \frac{j-i-1}2\rceil+1, j-i-1\rightB $.
For those indexes, performing one spatial integration by part in $z$ from \eqref{DEPS_B} yields:
\begin{eqnarray*}
\sum_{k=\lceil \frac{j-i-1}2\rceil+1}^{j-i-1}\int_{t_{i+k}}^{t_{i+k+1}} du \bar D_{\varepsilon,b}^{d,22}(t_i,t_{i+k},u,t_j,x,y)\\
\le C\!\!\!\sum_{k=\lceil  \frac{j-i-1}2\rceil+1}^{j-i-1}\int_{t_{i+k}}^{t_{i+k+1}} \!\!\!\!\! \! du \Big( 
\int_{\R^d}\Big(\frac{\varepsilon^{-\eta}}{(t_{i+k}-t_i)^{1/2}}+\varepsilon^{-1}\I_{z\in {\mathcal V}_\varepsilon({\mathcal I})}\Big)p_c(t_{i+k}-t_i,z-x)
|\tilde p_{\varepsilon}(u,t_j,z,y)-\tilde p_{\varepsilon}(t_{i+k},t_j,z,y)|\Big)dz\\
\le \frac{C}{\eta}
\Big(\frac{h^{1-\eta/2}\varepsilon^{-\eta}}{(t_{j}-t_i)^{(1-\eta)/2}}
p_c(t_j-t_i,y-x)+\bar R_\varepsilon^{d,22}(t_i,t_j,x,y) \Big),
\end{eqnarray*}
using the H\"older continuity in time of $\tilde p_\varepsilon $ for $k\in \leftB \lceil \frac{j-i-1}2\rceil+1,j-i-2\rightB $ and direct computations for $k=j-i-1$ in  the second inequality and where 
\begin{eqnarray*}
\bar R_\varepsilon^{d,22}(t_i,t_j,x,y)&:=&h^{1-\eta/2}\sum_{k=\lceil \frac{j-i-1}2\rceil+1}^{j-i-2}\int_{t_{i+k}}^{t_{i+k+1}} \frac{du}{(t_j-u)^{1-\eta/2}}\varepsilon^{-1}\int_{\R^d}p_c(u-t_i,z-x)\I_{z\in {\mathcal V}_{\varepsilon}({\mathcal I})}p_c(t_j-u,y-z)dz\\
&&+ \varepsilon^{-1}\int_{t_{j-1}}^{t_j}du \int_{\R^d}p_c(u-t_i,z-x)\I_{z\in {\mathcal V}_{\varepsilon}({\mathcal I})}\big(p_c(t_j-u,y-z)+p_c(t_j-t_{j-1},y-z)\big)dz.
\end{eqnarray*}
Since $|y-z|+|z-x|\ge |y-x| $ and $(u-t_i)\ge \frac 12 (t_j-t_i) $ we get that up to a modification of $c$ that for $k\in \leftB \lceil \frac{j-i-1}2\rceil+1,j-i-1\rightB $ and $u\in [t_{i+k},t_{i+k+1}],s=u $ or $u\in [t_{j-1},t_j]$, $s=t_{j-1}$:
\begin{eqnarray}
I_\varepsilon(t_i,t_{i+k},u,s,t_j,x,y)&:=&\varepsilon^{-1}\int_{\R^d}p_c(u-t_i,z-x)\I_{z\in {\mathcal V}_{\varepsilon}({\mathcal I})}p_c(t_j-s,y-z)dz\nonumber\\
&\le& C\varepsilon^{-1}
p_c(t_j-t_i,y-x)\int_{\R^d}
\I_{z\in {\mathcal V}_{\varepsilon}({\mathcal I})}p_c(t_j-s,y-z)dz.\label{SORTIE_DENS}
\end{eqnarray}
Indeed, either $|z-x|\ge \frac 12 |y-x| $ and in that case $p_c(u-t_i,z-x)\le Cp_c(t_j-t_i,y-x) $, or $|y-z|\ge \frac 12|y-x| $. In that case we use that $p_c(u-t_i,z-x)\le C/(t_j-t_i)^{d/2} $ and write as well:
 \begin{eqnarray*}
 \exp\Big(-\frac c2\frac{|y-z|^2}{t_j-s}\Big)\le \exp\Big(-\frac{c}{16}\frac{|y-x|^2}{(t_j-s)}\Big)\exp\Big(-\frac{c}4\frac{|y-z|^2}{(t_j-s)}\Big)\le \exp\Big(-\frac{c}{16}\frac{|y-x|^2}{(t_j-t_i)}\Big)\exp\Big(-\frac{c}4\frac{|y-z|^2}{(t_j-s)}\Big),
 \end{eqnarray*}
which also gives \eqref{SORTIE_DENS} modifying $c$.

Up to a change of coordinate, in order to straighten the boundary, we can write (following the arguments of Section \ref{SUBSEC_HODLER_BOREL} that led to \eqref{CTR_DIST_DENS}):
\begin{eqnarray*}
I_\varepsilon(t_i,t_{i+k},u,s,t_j,x,y)&\le& C\varepsilon^{-1}p_c(t_j-t_i,y-x)\int_{-\varepsilon}^{\varepsilon} \exp\Big(-\frac{|\bar z-d_S(y,{\mathcal V}_\varepsilon(\mathcal I))|^2}{2(t_j-s)}\Big)\frac{d\bar z}{(t_j-s)^{1/2}}\\
&\le& C\varepsilon^{-1}p_c(t_j-t_i,y-x)\int_{-\varepsilon}^{\varepsilon} \frac{d\bar z}{|\bar z-d_S(y,{\mathcal V}_\varepsilon(\mathcal I))|},
\end{eqnarray*}
where $d_S(y,{\mathcal V}_\varepsilon(\mathcal I))$ stands for the \textbf{signed} distance\footnote{Since the discontinuity sets are bounded, we can for instance choose the distance to be positive for the points inside the bounded domain associated with the boundary. Anyhow, this choice plays no role here.} of $y$ to the boundary of ${\mathcal V}_\varepsilon(\mathcal I) $. Since we have assumed that for this part of the Proposition that $|d_S(y,{\mathcal V}_\varepsilon(\mathcal I))|\ge 2\varepsilon $ we get 
$$|\bar z-d_S(y,{\mathcal V}_\varepsilon(\mathcal I))|\ge |d_S(y,{\mathcal V}_\varepsilon(\mathcal I)|-|\bar z|\ge |d_S(y,{\mathcal V}_\varepsilon(\mathcal I)|-\varepsilon \ge \frac{|d_S(y,{\mathcal V}_\varepsilon(\mathcal I))|}2=:\frac{d(y,{\mathcal V}_\varepsilon(\mathcal I))}2,$$ 
where $d(y,{\mathcal V}_\varepsilon(\mathcal I))$ is the \textbf{unsigned} distance of $y$ to the boundary of ${\mathcal V}_\varepsilon(\mathcal I) $.
We finally derive from the above computations \eqref{D21_APS_CONST} and \eqref{D22_APS_CONST_LOIN}:
\begin{equation}
\label{BD_FINAL_APS_DIST}
\sum_{k=0}^{j-i-1} \int_{t_{i+k}}^{t_{i+k+1}} du (\bar D_{\varepsilon,b}^{d,21}+\bar D_{\varepsilon,b}^{d,22})(t_i,t_{i+k},u,t_j,x,y)\le  C_\eta h^{1-\eta/2}\Big(\frac{\varepsilon^{-\eta}}{(t_{j}-t_i)^{1/2}}
+\frac{1}{d(y,{\mathcal V}_\varepsilon(\mathcal I))} \Big)p_c(t_j-t_i,y-x).
\end{equation}

\item[$\bullet $] Final derivation of the bounds.

Recall first that:
\begin{eqnarray*}
|p_\varepsilon-p_\varepsilon^d|(t_i,t_j,x,y)&\le& \sum_{r\ge 0}|(p_\varepsilon\otimes H_\varepsilon-p_\varepsilon\otimes_h H_\varepsilon)| \otimes_h |H_\varepsilon^{(r)}|(t_i,t_j,x,y).
\end{eqnarray*}
\item[-] Under \A{A${}_H$}, 
we first plug \eqref{CTR_D_2_N}, \eqref{CTR_D_1_N} into \eqref{DECOUP_INT_DER}. The bound \eqref{CTR_PN_PND} of the proposition then follows from the above inequality using \eqref{CTR_H_ITER}.

\item[-] Under \A{A${}_{PS} $}.

\item[$\diamond $] For a general $\sigma(t,x) $ (which varies), we derive from \eqref{APS_SIG_GEN}, \eqref{DND22_H}, \eqref{CTR_D_EPS_SIG_21}, \eqref{BORNE_DRIFT_APS} and \eqref{THE_DECOUP}, \eqref{CTR_H_ITER}:
\begin{eqnarray*}
|p_\varepsilon-p_\varepsilon^d|(t_i,t_j,x,y)&\le& \sum_{r\ge 0}|(p_\varepsilon\otimes H_\varepsilon-p_\varepsilon\otimes_h H_\varepsilon)| \otimes_h |H_\varepsilon^{(r)}|(t_i,t_j,x,y)\\
&\le& \bar C_{\eta,q}\Big( h|\ln(h)|\varepsilon^{-(1+\eta)}
+h^{1-\eta/2}\varepsilon^{-(1+\eta)}+h\varepsilon^{-2+1/q}\Big) p_c(t_j-t_i,y-x).
\end{eqnarray*}
Using \eqref{BD_FINAL_APS_DIST} instead of \eqref{BORNE_DRIFT_APS} when $d(y,{\mathcal V}_\varepsilon({\mathcal I}))\ge 2\varepsilon $ yields:
\begin{eqnarray*}
|p_\varepsilon-p_\varepsilon^d|(t_i,t_j,x,y)
&\le& \bar C_{\eta,q}\Big( h|\ln(h)|\varepsilon^{-(1+\eta)}
+h^{1-\eta/2}\varepsilon^{-(1+\eta)}+\frac{h^{1-\eta/2}}{d(y,{\mathcal V}_\varepsilon({\mathcal I}))}\Big) p_c(t_j-t_i,y-x).
\end{eqnarray*}

\item[$\diamond $] For $\sigma(t,x)=\sigma $ (fixed diffusion coefficient), when $d(y,{\mathcal V}_\varepsilon({\mathcal I}))\ge 2\varepsilon $, we derive from \eqref{APS_SIG_CONST}, \eqref{BD_FINAL_APS_DIST} that:
\begin{eqnarray*}
|p_\varepsilon-p_\varepsilon^d|(t_i,t_j,x,y)
&\le & \bar C_{\eta,q}\Big( h|\ln(h)|\varepsilon^{-\eta}
+h\varepsilon^{-(1+\eta)+1/q}+\frac{h^{1-\eta/2}}{d(y,{\mathcal V}_\varepsilon({\mathcal I}))}\Big) p_c(t_j-t_i,y-x).
\end{eqnarray*}
Observe that in this case the contribution $(\bar D_{\varepsilon,\sigma}^{d,2j})_{j\in \{1,2\}}$ vanish. The upper bound of \eqref{DND22_H} thus does not appear. This completes the proof.


\end{trivlist}

\section*{Acknowledgments}
The article was prepared within the framework of a subsidy granted to the HSE by the Government of the Russian Federation for the implementation of the Global Competitiveness Program.\\

We would like to thank the anonymous referees for helpful comments and suggestions.
\bibliographystyle{alpha}
\bibliography{bibli}

\end{document}